\documentclass[10pt, reqno]{amsart}
\usepackage{amsthm, amsmath, amsfonts, amssymb, enumerate, textcmds, tensor, enumitem, mathdots}
\usepackage{geometry}
\usepackage{pst-node}
\usepackage{tikz-cd} 
\usepackage{appendix}
\usepackage{graphicx,tikz}
\usepackage{nicefrac}
\usepackage{faktor}
\usepackage[colorlinks=true]{hyperref}
\usepackage{mathtools}

\hypersetup{
	colorlinks,
	citecolor=black,
	filecolor=black,
	linkcolor=black,
	urlcolor=black
}
\usepackage{cleveref}

\usepackage{physics}
\usepackage{enumerate}
\usepackage{pinlabel}
\usepackage[colorinlistoftodos]{todonotes}
\newcommand{\R}{\mathbb R}

\newcommand{\PSL}{\mathrm{PSL}}

\newtheorem{thm}{Theorem}[section]
\newtheorem{lem}[thm]{Lemma}

\newtheorem{prop}[thm]{Proposition}
\newtheorem{cor}[thm]{Corollary}

\newtheorem*{thma'}{Theorem A'}

\newtheorem*{thmb'}{Theorem B'}

\DeclareMathOperator{\C}{\mathbb{C}}

\DeclareMathOperator{\CP}{\mathbb{C}\mathbb{P}}

\DeclareMathOperator{\inners}{\langle \cdot, \cdot \rangle}
\DeclareMathOperator{\Hyp}{\mathbb H}

\newcommand{\ccpair}{(c_1, \overline{c_2})}

\newcommand{\hpair}{\vb*h{(c_1, \overline{c_2})}}

\DeclareMathOperator{\phee}{\varphi}
\DeclareMathOperator{\eps}{\varepsilon}

\newcommand{\redd}[1]{\textcolor{red}{#1}}

\newcommand{\putref}{\textcolor{red}{(PUT REF)}}
\newcommand{\inner}[1]{\langle{#1}\rangle}

\newcommand{\AF}{\mathrm{AF}}

\usepackage{physics}
\usepackage{soul}

\usepackage{scalerel}

\usepackage{stmaryrd}
\usepackage{upgreek}

\newcommand{\TSTS}{\mathcal T(S)\times \mathcal T(\overline S)}

\usepackage{mathrsfs}
\usepackage{soul}

\newcommand{\Is}{\mathtt I}
\newcommand{\II}{\mathtt {II}}
\newcommand{\QF}{\mathrm{QF}}

\theoremstyle{definition}

\newtheorem{defn}[thm]{Definition}
\newtheorem{remark}[thm]{Remark}

\makeatletter
\newcommand\longleftrightarrowfill@{%
  \arrowfill@\leftarrow\relbar\rightarrow}
\makeatother

\begin{document}
\title{Quadratic differentials in Teichmüller Theory and complex harmonic maps}

\author[Christian El Emam]{Christian El Emam}
\address{Christian El Emam: University of Torino, Dipartimento di Matematica ``Giuseppe Peano", Via Carlo Alberto, 10, 10123 Torino, Italy.} \email{christian.elemam@unito.it}

\author[Nathaniel Sagman]{Nathaniel Sagman}
\address{Nathaniel Sagman: University of North Carolina at Chapel Hill, 
3250 Phillips Hall $\#3$, Chapel Hill, NC 27599, United States.} \email{nsagman@unc.edu}
\begin{abstract}
We show that Hitchin–Wolf's harmonic map parametrization of Teichmüller space, the Schwarzian holonomy map, and Uhlenbeck’s almost-Fuchsian holonomy map all arise as restrictions of a single locally biholomorphic map into the $\textrm{PSL}(2,\C)$ character variety.
The map admits a right inverse defined on an open subset of quasi-Fuchsian space, encoding harmonic maps together with the Bers embedding. The construction goes through complex harmonic maps to the space of oriented geodesics of $\mathbb{H}^3$.

Along the way, we further develop the theory of complex harmonic maps, with a focus on the natural $\mathbb{C}^*$-action and the relation to complex projective structures. This leads to an independent application: we obtain a new geometric interpretation of certain real twistor lines in the rank-two Deligne–Hitchin moduli space and we recover a theorem of Gothen–Silva.
\end{abstract}
\maketitle

\section{Introduction}

\subsection{A unifying approach}

Let $S$ be a closed oriented surface of genus $\mathtt g\ge 2$. Owing to the uniformization theorem, the Teichmüller space $\mathcal T(S)$ is characterized as either the moduli space of marked complex structures or the moduli space of marked hyperbolic structures on $S$. Moreover, it identifies with a connected component of the space of conjugacy classes of discrete and faithful--henceforth Fuchsian--representations of $\pi_1(S)$ into $\mathrm{PSL}(2,\mathbb R)$; we denote this component by $\mathrm{Fuch}(S)$. 

The interplay between different models of Teichmüller space is one of the central aspects of its study, leading one to interact simultaneously with several subjects, including complex analysis, hyperbolic geometry, dynamics, and PDE. A recurring feature in this circle of ideas is that some relevant geometric objects in one way or another determine holomorphic quadratic differentials. In this paper we consider three such constructions, listed below. Let $c$ be a complex structure on $S$, let $\mathcal{K}_c$ be the canonical bundle of $(S,c)$, so that the space of holomorphic quadratic differentials is $H^0(\mathcal K_c^2)$, and let $\chi^{an}(\pi_1(S), \PSL(2,\C))$ be the smooth locus of the $\textrm{PSL}(2,\C)$ character variety. For the conjugate Riemann surface, we have the corresponding space $H^0(\mathcal{K}_{\overline{c}}^2)$.

\begin{itemize}
    \item Hitchin-Wolf's parametrization $\mathcal W\colon H^0(S,\mathcal K_c^2)\to \mathcal T(S)$, mapping $q\in H^0(\mathcal K_c^2)$ to the isotopy class of the unique hyperbolic structure $h_q$ such that the identity map $id\colon (S,c)\to (S, h_q)$ is harmonic with Hopf differential $q$. See \cite{Hitchin:1986vp}, \cite{W2}.
  \item The holomorphic Bers embedding $\mathcal{B}e\colon\mathcal{T}({S})\to H^0(\mathcal{K}_{\overline c}^2)$, whose image is a bounded domain. Similarly, one has the conjugate map $\overline{\mathcal{B}e}\colon\mathcal{T}(\overline{S})\to H^0(\mathcal{K}_{ c}^2)$. Viewing the inverse of $\overline{\mathcal{B}e}$ as a map to $\mathcal T(S)\times\{[\overline c]\}$, which we then realize as a subset of $\QF(S)$ using Bers' Simultaneous Uniformization Theorem, extends to the Schwarzian parametrization $\mathcal S\colon H^0(\mathcal K_c^2)\to \chi^{an}(\pi_1(S), \PSL(2,\mathbb C))$, which sends $q\in H^0(\mathcal K_c^2)$ to the holonomy of the unique complex projective structure with Schwarzian derivative $q$.
  \item Uhlenbeck's immersion $\mathcal {U}\colon \mathsf{Uh}(S,c)\to \chi^{an}(\pi_1(S), \PSL(2,\mathbb C))$, where $\mathsf{Uh}(S,c)\subset H^0(\mathcal K_c^2)$ is a particular open neighbourhood of $0$, assigning to $q\in \mathsf{Uh}(S,c)$ the holonomy of the unique almost Fuchsian minimal immersion with second fundamental form $Re(q)$; we denote the image by $\AF(S,c)$ (see Section \ref{subsubsec: almost fuchsian} and \cite{Uh}). Although Uhlenbeck's immersion isn't  strictly considered within Teichm{\"u}ller theory, as a standard character in the study of quasi-Fuchsian $3$-manifolds, it falls within the scope of the broader theory.
\end{itemize}

Although all three constructions involve holomorphic quadratic differentials, the 
question of whether they have a precise relationship has not been thoroughly explored. One link between Hitchin-Wolf's parametrization and the Schwarzian parametrization is Gaiotto's conformal limit (see \cite{Gai}, \cite{DFKMMN}, \cite{collier2024conformallimitsparabolicslnchiggs}), although the geometry of this procedure is at first glance a little obscure (more on this below). The main goal of this paper is to show that the three constructions fit into a single, unified picture, made precise in Theorem~\ref{thm: main} below.

\begin{thm}\label{thm: main}
Let $(S,c)$ be a closed Riemann surface of genus $\mathtt g\ge 2$. There exists an open connected subset $\Omega\subset H^0(\mathcal K_c^2)\times H^0(\mathcal K_{\overline c}^2)$ with 
\[
\Omega\supset \Delta \ \ \cup\  \ H^0(\mathcal K_c^2)\times \{0\}\ \ \cup\  \ \{0\}\times H^0(\mathcal K_{\overline c}^2)\]
and invariant under the $\C^*$-action $\zeta\cdot (q_1,\overline{q_2})=(\zeta^2 q_1,\zeta^{-2}\overline{q_2})$, together with a unique local biholomorphism  $$\Phi: \Omega \to \chi^{an}(\pi_1(S),\PSL(2,\C))$$ satisfying the following.  
    \begin{enumerate}
        \item Restricted to $\Delta$, $\Phi(q,\overline{q})=\mathcal {W}(q)$, where $\mathcal W$ is Hitchin-Wolf's parametrization of Teichmuller space, the latter identified with $\mathrm{Fuch}(S)$.
        \item Restricted to $ H^0(\mathcal K_c^2)\times \{0\}$, $\Phi(q,0)=\mathcal S(-\frac 1 2q)$, where $\mathcal S$ is the Schwarzian parametrization, and similar on $\{0\}\times H^0(\mathcal K_{\overline c}^2).$
        \item $\Omega$ contains the image of $\mathsf{Uh}(S,c)$ under the map $q\mapsto (q,-\overline{q})$. On this image, $\Phi(q,-\overline{q})=\mathcal{U}(q)$, where $\mathcal{U}$ is Uhlenbeck's immersion.
    \end{enumerate}
\end{thm}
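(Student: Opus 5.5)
The plan is to realize $\Phi$ through complex harmonic maps into the space of oriented geodesics $\mathbb G=\CP^1\times\CP^1\setminus\Delta_{\CP^1}$ of $\Hyp^3$. This space carries the holomorphic Riemannian metric whose real slices are $\Hyp^2$, and which is $\PSL(2,\C)$-invariant.

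For a pair $(q_1,\overline{q_2})$ we look for a $\rho$-equivariant map $\widetilde S\to\mathbb G$ that is complex harmonic with respect to the "bicomplex" structure $(c,\overline c)$ and has Hopf differentials $(q_1,\overline{q_2})$. Concretely, we pull back a complex metric of the form
\[
g = q_1 + e^{u}\, (\text{conformal part}) + \overline{q_2},
\]
where the conformal factor $u$ is now complex-valued. The complex harmonic map equation then reduces to a complexified Bochner/Gauss equation
\[
\Delta u = e^{u} - e^{-u} q_1\overline{q_2} \quad(\text{suitably normalized}),
\]
with curvature $-1$. Given a solution $u$, the data $(g,\nabla)$ integrate to a developing map into $\mathbb G$, whose holonomy lies in $\PSL(2,\C)$. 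We define $\Phi(q_1,\overline{q_2})$ to be the class of this holonomy.

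The key steps, in order:

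\emph{(a) Existence on the three pieces.} On $\Delta$ ($q_2=q_1$) the real solution is Wolf's. The map into $\mathbb G$ is the graph of the harmonic diffeomorphism into $\Hyp^2$, so the holonomy is $\mathcal W(q)$. When $\overline{q_2}=0$ the equation decouples: the complex metric becomes degenerate of "projective" type, and the solution is governed by a complex projective structure. This is the step where one checks that the projection to one $\CP^1$ factor is a developing map with Schwarzian $-\tfrac12 q_1$, giving $\mathcal S(-\tfrac12 q)$. The symmetric statement handles $\{0\}\times H^0(\mathcal K_{\overline c}^2)$. For $(q,-\overline q)$, the Gauss map of Uhlenbeck's almost-Fuchsian minimal surface gives a map to $\mathbb G$ that is complex harmonic with these Hopf differentials, and its holonomy is $\mathcal U(q)$.

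\emph{(b) Openness and holomorphic dependence.} The linearization of the equation at each solution above is shown to be invertible. For Wolf's solution this holds because the operator is $\Delta - (e^u + |q|^2e^{-u})$, which is negative definite. In the projective and almost-Fuchsian cases one checks the same invertibility using the known nondegeneracy of those problems. The holomorphic implicit function theorem then produces a holomorphic family of solutions near each point. We define $\Omega$ as the maximal connected domain of continuation from these points where the linearization stays invertible.

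\emph{(c) $\C^*$-invariance.} Rescaling $z$-dependence by $\zeta$, i.e.\ the substitution $(q_1,\overline{q_2},u)\mapsto(\zeta^2q_1,\zeta^{-2}\overline{q_2},u)$, preserves the equation because only the product $q_1\overline{q_2}$ enters. Hence $\Omega$ can be taken to be invariant.

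\emph{(d) Local biholomorphism and uniqueness.} Both sides have dimension $6\mathtt g-6$. It suffices to show injectivity of the differential, which one gets by an infinitesimal rigidity argument: a first-order deformation with trivial holonomy variation gives a variation of an equivariant complex harmonic map, which must vanish by the same invertibility. Since the $\C^*$-invariance and connectedness are in place, the differential at points of $\Delta$ together with analytic continuation suffices, and by the identity theorem $\Phi$ is uniquely determined by property (1) on the totally real submanifold $\Delta$. This gives uniqueness.

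The main obstacle is (b)–(d): showing that $\Omega$ is \emph{connected} and simultaneously contains all three loci. I would first try a path argument. Each point $(q_1,0)$ connects to $0$ within the axis $H^0(\mathcal K_c^2)\times\{0\}$, where solutions exist explicitly, and $\Delta$, the Uhlenbeck locus and the axes all meet at $(0,0)$, the Fuchsian point. So one takes $\Omega$ to be the union of tubular neighbourhoods along these sets where the implicit function theorem applies, intersected with the component containing $0$. One must also verify that the continuations agree, i.e.\ there is no monodromy. This follows because the solution is locally unique and each piece is simply connected and star-shaped through $0$.
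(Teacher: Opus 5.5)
Your overall architecture matches the paper's. You solve the complexified Bochner equation and get invertibility of $L$ and solutions near $\Delta$, the axes ($u\equiv 0$) and the almost-Fuchsian locus. You then saturate by $\C^*$, set $\Phi=\mathrm{hol}\circ s$, and get uniqueness from the identity theorem on the totally real $\Delta$. There is, however, a genuine gap in (c)--(d), which is where the substance of the theorem lies.

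Any $\C^*$-invariant open set containing $(0,0)$ contains the unbounded region $\{\|q_1\|\,\|q_2\|<\varepsilon^2\}$; this is exactly how the axes get into $\Omega$. So you must control the holonomy at points of orbits far from where you applied the implicit function theorem. Invariance of the Bochner equation only transports $u$ and the invertibility of $L$, which depend only on $q_1\overline{q_2}$ and $u$. It says nothing about $\mathrm{hol}(\zeta\cdot p)$, since $\mathrm{hol}$ is not equivariant for any evident holomorphic action on the character variety. Two things are therefore missing:
\begin{enumerate}
\item[(i)] that the holonomy stays in $\chi^{an}$ along orbits;
\item[(ii)] that $d\,\mathrm{hol}$ stays injective. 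For example, at $(q,0)$ with $q$ large you need injectivity in the directions transverse to the axis, which the Schwarzian parametrization does not see.
\end{enumerate}
Your argument for (ii) fails. Invertibility of $L$ concerns deformations with \emph{fixed} Hopf differentials, so it only shows that $\pi$ is a local biholomorphism. A deformation with $d\,\mathrm{hol}=0$ generally moves $(q_1,\overline{q_2})$, and the $\pi_1$-invariant complex Jacobi field it produces need not vanish. For complex harmonic maps the Bochner-type quantities (e.g.\ $\langle V,V\rangle$) are complex-valued, so no maximum principle applies. Nor can ``analytic continuation'' propagate non-degeneracy: a holomorphic Jacobian that is nonzero near $\Delta$ can still vanish on an analytic hypersurface of a connected domain.

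The paper closes these gaps as follows. For (ii), the \emph{identity} $\zeta^*(\mathrm{hol}^*\omega^{\C})=\mathrm{hol}^*\omega^{\C}$ does follow by analytic continuation, from Hitchin's $S^1$-invariance of the Goldman (Weil--Petersson) form on the real locus $\mathbf W(S,c)$. Non-degeneracy of $\mathrm{hol}^*\omega^{\C}$ holds near $\mathbf W(S,c)$, since there $\mathrm{hol}$ restricts to Hitchin--Wolf's diffeomorphism onto the totally real $\mathrm{Fuch}(S)$. The invariance then carries it along whole orbits (Lemma~\ref{lem: symplectic action}, Theorem~\ref{thm: immersion}). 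For (i), Theorem~\ref{thm: A and B} shows that for each $\zeta$ one of $f_1^\zeta,\overline{f_2}^\zeta$ is a local diffeomorphism, given positivity of the pulled-back metric on the initial neighbourhood. That map develops a $\CP^1$-structure, whose holonomy lies in the smooth locus.

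Finally, you must check that the equivariant extension $s(\zeta\cdot x):=\zeta\cdot s(x)$ is single-valued. Orbits are not simply connected, and your ``simply connected, star-shaped pieces'' argument only covers the unsaturated union of tubes. This is why the paper works with $\C^*$-good neighbourhoods, which meet each orbit in a connected set.

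A minor point: for $\overline{q_2}=0$ the metric is not degenerate. There $u\equiv 0$ solves the equation and $g=h+q_1$ is non-degenerate.
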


Note that $\Omega\subset \Delta$ and $\C^*$-invariance imply that $\Omega$ contains $ H^0(\mathcal K_c^2)\times \{0\}$ and $\{0\}\times H^0(\mathcal K_{\overline c}^2).$
About Theorem \ref{thm: main}, it is interesting to notice that $\Delta$ and $\{(q, -\overline q)\}\cap \Omega$ are totally real submanifolds of maximal dimension inside $\Omega$. Hence, as $\mathcal W$ and $\mathcal U$ are real analytic, automatically there exist unique holomorphic maps in neighborhoods of $\Delta$ and of $\{(q, -\overline q)\}\cap \Omega$ satisfying conditions (1) and (3) from the statement. What is surprising is that these two holomorphic maps extend far away to a  larger domain $\Omega$, they coincide over it, and they give item $(2)$ for free.

On the subset of $\Omega$ that lands in the quasi-Fuchsian space $\QF(S)\subset \chi^{an}(\pi_1(S), \PSL(2,\C))$, we prove that we can invert $\Phi$. From this we deduce Corollary \ref{cor: main cor} below, which relates harmonic maps and the Bers embedding. In the paper, using the biholomorphism $\TSTS\to \QF(S)$ coming from Bers' Simultaneous Uniformization Theorem, we view $\{[c]\}\times \mathcal T(\overline S)$ and $\mathcal T(S)\times \{[\overline{c}]\}$ as complex submanifolds of $\QF(S)$.

\begin{cor}
\label{cor: main cor}
There exists an open subset $\mathcal A\subset \mathcal{QF}(S)$ with 
  \[
  \mathcal A\supset \mathrm{Fuch}(S)\ \cup\ \mathcal T(S)\times\{[\overline c]\}\ \cup\ \{[c]\}\times \mathcal T(\overline S) \ \cup \ \mathrm{AF}(S,c),
  \] 
  and a unique holomorphic embedding 
    \[
    \Psi\colon \mathcal A\to H^0(\mathcal K_c^2)\times H^0(\mathcal K_{\overline c}^2). \] 
    such that, in the identification $\TSTS\cong \QF(S)$,
    for all $[c']\in \mathcal T(S)$:
    \begin{enumerate}
        \item $\Psi([c'],[\overline{c}'])=(\mathcal{W}^{-1}([c']),\overline{\mathcal{W}^{-1}([c'])})$,
        \item $\Psi([c'],[\overline c])=(0,-2 \mathcal{B}e([c']))$, i.e., the second component gives a rescaling of the Bers embedding relative to $c$, and similar for points $([ c],[\overline c'])$.
        \item $\Psi(\mathcal U(q))=(q, -\overline q)$, for all $q\in \mathsf {Uh}(S,c)$.
    \end{enumerate}

\end{cor}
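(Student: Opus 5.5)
The idea is to obtain $\Psi$ as a local inverse of the map $\Phi$ from Theorem~\ref{thm: main}, restricted to a suitable subset of $\Omega$ on which $\Phi$ is injective with image in $\QF(S)$. Set $\Omega_0$ to be the connected component of $\Phi^{-1}(\QF(S))\cap\Omega$ containing $\Delta$. This set also contains $H^0(\mathcal K_c^2)\times\{0\}$, $\{0\}\times H^0(\mathcal K_{\overline c}^2)$ and $\{(q,-\overline q): q\in\mathsf{Uh}(S,c)\}$. Indeed, by items (1)–(3) of Theorem~\ref{thm: main} these sets map into $\mathrm{Fuch}(S)$, into the Bers slices, and into $\AF(S,c)\subset\QF(S)$ respectively, and each of them meets $\Delta$ at $0$. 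Each is connected, except that the Schwarzian images leave $\QF(S)$ outside the Bers domain. So I would instead restrict to the preimages of the Bers slices, namely $\{(q,0): -\tfrac12 q\in \overline{\mathcal Be}(\mathcal T(\overline S))\}$. This set is connected and contains $0$.

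Next, I would construct $\Psi$ on each of the four pieces by an explicit formula, using the known inverses.
\begin{itemize}
\item On $\mathrm{Fuch}(S)$, set $\Psi=(\mathcal W^{-1},\overline{\mathcal W^{-1}})$, using that $\mathcal W$ is a diffeomorphism.
\item On the Bers slice $\mathcal T(S)\times\{[\overline c]\}$, set $\Psi=(0,-2\mathcal Be)$, and similarly on the other slice. This is consistent with item (2), because the Schwarzian parametrization composed with the Bers embedding is the identity on the slice.
\item On $\AF(S,c)$, use $\mathcal U^{-1}$. This is available because Uhlenbeck's map is injective: the minimal surface in an almost-Fuchsian manifold is unique.
\end{itemize}
These pieces agree on their overlaps: they all pass through the Fuchsian point $[c]$ and are sent there to $0$. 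Each piece is a right inverse of $\Phi$.

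The key step is to extend $\Psi$ holomorphically to an open neighbourhood $\mathcal A$ of the union of these pieces, as a single-valued inverse of $\Phi$. Since $\Phi$ is a local biholomorphism, it has a holomorphic local inverse near every point of each piece, and these local inverses agree with the formulas above. On the totally real submanifold $\mathrm{Fuch}(S)$ and on the complex submanifolds given by the Bers slices, the inverse is uniquely determined by the identity principle. To globalize, I would take the union $K$ of the four pieces inside $\Omega$. On $K$ the map $\Phi$ is injective, because each explicit inverse recovers the point. Then I would apply the standard lemma: a local homeomorphism that is injective on a subset, and proper enough near it, is injective on a neighbourhood. This requires some care because $K$ is not compact. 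I would handle this by using the explicit inverses: define $\mathcal A$ as the union of the images under $\Phi$ of small neighbourhoods $U_x$ of points $x\in K$, chosen so that $\Phi|_{U_x}$ is a biholomorphism. Then I would shrink these neighbourhoods continuously so that the local inverses glue, which means checking that whenever $\Phi(U_x)\cap\Phi(U_y)\neq\emptyset$ the two local inverses agree.

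\textbf{Main obstacle.} This gluing or injectivity step near the noncompact set $K$ is where I expect the difficulty. I would first try a tubular-neighbourhood argument with radii depending continuously on the point, combined with the fact that $\Phi|_K$ is a homeomorphism onto its image, which is closed in $\QF(S)$ locally. Once this is done, $\Psi$ is holomorphic as a local inverse of a holomorphic map and is an embedding with inverse $\Phi$. Uniqueness follows because any holomorphic map satisfying (1) is determined on a neighbourhood of the maximal totally real submanifold $\mathrm{Fuch}(S)$, and then on the connected set $\mathcal A$ by analytic continuation. If needed, I would replace $\mathcal A$ by its connected component containing $\mathrm{Fuch}(S)$, which still contains all four pieces since each meets $\mathrm{Fuch}(S)$.
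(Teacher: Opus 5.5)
There is a genuine gap in the step where you say the four explicit inverses ``agree on their overlaps: they all pass through the Fuchsian point $[c]$ and are sent there to $0$,'' and then conclude that $\Phi$ is injective on $K$ ``because each explicit inverse recovers the point.'' Having a common point is not the same as having \emph{only} that common point. Each formula inverts $\Phi$ on its own piece, so it gives injectivity of $\Phi$ on each piece separately. Injectivity on the union $K$ needs more: equivalently, $\Psi$ must be well defined on $\mathrm{Fuch}(S)\cup\mathcal T(S)\times\{[\overline c]\}\cup\{[c]\}\times\mathcal T(\overline S)\cup\AF(S,c)$. This requires that the four images pairwise meet only at $([c],[\overline c])$.

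For the Fuchsian locus and the two Bers slices this is immediate. For $\AF(S,c)$ it is a real statement. Suppose some $\mathcal U(q)$ with $q\neq 0$ had lower conformal structure at infinity $[\overline c]$, say $\mathcal U(q)=([c'],[\overline c])$. Then your Bers formula gives $(0,-2\mathcal{B}e([c']))$ while $\mathcal U^{-1}$ gives $(q,-\overline q)$. In that case $\Phi$ is not injective on $K$, and no $\Psi$ satisfying both (2) and (3) can exist. Nothing in your proposal rules this out, and no care in the later gluing step can compensate for it.

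The missing ingredient is the paper's Lemma~\ref{lem: global inj of hol}. Write $\mathcal U(q)=([c^q_1],[\overline{c^q_2}])$. The lemma says that each of $[c^q_1]=[c]$, $[\overline{c^q_2}]=[\overline c]$, or $[c^q_1]=[c^q_2]$ forces $q=0$.
\begin{enumerate}
\item The Fuchsian case follows from uniqueness of the minimal surface in an almost-Fuchsian manifold: it must be the totally geodesic one, so $q=0$.
\item The Bers-slice cases need a geometric input you do not have. By Krasnov--Schlenker, $c^q_1$ and $c^q_2$ are the conformal classes of $\Is((\mathrm{Id}\pm\mathtt B)\cdot,(\mathrm{Id}\pm\mathtt B)\cdot)=\Is+\Is(\mathtt B\cdot,\mathtt B\cdot)\pm(q+\overline q)$. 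Since $\mathtt B$ is traceless, the first two terms are conformal to $\Is$. Hence the identity $(S,c)\to(S,\Is^*_\pm)$ is harmonic with Hopf differential $\pm q$.
\item Now suppose $[c^q_1]=[c]$. A conformal diffeomorphism isotopic to the identity is then also harmonic. The target $(S,\Is^*_+)$ is negatively curved, so harmonic maps in that homotopy class are unique. This forces the conformal map to be the identity, hence the identity is conformal and $q=0$.
\end{enumerate}

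With this lemma in hand (the paper's Proposition~\ref{prop: global inj of hol}), your globalization plan matches the paper's argument. The paper applies the generalized inverse function theorem near each piece and takes the interior of the set where the local inverses agree. Your uniqueness argument via the totally real locus $\mathrm{Fuch}(S)$ is also fine.
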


We prove Theorem \ref{thm: main} using \emph{equivariant complex harmonic maps} from the universal cover $\widetilde S$ to the space $\mathbb G$ of oriented geodesics of $\mathbb H^3$. Complex harmonic maps into holomorphic Riemannian manifolds, and the associated notion of complex harmonic $G$-bundles (a generalization of harmonic bundles or stable Higgs bundles), were introduced in \cite{ES_Chmaps}. In some sense, complex harmonic maps provide an analytic continuation for the theory of ordinary harmonic maps, and they have a very rich geometry and theory.

Theorem \ref{thm: main} goes through global analysis, similar to \cite[Theorem A]{ElSa} and \cite[Theorem B]{ES_Chmaps}. In fact, the map $\Phi$ can be partially recovered from the map $\mathcal{B}_G$ from \cite[Theorem B]{ES_Chmaps} for $G=\textrm{PSL}(2,\R)^2$. We decided to give the rank $1$ setting its own simpler treatment and moreover to carry out a dedicated study of the geometry. In particular, what we have here, which cannot be gleaned from \cite[Theorem B]{ES_Chmaps}, is: (1) invariance under the full $\C^*$-action, (2) that $\Phi$ is a local biholomorphism, (3) the relation with almost Fuchsian geometry and Uhlenbeck's immersion, and (4) the invertibility on the quasi-Fuchsian image.

Finally, we have completely avoided using Higgs bundle techniques, not drawing on \cite{ES_Chmaps}, and proved Theorem \ref{thm: main} using just rank $1$ holomorphic Riemannian geometry. The motivation for this choice is that Theorem \ref{thm: main} has the flavour of a classical theorem in Teichm{\"u}ller theory, so it should have a compatible proof.
\begin{remark}
   One of the most classical constructions involving quadratic differentials is via Teichm{\"u}ller geodesics. Although Theorem \ref{thm: main} does not make contact with Teichm{\"u}ller geodesics, we note that they can also be realized using harmonic maps (see \cite{Kuwert1996}, \cite{PanWolf2026}).
\end{remark}

\subsection{Complex harmonic maps}
By using concrete mappings in $\mathbb G$, we also use this paper as an opportunity to provide a more geometric understanding of complex harmonic maps. Some of this goes toward Theorem \ref{thm: main} and some of it does not. For instance, in Section \ref{subsec: examples}, we give basic examples of complex harmonic maps, including:
\begin{itemize}
    \item complex harmonic maps arising from osculation maps of $\mathbb{CP}^1$ structures;
    \item complex harmonic maps to complex geodesics, or equivalently, to $(\C,dz^2)$ and $(\C^*, dz^2/z^2)$;
    \item ``exotic" conformal complex harmonic maps from $\C$ to $\mathbb{G}$.
\end{itemize}
One meta-principle worth making explicit is that any real analytic equation or identity in the theory of ordinary harmonic maps should have a complexified version for complex harmonic maps. We use Theorem \ref{thm: main} to give one such example below.

Let $\omega$ be the Weil-Petersson symplectic form on Teichm{\"u}ller space and let $E: \mathcal{T}(S)\to (0,\infty)$ be the Dirichlet energy functional that associates a class of a hyperbolic metric $[h]$ with the energy of the unique harmonic map $(S,c)\to (S,h)$. Let $X$ be the pushforward via $\mathcal{W}$ of the infinitesimal generator of the $S^1$-action on $H^0(S,\mathcal{K}_c^2)$ given by $e^{i\theta}\cdot q=e^{2i\theta} q$, and let $\iota_X$ be the contraction operator on differential forms. It follows from results in \cite{Hitchin:1986vp} (see the the top of Section 7 and Proposition 11.17 in \cite{Hitchin:1986vp}) that 
\begin{equation}\label{eq: energy circle action}
    dE=\iota_X \omega.
\end{equation}
When we complexify, embedding $\mathcal{T}(S)=\textrm{Fuch}(S)$ into $\chi^{an}(\pi_1,\textrm{SL}(2,\C))$, $\omega$ becomes the holomorphic Goldman symplectic form $\omega^{\C}$. On the harmonic map side, it's best to define an extension $E^{\C}$ for $E\circ \mathcal{W}$ on $\Omega$, rather than an extension of $E$ on a subset of $\chi^{an}$. We define $E^{\C}(q_1,\overline{q_2})$ to be the \textit{complex Dirichlet energy} of the complex harmonic map associated with $(q_1,\overline{q}_2)$ (see Section \ref{sec: definition}). The $S^1$-action on $H^0(S,\mathcal{K}_c^2)$ complexifies to the $\C^*$-action on $\Omega$ from Theorem \ref{thm: main}, and we denote the infinitesimal generator by $Z$. In order to extend (\ref{eq: energy circle action}), we have to pull back $\omega^{\C}$ to $\Omega$ using $\Phi$. The corollary below is immediate from Theorem \ref{thm: main} because $E^{\C}$, $Z$, and $\Phi^*\omega^{\C}$ holomorphically extend $E\circ \mathcal{W}$, (the pullback of) $X$, and $\mathcal{W}^*\omega$ respectively.
\begin{cor}\label{cor: complex energy}
    On $\Omega$, $$dE^{\C}=\iota_{Z} (\Phi^*\omega^{\C}).$$
\end{cor}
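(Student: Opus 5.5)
The plan is an identity-theorem argument: both sides of the claimed equality are holomorphic $1$-forms on $\Omega$, and they already agree on the totally real submanifold $\Delta$ by Hitchin's identity \eqref{eq: energy circle action}.

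\textbf{Holomorphy of each side.} The left side $dE^{\C}$ is holomorphic once $E^{\C}$ is. The complex harmonic map attached to $(q_1,\overline{q_2})$ depends holomorphically on $(q_1,\overline{q_2})\in\Omega$, as built in the proof of Theorem \ref{thm: main}. The complex energy density is a holomorphic (bilinear, not Hermitian) expression in this map, so integrating it over $S$ gives a holomorphic function on $\Omega$. On the right side, $Z$ is the holomorphic vector field generating the holomorphic $\C^*$-action $(q_1,\overline{q_2})\mapsto(\zeta^2q_1,\zeta^{-2}\overline{q_2})$; explicitly $Z=(2q_1,-2\overline{q_2})$ as a linear vector field. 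Since $\Phi$ is holomorphic by Theorem \ref{thm: main} and $\omega^{\C}$ is a holomorphic symplectic form, $\Phi^*\omega^{\C}$ is a holomorphic $2$-form and $\iota_Z(\Phi^*\omega^{\C})$ is a holomorphic $1$-form.

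\textbf{Agreement on $\Delta$.} Here I would check four compatibilities.
\begin{enumerate}
\item On $\Delta$ the complex harmonic map is the equivariant harmonic map into $\mathbb H^2\subset\mathbb G$, so $E^{\C}(q,\overline q)=E(\mathcal W(q))$. This may hold only up to a universal constant depending on normalizations, which I would track.
\item By item (1) of Theorem \ref{thm: main}, $\Phi|_\Delta=\mathcal W$.
\item $\omega^{\C}$ restricts on $\mathrm{Fuch}(S)$ to $\omega$, again up to a normalization constant.
\item $Z$ is tangent to $\Delta$ along the circle directions. For $\zeta=e^{i\theta}$ we have $\zeta^{-2}=\overline{\zeta^2}$, so $Z$ restricted to $\Delta$ equals $X$ pulled back by $\mathcal W$ when $\Delta$ is identified with $H^0(\mathcal K_c^2)$.
\end{enumerate}
Combining these with \eqref{eq: energy circle action}, the two holomorphic $1$-forms agree when evaluated on vectors tangent to $\Delta$.

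\textbf{Identity theorem.} $\Delta$ is a totally real submanifold of maximal dimension. At a point of $\Delta$, the complex-linear extension of a form from $T\Delta$ is unique, so the two holomorphic $1$-forms agree at every point of $\Delta$. Their coefficients are therefore holomorphic functions vanishing on a maximal totally real submanifold. Such functions vanish on a neighbourhood of $\Delta$, and since $\Omega$ is connected, they vanish on all of $\Omega$.

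\textbf{Main obstacle.} The main difficulty is bookkeeping rather than analysis. One must confirm that the normalizations of $E^{\C}$, of $\omega^{\C}$ versus the Weil–Petersson form, and of the generator $Z$ versus $X$ (factors of $2$ from $\zeta^2$) match those in Hitchin's identity. One must also confirm the holomorphic dependence of $E^{\C}$ on the parameters, which I would take from the construction in Section \ref{sec: definition}. If the constants do not match exactly, the statement holds with the constants absorbed into the definitions.
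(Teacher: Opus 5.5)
Your argument is the paper's: it too simply observes that $E^{\C}$, $Z$ and $\Phi^*\omega^{\C}$ are holomorphic extensions of $E\circ\mathcal W$, of $X$ and of $\mathcal W^*\omega$, so \eqref{eq: energy circle action} on the maximal totally real submanifold $\Delta$ propagates to the connected set $\Omega$ by the identity theorem. One correction to your bookkeeping: the holomorphic extension of $X$ is $Z=(2iq_1,-2i\overline{q_2})$, the generator of $t\mapsto e^{it}$, whereas your $(2q_1,-2\overline{q_2})$ restricts to $-iX$ on $\Delta$ and is not tangent to it, so your item (4) fails for it (with your field the identity would read $dE^{\C}=i\,\iota_Z(\Phi^*\omega^{\C})$, a factor that no real normalization of $E^{\C}$ or $\omega^{\C}$ can absorb).
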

Note that if we push $E$ and $\omega$ to $H^0(S,\mathcal{K}_c^2)$ via $\mathcal{W}$,
\eqref{eq: energy circle action} becomes $d(E\circ \mathcal{W})_q(r)=(\Phi^*\omega)(2iq,r)$. If $D$ is the push-forward of the Weil-Petersson covariant derivative, differentiating gives $$\textrm{Hess}_q (E\circ \mathcal{W})(r,s)=(D_r(\mathcal{W}^*\omega))_q(2iq,s)+(\mathcal{W}^*\omega)_q(2ir,s).$$
Setting $q=0$, we obtain $$\textrm{Hess}_0(E\circ \mathcal{W})(r,s)=(\mathcal{W}^*\omega)_0(2ir,s),$$ which is exactly Wolf's expression for the Hessian of the energy from \cite[Theorem 5.7]{W2}. By analyticity, the analogous formula holds for the complex Hessian of the complex energy $E^{\C}$ and the holomorphic Goldman symplectic form $\omega^{\C}$.

\subsection{The $\C^*$-action and the Deligne-Hitchin moduli space}\label{sec: intro, twistor}
We close the introduction with an independent theorem, concerning Higgs bundles, which came out of studying the $\C^*$-action on the set $\Omega$ from Theorem \ref{thm: main}.
This $\C^*$-invariance is a manifestation of a natural $\C^*$-action on the space of complex harmonic maps. In Section \ref{subsec: C* action}, we give some geometric interpretation to this action and explain how it leads to families of $\mathbb{CP}^1$-structures on varying Riemann surfaces. Very briefly, we start with a (not necessarily closed) surface $\Sigma$ and a complex harmonic map $f=(f_1,f_2):(\widetilde{\Sigma},c,\overline{c})\to \mathbb{CP}^1\times \mathbb{CP}^1\backslash \Delta$ such that both $f_1$ and $f_2$ are local diffeomorphisms. Acting by $\zeta\in \C^*$ gives another complex harmonic map $f^\zeta=(f_1^\zeta,f_2^{\zeta})$. We prove in Theorem \ref{thm: A and B} (an ingredient toward Theorem \ref{thm: main}) that at least one of $f_1^\zeta$ and $f_2^{\zeta}$ is a local diffeomorphism, and hence determines a complex projective structure. 

We recall that ordinary harmonic maps from a Riemann surface determine Higgs bundles, and that complex harmonic maps in a similar fashion give complex harmonic bundles. The $\C^*$-action becomes an action on the space of complex harmonic bundles that extends the well-studied $S^1$-action on the space of Higgs bundles. As discussed in \cite[Section 5]{ES_Chmaps}, when we take the $\C^*$-orbit of a Higgs bundle (i.e., a complex harmonic bundle in the real locus), we obtain a real twistor line for the Deligne-Hitchin moduli space (as constructed by Deligne and Simpson, see \cite{SimpsonHodgeFiltration}), i.e., a specific type of section of the twistor space of the hyperK{\"a}hler moduli space of Higgs bundles. Any real twistor line is identified with a family of flat connections of the form 
$$D_\zeta=\overline{\partial}_E+(\overline{\partial}_E)^{*_h}+\zeta\phi +\zeta^{-1}\phi^{*_h}, \qquad \zeta\in \C^*,$$
where $(E,\overline{\partial}_E,\phi)$ is a Higgs bundle and $h$ is a harmonic metric. Real twistor lines are intimately related to the hyperK{\"a}hler geometry of Higgs bundle moduli spaces, and have been the subject of numerous recent works (see, for instance \cite{BeckHellerRoeserEnergy}, \cite{BiswasHellerRoeserReal}, \cite{SchulzNilpotent}, and Gaiotto's conjecture \cite{Gai}, \cite{DFKMMN}, \cite{collier2024conformallimitsparabolicslnchiggs}).

Taking all of this together, considering the $\C^*$-action on complex harmonic maps leads to an interpretation of certain twistor lines as parameterizations of families of $\mathbb{CP}^1$-structures. In fact, this interpretation recovers and extends the main result from Gothen-Silva's paper \cite{silva2024conformal}. Going a bit beyond the context of Theorems \ref{thm: main} and \ref{thm: A and B}, we prove the most general result in this direction (for $\textrm{SL}(2,\R)$-Higgs bundles), studying the $\C^*$-orbits of Higgs bundles that correspond to harmonic maps that might not be immersions (as in \cite{silva2024conformal}). We explain the relation with \cite{silva2024conformal} and give more comments on the theorem in Section \ref{sec: proof of twistor line theorem}. Given a Beltrami form $\mu$ of $L^\infty$-norm less than $1$ on a Riemann surface, we let $c(\mu)$ be the complex structure obtained via the measurable Riemann mapping theorem.
\begin{thm}\label{thm: twistor lines}
    Let $(E,\overline{\partial}_E,\phi)$ be a $\textrm{SL}(2,\R)$-Higgs bundle on a Riemann surface $(\Sigma,c)$ with harmonic metric $h$, and let $$D_\zeta=\overline{\partial}_E+(\overline{\partial}_E)^{*_h}+\zeta\phi +\zeta^{-1}\phi^{*_h}, \qquad \zeta\in \C^*$$ be the associated real twistor line. Assume that the equivariant harmonic map $f$ from $\widetilde{\Sigma}$ to $\mathbb{H}^2$ associated with $h$ either is an orientation preserving branched immersion, or parametrizes a geodesic. Let $\mu_f$ be the Beltrami form of $f$, and $$A_f=\{\zeta \in \C^*: |\zeta|^{2} > ||\mu_f||_\infty\}, \qquad  B_f=\{\zeta \in \C^*: |\zeta|^{2}< ||\mu_f||_\infty^{-1}\}.$$ Then, for $\zeta\in A_f$, $D_\zeta$ is a branched oper over $(\Sigma, c(\zeta^2\mu))$, and for $\zeta \in B_f$, $D_\zeta$ is a branched oper over $(\overline{\Sigma}, c(\zeta^{-2}\overline{\mu}))$. The associated maps to spaces of complex projective structures, $$A_f\to \mathcal{CP}(\Sigma), \qquad B_f\to \mathcal{CP}(\overline{\Sigma}),$$ are holomorphic.
\end{thm}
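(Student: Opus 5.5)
The plan is to identify $D_\zeta$ with the holonomy of the complex harmonic map $f^\zeta=(f_1^\zeta,f_2^\zeta)$ in the $\C^*$-orbit of $f$. Here $f$ is regarded as a complex harmonic map $\widetilde\Sigma\to\mathbb G\cong\mathbb{CP}^1\times\mathbb{CP}^1\backslash\Delta$ via the totally geodesic inclusion $\mathbb H^2\subset\mathbb H^3$, i.e. $f=(f_1,\overline{f_1})$ in the real locus. The whole statement then becomes a question about whether $f_1^\zeta$ (respectively $f_2^\zeta$) is the developing map of a $\mathbb{CP}^1$-structure, possibly branched, and for which complex structure it is holomorphic. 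Concretely, I would use the dictionary of Section~\ref{subsec: C* action} between the flat connection of a complex harmonic bundle and the Gauss/osculation data of the map into $\mathbb G$. Under this dictionary, $D_\zeta$ is the flat connection of $f^\zeta$, and the two factors of $\mathbb G$ correspond to the two $D_\zeta$-parallel line subbundle directions.

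\textbf{Step 1.} Compute the pullback complex metric. Write the real pullback as
\[
f^*g=q+e\,\sigma+\overline q=(\ell\,dz+\overline m\,d\overline z)(\overline\ell\,d\overline z+m\,dz),
\]
with $\mu_f=\overline m/\ell$ up to the paper's convention. The $\C^*$-action rescales the $(1,0)$ and $(0,1)$ Higgs parts by $\zeta$ and $\zeta^{-1}$. The pullback of the holomorphic Riemannian metric by $f^\zeta$ should therefore factor as
\[
(\zeta\ell\,dz+\zeta^{-1}\overline m\,d\overline z)(\zeta^{-1}\overline\ell\,d\overline z+\zeta m\,dz).
\]
Each factor is the pullback of $d f_i^\zeta$ in an affine chart, up to a nonvanishing multiple. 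The first factor is a multiple of $dz+\zeta^{-2}\mu_f\,d\overline z$, and the second is a multiple of $d\overline z+\zeta^{2}\overline\mu_f\,dz$, after fixing conventions to match the statement's $c(\zeta^2\mu)$ and $c(\zeta^{-2}\overline\mu)$.

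\textbf{Step 2.} When the relevant coefficient has sup-norm $<1$, i.e. $\zeta\in A_f$ (respectively $B_f$), this factor is a $(1,0)$-form for the quasiconformal structure $c(\zeta^2\mu)$ (respectively a $(0,1)$-form for $c(\zeta^{-2}\overline\mu)$ on $\overline\Sigma$). Hence $f_1^\zeta$ (respectively $f_2^\zeta$) is holomorphic for that structure. It is a local diffeomorphism exactly where $\ell\neq0$, and in the immersed case this is Theorem~\ref{thm: A and B}. Its equivariance under the holonomy of $D_\zeta$ then exhibits $D_\zeta$ as a (branched) oper over that Riemann surface.

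\textbf{Step 3.} Handle the degenerate cases. For a branched immersion, $\ell$ has isolated zeros of finite order; there $f_1^\zeta$ has finite-order branch points, since locally it is a holomorphic function with a zero of the same order in the derivative. For a map parametrizing a geodesic, $|\mu_f|\equiv1$, so $A_f=\{|\zeta|>1\}$ and $B_f=\{|\zeta|<1\}$. In this case I would use the explicit examples of complex harmonic maps to complex geodesics ($(\C^*,dz^2/z^2)$) from Section~\ref{subsec: examples} to check directly that $f_1^\zeta$ is a branched developing map. Its branch points are the zeros of $q$.

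\textbf{Step 4.} Prove holomorphicity in $\zeta$. The family $D_\zeta$ is holomorphic in $\zeta$, so the developing maps $f_i^\zeta$, as solutions of a holomorphic family of linear ODEs, vary holomorphically. The Beltrami forms $\zeta^{2}\mu$ vary holomorphically, so by Ahlfors--Bers the point $c(\zeta^2\mu)\in\mathcal T(\Sigma)$ and its normalizing quasiconformal maps do too. The Schwarzian of $f_1^\zeta$ composed with the inverse normalizing map, taken relative to a holomorphically varying base projective structure (e.g.\ Bers' section), is then holomorphic in $\zeta$.

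I expect the main obstacle to be Step 3 at the branch points. There one must show that the oper/branched projective structure extends across the zeros of $\ell$ with the correct (varying) complex structure, uniformly in $\zeta$. This is needed so that the map to $\mathcal{CP}(\Sigma)$ is well defined and holomorphic, and not merely defined on the complement of the branch locus. I would first try a local normal form near a zero of $\ell$ and removable singularity arguments for the Schwarzian.
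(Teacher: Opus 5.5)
Your proposal is correct. Its core (Steps 1--2 and the branched case) is the paper's own argument: the paper also realizes $D_\zeta$ as the flat connection of $f^\zeta$, writes the deformed pullback metric as $\bigl(U+\frac1U\frac{|q|^2}{h^2}\bigr)h+\zeta^2q+\zeta^{-2}\overline q$, and reruns the proof of Theorem~\ref{thm: A and B}, with branch points where the factors vanish.

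You depart from the paper in two places.

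\emph{The geodesic case.} The paper says Theorem~\ref{thm: A and B} goes through verbatim once the order of $(f_1^\zeta,\overline{f_2}^\zeta)$ is fixed at one $\zeta$ inside and one outside the unit circle. You instead use the explicit form $\exp(u+\overline v)$ of maps into a complex geodesic. Your route is more transparent, because here the deformed metric is $(\zeta\sqrt q+\zeta^{-1}\overline{\sqrt q})^2$, degenerate everywhere. Off the zeros of $q$, both $df_1^\zeta$ and $d\overline{f_2}^\zeta$ are multiples of $\zeta\sqrt q+\zeta^{-1}\overline{\sqrt q}$. So both components are orientation preserving for $|\zeta|>1$ and orientation reversing for $|\zeta|<1$, branched at the zeros of $q$, and there is no ordering to track.

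\emph{Holomorphicity in $\zeta$.} The paper combines Lemma~\ref{lem: holo dependence Bonsante} with the fact that holonomy is a local biholomorphism on $\mathcal{CP}$. This is shorter, but as written it relies on nondegenerate pullback metrics and on unbranched structures over a closed surface. Your argument (holomorphic dependence of $D_\zeta$-parallel transport, Ahlfors--Bers, Schwarzians) applies uniformly to the branched, geodesic and open cases. One caution: the inverse $x^\zeta=(w^\zeta)^{-1}$ of the normalized solution is not holomorphic in $\zeta$. What is holomorphic is the composite $f_1^\zeta\circ x^\zeta$. Differentiating $w^\zeta(x^\zeta(y))=y$ gives $\partial_{\overline\zeta}x^\zeta=-\nu\,\overline{\partial_\zeta x^\zeta}$. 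This cancels against $\partial_{\overline z}f_1^\zeta=\nu\,\partial_z f_1^\zeta$, because $f_1^\zeta$ and $w^\zeta$ share the Beltrami coefficient $\nu=\zeta^{-2}\mu_f$.

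A few smaller points.

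\emph{Which subbundles.} The two factors of $\mathbb G$ correspond to the $Q$-isotropic subbundles $L,L^{-1}$ of $E$. These are $\nabla$-parallel but not $D_\zeta$-parallel; otherwise the developing maps would be constant. The map $f_i^\zeta$ is their $D_\zeta$-parallel transport to a base fibre.

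\emph{The ordering in Step 1.} The identity $g^\zeta=-4(f_1^\zeta-\overline{f_2}^\zeta)^{-2}\,df_1^\zeta\,d\overline{f_2}^\zeta$ matches the two factors with $[df_1^\zeta],[d\overline{f_2}^\zeta]$ only as an unordered pair. The ordering is the real content. As in Theorem~\ref{thm: A and B}, it comes from continuity in $\zeta$ starting at $\zeta=1$, applied pointwise off the branch locus. There the two lines stay distinct for every $\zeta$, because the determinant $-\frac14(|\ell|^2-|m|^2)^2$ of $g^\zeta$ in the coframe $(dz,d\overline z)$ does not depend on $\zeta$.

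\emph{The Beltrami coefficient.} Your computation gives the coefficient $\zeta^{-2}\mu_f$ on $A_f$, which is the one with sup-norm less than $1$ there (compare the discussion after Theorem~\ref{thm: A and B}). Keep it rather than adjusting to the exponent as printed.

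\emph{Branch points.} These are a milder obstacle than you expect. The map $f_1^\zeta$ is smooth and solves the Beltrami equation with coefficient $\zeta^{-2}\mu_f$ off a discrete set, with $\|\zeta^{-2}\mu_f\|_\infty<1$. So it equals $F\circ w^\zeta$ with $F$ holomorphic and nonconstant, hence is branched of finite order. Your claim that the order equals that of $\ell$ does not follow from the product identity, which only controls $df_1^\zeta\,d\overline{f_2}^\zeta$; but the statement does not need it.
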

Note that Fuchsian representations correspond exactly to harmonic immersions, and in this case the $\mathbb{CP}^1$ structures are unbranched. The orientation preserving assumption is a normalization that can be removed (one has to replace $\mu$ with $\mu^{-1}$ and one gets the same result). 

The branched immersion condition implies $|\mu_f|<1$ everywhere. If $||\mu_f||_\infty<1$ (so, for instance, if $\Sigma$ is closed), then $$A_f\cup B_f =\C^*,\qquad A_f\cap B_f\neq \emptyset,$$ so Theorem \ref{thm: twistor lines} indeed gives a statement about the whole $\C^*$-orbit. When $f$ parametrizes a geodesic, we have to omit the unit circle: $|\mu|=1$ everywhere and hence $A_f=\mathbb{D}\backslash \{0\}$ and $B_f=\mathbb{C}\backslash\overline{\mathbb{D}}$. In this case, the path $\zeta\mapsto c(\zeta^2\mu)$, $\zeta \in \mathbb{D}$, parametrizes a complex Teichm{\"u}ller disk, and $\zeta\mapsto c(\zeta^{-2}\overline{\mu})$, $|\zeta|>1$, parametrizes a complex Teichm{\"u}ller disk for the oppositely oriented surface.

 \begin{remark}
   A non-real complex harmonic map gives a section of the Deligne-Hitchin moduli space that need not be real. The complex energy $E^{\C}$ of the complex harmonic map (as in Corollary \ref{cor: complex energy}) is exactly the energy of a twistor section as defined in \cite{BeckHellerRoeserEnergy}.
 \end{remark}

\subsection{Outline of the paper}
In Section \ref{sec: preliminaries} we introduce the holomorphic Riemannian space form $\mathbb{G}$ and give the required background on complex metrics. In Section \ref{sec: c harmonic maps} we discuss complex harmonic maps on general surfaces (no compactness, varying complex structures). We establish basic properties and the Bochner formula in Theorem \ref{thm: Bochner equation and harmonicity}, and then we give many examples, discuss the $\C^*$-action, and prove Theorem \ref{thm: A and B} about $\mathbb{CP}^1$-structures. In Section \ref{sec: main theorem} we specialize to a closed surface with a single complex structure, and prove Theorem \ref{thm: main} and Corollary \ref{cor: main cor}. A result on the Goldman form in Section \ref{subsec: holonomy immersion} is of independent interest. Finally, in Section \ref{sec: Higgs bundles}, we briefly discuss complex harmonic maps, and then we prove Theorem \ref{thm: twistor lines}.

\subsection{Acknowledgments}
The first author is funded by the European Union (ERC, \textit{GENERATE}, 101124349). Views and opinions expressed are however those of the author(s) only and do not necessarily reflect those of the European Union or the European Research Council Executive Agency. Neither the European Union nor the granting authority can be held responsible for them. 

The second author gratefully acknowledges that a portion of this research was completed while he was a visiting scholar at the Fields Institute for Research in Mathematical Sciences.

\section{Preliminaries}\label{sec: preliminaries}
\subsection{Holomorphic Riemannian manifolds}
A \textbf{holomorphic Riemannian manifold} $(\mathbb{X},\inners)$ is the data of a complex manifold $\mathbb{X}$ equipped with a non-degenerate and holomorphic symmetric $\C$-bilinear form $\inners$ on the holomorphic tangent bundle of $\mathbb{X}$, which we canonically identify with its (smooth) tangent bundle. The pairing $\inners$ is referred to as the \textbf{holomorphic Riemannian metric}. Holomorphic Riemannian manifolds (in a way that can be made precise) ``complexify" the theory of Riemannian manifolds; on a holomorphic Riemannian manifold, we have analogues of objects from Riemannian geometry: \textbf{Levi-Civita} (holomorphic) affine connections (a torsion free holomorphic connection that makes $\inners$ parallel), curvature tensors, geodesic (complex)
curves, sectional curvatures on non-degenerate $2$-planes, etc. See \cite{BEE} for an account of the theory.

The basic example of a holomorphic Riemannian manifold is $\mathbb C^n$ with a flat holomorphic Riemannian metric $\inners$ induced by a complex bilinear form on $\C^n$. It is proved in \cite[Theorem 2.6]{BEE} that simply connected holomorphic space forms, such as $(\mathbb C^n,\inners)$, are uniquely determined up to isometry by their curvature. Along with this example, we have the $n$-dimensional hyperbolic space form of constant curvature $-1$,  concretely modeled as the hyperboloid
$$ \hat {\mathbb X}_{n}=\{\underline z \in \C^{n+1}\ |\ \langle \underline z, \underline z\rangle_{n,1}:= - z_{n+1}^2+ \sum_{j=1}^n z_j^2 =-1\},$$ which inherits a holomorphic metric from the inclusion into $(\C^{n+1}, \inners_{n,1}).$ It is standard to identify $\hat {\mathbb X}_n$ with $\mathrm{SO}(n+1,\C)/\mathrm{SO}(n,\C)$, equipped with a suitably normalized Killing form.

 In this paper, we're mostly interested in the $2$-dimensional hyperbolic space form $\hat {\mathbb X}_{2},$ which admits another geometric characterization  as the space of oriented geodesics of hyperbolic space $\mathbb{H}^3$. To see this: $\mathrm{SO}(3,\C)$ acts transitively on the space of oriented geodesics, and the stabilizer of any oriented geodesic is conjugate to $\mathrm{SO}(2,\C)$. Identifying geodesics in $\mathbb{H}^3$ with their endpoints in $\mathbb{CP}^1$ determines a biholomorphism $$\hat{\mathbb X}_2\to \mathbb{G}:=\mathbb{CP}^1\times \mathbb{CP}^1\backslash \Delta.$$ Under this biholomorphism, the pushed-forward holomorphic Riemannian metric has the following description: given any complex affine chart $(V,z)$ on $\CP^1$, the metric on $(V\times V\setminus \Delta, (z_1,z_2))$  has the form \[
 \inners= -\frac 4 {(z_1-z_2)^2} dz_1\cdot dz_2 \ 
 \]
(see \cite[Section 2.4]{BEE}). In this model, the isometry group of $(\mathbb G, \inners)$ is generated by the (diagonal) action of $\PSL(2,\C)$ and the swap $(z_1, z_2)\mapsto (z_2,z_1)$.

The space $\mathbb{G}$ comes with two types of complex geodesics: the non-degenerate ones, isometric to $(\C^*, \frac{dz^2}{z^2})$, and the degenerate ones, which are parametrized by $\C$ and on which the metric vanishes. Identifying with $\mathrm{SL}(2,\C)/\mathrm{SO}(2,\C)$, one obtains the non-degenerate geodesics by exponentiating the $\C$-span of a semisimple matrix in the Lie algebra, and a degenerate one by exponentiating the $\C$-span of a nilpotent matrix.

\subsection{Complex metrics on surfaces}
 Throughout the paper, given a smooth manifold $M,$ we denote the complexified tangent bundle $TM\otimes_{\R}\C$ by $\C TM.$ Similarly, the complexified cotangent bundle is denoted $\C T^*M$. 
\begin{defn}
    A \textbf{complex metric} on a smooth (real) manifold $M$ is a non-degenerate and smooth symmetric $\C$-bilinear form on $\C TM$.
\end{defn}
The first example of a complex metric is the bilinear extension of a Riemannian metric to the complexified tangent bundle. Throughout the paper, we will view a complex metric on a manifold $M$ as the same thing as a $\pi_1(M)$-invariant complex metric on the universal cover $\widetilde{M},$ and we won't distinguish notation.

Complex metrics were studied in detail in \cite{BEE}. Similar to holomorphic Riemannian manifolds, basic constructions in Riemannian geometry extend to the setting of complex metrics. Most of the proofs apply verbatim, and the formulas are the same, except we're working on complex bundles (see \cite{BEE} for details). For instance, every complex metric $g$ has a \textbf{Levi-Civita connection} (a torsion-free affine connection on $\C TM$ that makes $g$ parallel), gradients, Laplacians (denoted $\Delta_g$), curvature tensors, sectional curvature on non-degenerate $2$-planes, etc. 

Now let $\Sigma$ be a surface. A complex metric $g$ on $\Sigma$ determines a smooth splitting of $\C T\Sigma$ into $g$-isotropic lines $\C T\Sigma=L_1\oplus L_2$. To make the definition below, we point out that the projectivized real bundle $\mathbb P T_p \Sigma$ embeds $\mathbb P \C T_p\Sigma$, with each fiber embedding as an equatorial circle that cuts the sphere into two connected components.

\begin{defn}\label{def: positive}
    A complex metric $g$ is \textbf{positive} if, at each point of $\Sigma$, the associated isotropic directions $L_1,L_2\in \mathbb P (\C T\Sigma)$ do not lie in $\mathbb P T \Sigma$ and have images in distinct components of $\mathbb P 
(\C T\Sigma)\backslash \mathbb P(T \Sigma).$

\end{defn}

An orientation on $\Sigma$ fixed, the induced orientation of the circle $\mathbb P T_p\Sigma\subset \mathbb {P}(\C T_p\Sigma)$ allows us to label the two connected components of $ \mathbb {P}(\C T_p\Sigma)\setminus \mathbb P T_p\Sigma$ as the ``inner" and the ``outer" disk. In the following, we will assume that $L_1$ and $L_2$ lie in the inner and in the outer disk respectively.

The most important observation to make about a positive complex metric $g$ is that its conformal class defines a pair $\ccpair$ of oppositely oriented complex structures. More precisely, the isotropic directions $L_1$ and $L_2$ determine oppositely oriented almost complex structures $J_1$ and $J_2$ on $\Sigma$ as follows: the line sub-bundle $L_1$ (resp. $L_2$) is declared to be the $(-i)$-eigenline for $J_2$ (resp. $J_1$). One can easily see that $J_1$ and $-J_2$ are consistent with the orientation. 
If $z$ and $\overline w$ are local coordinates for the almost complex structures $J_1$ and $J_2$ respectively, then $L_1=Span_{\mathbb C} (\partial_w)$ and $L_2=Span_{\mathbb C}(\partial_{\overline z})$, and $g$ locally has the form $g=\lambda dzd\overline w.$

Note that Riemannian metrics complexify to positive complex metrics because the isotropic lines will be antipodal. Moreover, in this case we have $J_2=-J_1$, which is equivalent to saying that the associated complex structures are conjugate.

\subsubsection{Admissible immersions and Bers metrics}
Let $\widetilde{\Sigma}$ be the universal cover of $\Sigma$. We say that an immersion $f\colon\widetilde \Sigma\to \mathbb G$ is \textbf{admissible} if it is totally real. Equivalently, for each point $p\in \Sigma$, the complexification of the derivative realizes an isomorphism from $\C T_p\widetilde \Sigma$ to $T_{f(p)}\mathbb G$, which in turn is the same as saying that $g=f^*\inners$ is a complex metric. It turns out that all complex metrics of constant curvature $-1$ arise in this fashion.
\begin{prop}[Corollary 4.2 from \cite{BEE}]\label{prop: metric to immersion}
Let $g$ be a complex metric on $\Sigma$. There exists an equivariant admissible immersion $f\colon \widetilde \Sigma\to \mathbb G$ such that $g=f^*\inners$ if and only if $g$ has constant curvature $-1$. The map $f$ is unique up to post composing with isometries $(\mathbb G, \inners)$.
\end{prop}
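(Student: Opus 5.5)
The plan is a complexified Gauss–Codazzi/Cartan argument, organised as: necessity, local existence, global equivariance, uniqueness.

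\emph{Necessity.} Suppose $g=f^*\inners$ for an admissible immersion $f$. Since $d f^{\C}\colon \C T_p\widetilde\Sigma\to T_{f(p)}\mathbb G$ is an isomorphism, $f$ identifies $(\widetilde\Sigma,g)$ locally with $(\mathbb G,\inners)$ in complexified directions. The Levi-Civita connection of $g$ is then the pullback of the holomorphic Levi-Civita connection of $\mathbb G$; this holds because the pullback connection is torsion free and makes $g$ parallel, and the Levi-Civita connection is unique. The curvature tensors correspond in the same way. Since $\mathbb G\cong\hat{\mathbb X}_2$ has constant curvature $-1$, so does $g$.

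\emph{Local existence.} Assume $K_g\equiv -1$. I would use a moving frame. On a simply connected open set $U$, choose a $g$-orthonormal frame $(e_1,e_2)$ of $\C TU$; it exists locally since $g$ is a non-degenerate complex bilinear form. Let $\theta^1,\theta^2$ be the dual coframe and $\omega$ the $\mathfrak{so}(2,\C)$ connection form, so $d\theta^1=-\omega\wedge\theta^2$, $d\theta^2=\omega\wedge\theta^1$, and $d\omega=-K\,\theta^1\wedge\theta^2=\theta^1\wedge\theta^2$. Assemble these into an $\mathfrak{so}(3,\C)$-valued $1$-form
\[
A=\begin{pmatrix}0&-\omega&\theta^1\\ \omega&0&\theta^2\\ \theta^1&\theta^2&0\end{pmatrix}
\]
(signs adapted to $\inners_{2,1}$). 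The structure equations together with $K=-1$ are exactly the Maurer–Cartan equation $dA+A\wedge A=0$. Since $U$ is simply connected, we can integrate to get $F\colon U\to \mathrm{SO}(3,\C)$ with $F^{-1}dF=A$. Set $f=F\cdot(0,0,1)\in\hat{\mathbb X}_2$. Then $df=F(\theta^1,\theta^2,0)^T$, so $f^*\inners=(\theta^1)^2+(\theta^2)^2=g$. The complexified differential is injective because $\theta^1,\theta^2$ span $\C T^*$, so $f$ is admissible.

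\emph{Global existence and equivariance.} I would avoid frames globally by a development argument, which is also where I expect the main care to be needed. By the local step, any two local solutions differ by a unique isometry on connected overlaps. This rigidity follows from the uniqueness below together with the fact that $\mathbb G$ is a homogeneous space form (\cite[Theorem 2.6]{BEE}). Analytic continuation along paths in the simply connected $\widetilde\Sigma$ then produces a global $f$. For $\gamma\in\pi_1(\Sigma)$, the map $f\circ\gamma$ pulls back $g$ as well, since $g$ is $\pi_1$-invariant. By uniqueness $f\circ\gamma=\rho(\gamma)\circ f$ for a unique isometry $\rho(\gamma)$, and $\rho$ is a homomorphism, which gives equivariance.

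\emph{Uniqueness.} Let $f,f'$ be admissible with the same pullback metric. Fix a point $p$. There is an isometry $\phi$ of $\mathbb G$ with $\phi f(p)=f'(p)$ and $d\phi\circ df^{\C}_p=df'^{\C}_p$. To see this, note that $df'_p\circ(df_p)^{-1}$ is a linear isometry $T_{f(p)}\mathbb G\to T_{f'(p)}\mathbb G$, and the isometry group of $\mathbb G$, namely $\mathrm{PSL}(2,\C)$ together with the swap, i.e.\ $\mathrm O(3,\C)$, acts transitively on orthonormal frames. The lifts $F,F'$ built from the same frame $(e_1,e_2)$ then satisfy the same ODE $F^{-1}dF=A$ along paths with the same initial value after applying $\phi$. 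Hence $\phi\circ f=f'$ near $p$, and everywhere by connectedness and analyticity of the Cartan ODE. This uniqueness step and the gluing rely on the same frame-lift argument; the only subtle point is checking that the full isometry group, including the swap, is needed to match orientations of frames.
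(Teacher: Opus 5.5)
The paper gives no proof of this proposition. It is quoted from \cite[Corollary 4.2]{BEE}, where it sits inside Bonsante--El Emam's general theory of admissible immersions into holomorphic Riemannian space forms: a complexified Gauss--Codazzi theory, valid in any dimension and with non-trivial shape operator. Your proposal is a self-contained proof of exactly the two-dimensional, codimension-zero case used here, and it is correct in substance. Necessity comes from identifying $\nabla^g$ with the pulled-back holomorphic Levi-Civita connection. Local existence comes from integrating a Maurer--Cartan form with values in $\mathfrak{so}(3,\C)$. Global existence and the representation $\rho$ come from developing along paths in $\widetilde\Sigma$, and uniqueness from comparing frame lifts. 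The citation buys generality; your route buys transparency. In particular, it shows concretely why the uniqueness clause must allow the swap. The stabiliser of a point in $\mathrm{SO}(3,\C)\cong\PSL(2,\C)$ is $\mathrm{SO}(2,\C)$, which acts simply transitively on only one of the two $\mathrm{SO}(2,\C)$-orbits of orthonormal frames at that point. Correspondingly, $(f_1,\overline{f_2})$ and $(\overline{f_2},f_1)$ always have the same pullback metric.

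There are two details to tidy.

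\emph{Signs in the local existence step.} With your convention $d\theta^1=-\omega\wedge\theta^2$, $d\theta^2=\omega\wedge\theta^1$, the curvature identity reads $d\omega=K\,\theta^1\wedge\theta^2$, not $-K\,\theta^1\wedge\theta^2$. To see this, put $\eta^{1,2}=\theta^1\pm i\theta^2$, so that $\eta_c=-i\omega$, and use $d\eta_c=\frac K2\eta^1\wedge\eta^2$ from Proposition~\ref{prop: connection forms}. Moreover, $dA+A\wedge A=0$ for your $A$ would force $d\theta^1=+\omega\wedge\theta^2$. Putting $\omega$ in the $(1,2)$ entry and $-\omega$ in the $(2,1)$ entry fixes both problems, and then the Maurer--Cartan equation is exactly $K=-1$.

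\emph{The frame lift in the uniqueness step.} You should say why the lift $F=\bigl(df(e_1)\,|\,df(e_2)\,|\,f\bigr)$ of an arbitrary admissible $f$ with $f^*\inners=g$ satisfies $F^{-1}dF=A$. The tangential part of $d\bigl(df(e_i)\bigr)$ is $df(\nabla^g e_i)$ by your necessity step. The normal part is $\theta^i f$, because $\langle d(df(e_i)),f\rangle=-\langle df(e_i),df\rangle=-\theta^i$ and $\langle f,f\rangle=-1$. Then $d(F'F^{-1})=0$ gives $f'=Cf$ locally with $C\in\mathrm O(3,\C)$. Propagating this uses only connectedness, together with the fact that an element of $\mathrm O(3,\C)$ fixing $f(p)$ and acting trivially on $df^{\C}(\C T_p\widetilde\Sigma)=T_{f(p)}\mathbb G$ is the identity. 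No analyticity of the ODE is involved, and indeed $g$ is only smooth.
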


In the proposition above, $g$ is positive if and only if, upon identifying $\mathbb{G}$ with $\mathbb{CP}^1\times \mathbb{CP}^1\backslash \Delta$, the two projections to $\mathbb{CP}^1$ are local diffeomorphisms with opposite orientations (see \cite[Proposition 4.9]{BEE}). Moreover, when $g$ can be deformed to a positive metric through constant curvature $-1$ metrics, the map $f$ is equivariant for a representation $\rho\colon \pi_1(\Sigma)\to \PSL(2,\C)$, called the \textbf{holonomy} of $f$. We will often also refer to the class of $\rho$ in the character variety as the holonomy of $g$.

The most important complex metrics arising from the pull-back construction are Bers metrics, which arise from Bers' Simultaneous Uniformization Theorem (see \cite{SU}). One formulation of Bers' Theorem is that, for any pair of oppositely oriented complex structures $c_1$ and $\overline{c_2}$ on $\Sigma$, we have a quasi-Fuchsian representation $\rho$ and a pair of $\rho$-equivariant biholomorphisms $f_1:(\widetilde{\Sigma},c_1)\to \Omega_\rho^+$ and $\overline{f_2}:(\widetilde{\Sigma},\overline{c_2})\to \Omega_\rho^-$, where $\Omega_\rho^+$ and $\Omega_\rho^-$ are disjoint domains in $\mathbb{CP}^1$ whose boundaries coincide and are a $\rho$-invariant quasicircle. We put $f_1$ and $\overline{f_2}$ together to obtain a $\rho$-equivariant admissible immersion $(f_1,\overline{f_2}):\widetilde{\Sigma}\to \mathbb{G}$.
The bilinear form
\[
h=(f_1, \overline{f_2})^*\inners
\]
is a $\pi_1(\Sigma)$-invariant positive complex metric on $\widetilde \Sigma$ -- hence on $\Sigma$ -- that we will call a \textbf{Bers metric}. Necessarily, $h$ induces the complex structures $(c_1, \overline{c_2})$, and from the proposition above, $h$ has constant curvature $-1$. When $f_1=f_2$ (necessarily $c_1=c_2=:c$), $h$ is the complexification of a Riemannian metric of constant curvature $-1$ in the conformal class determined by $c$.

 If $\Sigma$ is closed, the data $(\rho,f_1,\overline{f_2})$ is unique up to composing $f_1$ and $\overline{f_2}$ and conjugating $\rho$ with the same element in $\PSL(2,\C)$; for $\Sigma$ not closed, they are unique up to a suitable joint normalization (see \cite{ESholodependence}). 
 The map $(c_1, \overline{c_2})\mapsto \rho$ takes image in the open subset of homomorphisms into $\PSL(2,\C)$ corresponding to quasi-Fuchsian representations. Moreover, it descends to a biholomorphism
\[
\TSTS\xrightarrow{\sim} \QF(S)\subset \chi^{\mathrm{an}}(\pi_1(S), \PSL(2,\C)).
\]
By the above discussion, when $\Sigma$ is closed, $(c_1,\overline{c_2})$ are induced by a unique Bers metric, which we denote by $\hpair$. It is further proved in \cite{BEE} that Bers metrics form a connected component of the subset of smooth complex metrics of constant curvature $-1$, and $\ccpair \mapsto \vb*h{(c_1,\overline{c_2})}$ defines a bijective (in fact, holomorphic by \cite{ESholodependence}) correspondence between pairs of complex structures and Bers metrics.

\subsubsection{Isotropic frames} \label{subsubsec: connection forms}
In the proof of Theorem \ref{thm: Bochner equation and harmonicity} below, we'll make use of isotropic frames for complex metrics together with the related curvature formula.
\begin{prop}\label{prop: connection forms}
    Let $g$ be a complex metric on $\Sigma$, and let $\eta^1,\eta^2$ be local complex-valued 1-forms such that $g=\eta^1\cdot \eta^2$. Then, there exists a unique locally defined 1-form $\eta_c$ such that 
\begin{equation}
\label{eq: isotropic LC forms}
    \begin{cases}
  d\eta^1=-\eta_c\wedge \eta^1  \\
  d\eta^2=\eta_c\wedge \eta^2,
\end{cases}
\end{equation}
and, if $K_g$ is the sectional curvature, $$d\eta_c= \frac{K_g}{2}\eta^1\wedge \eta^2.$$ 
\end{prop}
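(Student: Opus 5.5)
The plan is to work in the local frame dual to $(\eta^1,\eta^2)$ and to reduce everything to linear algebra on $2$-forms. First I note that, since $g=\eta^1\cdot\eta^2$ is non-degenerate, $\eta^1$ and $\eta^2$ are pointwise linearly independent. They therefore form a coframe of $\C T^*\Sigma$, and $\eta^1\wedge\eta^2$ is a nowhere vanishing complex $2$-form. The line $\ker\eta^2$ is spanned by the frame vector $e_1$ with $\eta^1(e_1)=1,\ \eta^2(e_1)=0$. Since $g(e_1,e_1)=0$, $e_1$ is isotropic; the same holds for $e_2$. So this is exactly the isotropic splitting $L_1\oplus L_2$.

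\textbf{Existence and uniqueness.} Write $d\eta^1=a\,\eta^1\wedge\eta^2$ and $d\eta^2=b\,\eta^1\wedge\eta^2$ for smooth complex functions $a,b$. Any $1$-form can be written as $\eta_c=\alpha\eta^1+\beta\eta^2$. Then
\[
-\eta_c\wedge\eta^1=\beta\,\eta^1\wedge\eta^2,\qquad \eta_c\wedge\eta^2=\alpha\,\eta^1\wedge\eta^2 .
\]
Hence system \eqref{eq: isotropic LC forms} holds if and only if $\beta=a$ and $\alpha=b$. This gives both existence and uniqueness of $\eta_c$.

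\textbf{Identification with the Levi-Civita connection.} The Levi-Civita connection $\nabla$ of $g$ preserves $g$. In complex dimension $2$, the $g$-orthogonal transformations near the identity preserve each isotropic line, acting by $\lambda$ on one and $\lambda^{-1}$ on the other. So $\nabla$ preserves $L_1$ and $L_2$, and there is a $1$-form $\omega$ with
\[
\nabla\eta^1=-\omega\otimes\eta^1,\qquad \nabla\eta^2=\omega\otimes\eta^2 .
\]
The opposite signs come from $\nabla g=0$ applied to $g=\eta^1\cdot\eta^2$. Torsion-freeness gives $d\eta^i=\mathrm{Alt}(\nabla\eta^i)$, i.e.\ $d\eta^1=-\omega\wedge\eta^1$ and $d\eta^2=\omega\wedge\eta^2$. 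By the uniqueness just proved, $\omega=\eta_c$.

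\textbf{Curvature.} The curvature of $\nabla$ on the line bundle $L_2$ is $d\eta_c$, and on $L_1$ it is $-d\eta_c$. The plan is to write $R(X,Y)e_2=d\eta_c(X,Y)\,e_2$ and then evaluate
\[
K_g=\frac{g(R(e_1,e_2)e_2,e_1)}{g(e_1,e_1)g(e_2,e_2)-g(e_1,e_2)^2}=-\frac{g(R(e_1,e_2)e_2,e_1)}{g(e_1,e_2)^2}.
\]
The denominator is non-zero because the plane is non-degenerate. With $g(e_1,e_2)$ determined by the convention for the symmetric product $\eta^1\cdot\eta^2$, this gives $d\eta_c(e_1,e_2)=\frac{K_g}{2}$ up to sign, i.e.\ $d\eta_c=\frac{K_g}{2}\eta^1\wedge\eta^2$.

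The main obstacle is bookkeeping of constants and signs. These come from the normalization of the symmetric product, the sign convention for $R$ and for sectional curvature, and the value of $(\eta^1\wedge\eta^2)(e_1,e_2)$. I would first carry out the computation symbolically. I would then fix or check the constant on a model: the complexified hyperbolic metric $g=\frac{4\,dz\,d\overline z}{(1-|z|^2)^2}$ with $\eta^1=\frac{2dz}{1-|z|^2}$ and $\eta^2=\frac{2d\overline z}{1-|z|^2}$, where $K_g=-1$ is known. Since all formulas are algebraic identities in the coframe, verifying the constant on one example settles it in general.
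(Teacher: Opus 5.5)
Your proposal follows the paper's proof closely. The existence and uniqueness step is identical: the paper's $\eta_c=B\eta^1+A\eta^2$ is your $b\eta^1+a\eta^2$. The curvature step uses the same dual isotropic frame and the same formula $K_g=-g(R(e_1,e_2)e_2,e_1)/g(e_1,e_2)^2$.

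The only real variation is how you identify the Levi-Civita connection. The paper writes down $\nabla E_1=\eta_c\otimes E_1$ and $\nabla E_2=-\eta_c\otimes E_2$, checks that this is metric and torsion-free, and invokes uniqueness of $\nabla^g$. You instead start from $\nabla^g$, show it preserves the isotropic lines, and invoke uniqueness of $\eta_c$. Both work. The precise infinitesimal form of your claim is $g(\nabla_X e_1,e_1)=\tfrac12 X\,g(e_1,e_1)=0$, combined with $e_1^{\perp}=\mathrm{span}(e_1)$.

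There is, however, a sign slip in your curvature paragraph, and you should fix it directly rather than leave it to the model check. From $\nabla\eta^2=\eta_c\otimes\eta^2$ you get $\eta^2(\nabla_X e_2)=-(\nabla_X\eta^2)(e_2)=-\eta_c(X)$. Hence $\nabla e_2=-\eta_c\otimes e_2$ and $R(X,Y)e_2=-d\eta_c(X,Y)\,e_2$. So the curvature on $\mathrm{span}(e_1)$ is $+d\eta_c$ and on $\mathrm{span}(e_2)$ it is $-d\eta_c$, the opposite of what you wrote.

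With $\eta^1\cdot\eta^2=\tfrac12(\eta^1\otimes\eta^2+\eta^2\otimes\eta^1)$ one has $g(e_1,e_2)=\tfrac12$, so
\[
K_g=-\frac{-\tfrac12\,d\eta_c(e_1,e_2)}{\tfrac14}=2\,d\eta_c(e_1,e_2).
\]
Since $(\eta^1\wedge\eta^2)(e_1,e_2)=1$, this is exactly $d\eta_c=\tfrac{K_g}{2}\eta^1\wedge\eta^2$. With the sign as you wrote it, you would get $-\tfrac{K_g}{2}$ instead.

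Your disk model would catch this. There one finds $\eta_c=\frac{\overline z\,dz-z\,d\overline z}{1-|z|^2}$ and $d\eta_c=-\tfrac12\eta^1\wedge\eta^2$, consistent with $K_g=-1$ and the correct sign. Using one example to pin a universal constant is logically sound. Here, though, the direct bookkeeping is only two lines, so the model check is unnecessary once the sign is corrected.
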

This mirrors a standard construction for Riemannian metrics; we include the proof for completeness.
\begin{proof}
There are unique locally defined functions $A$ and $B$ such that $d\eta^1 = A\eta^1\wedge \eta^2$ and $d\eta^2 = B \eta^1\wedge \eta^2$. Define $$\eta_c=B\eta^1+A\eta^2.$$ Then, $d\eta^1=A\eta^1\wedge \eta^2= -A\eta^2\wedge \eta^1=-\eta_c\wedge \eta^1$, and similarly $d\eta^2= \eta_c\wedge \eta^2$, i.e., (\ref{eq: isotropic LC forms}) holds. Moreover, $\eta_c$ is the unique $1$-form satisfying \eqref{eq: isotropic LC forms}: for any other such $1$-form $\hat{\eta}_c$, $$(\eta_c-\hat{\eta}_c)\wedge \eta^1=(\eta_c-\hat{\eta}_c)\wedge \eta^2=0,$$ which implies $\eta_c-\hat{\eta}_c$ is parallel to both $\eta^1$ and $\eta^2$, and then linear independence of $\{\eta^1,\eta^2\}$ forces $\eta_c=\hat{\eta}_c$.

For the curvature formula, let $(E_1,E_2)$ be the dual frame of the complexified tangent bundle. This frame is isotropic for $g$, and $g(E_1,E_2)=\frac 12 $. The Levi-Civita connection $\nabla^g$ satisfies $\nabla^g E_1 = \eta_c \otimes E_1$ and $\nabla^g E_2 = -\eta_c\otimes E_2$. Indeed, it is routine to check that these expressions define a compatible and torsion free connection, which by uniqueness must agree with $\nabla^g$. Now, we calculate
\begin{align*}
R^g(X,Y)E_1
&=
\nabla^g_X\bigl(\eta_c(Y)E_1\bigr)
-\nabla^g_Y\bigl(\eta_c(X)E_1\bigr)
-\eta_c([X,Y])E_1 \\
&=
\bigl(
X(\eta_c(Y))-Y(\eta_c(X))-\eta_c([X,Y])
\bigr)E_1 
+\eta_c(Y)\eta_c(X)E_1
-\eta_c(X)\eta_c(Y)E_1 \\
&=
d\eta_c(X,Y)E_1.
\end{align*}
An analogous calculation returns $R^g(X,Y)E_2=-d\eta_c(X,Y)E_2.$ Hence, using $g(E_1,E_2)=\frac{1}{2}$ and the isotropy condition, $$K_g=\frac{g(R^g(E_1,E_2)E_2,E_1)}{-g(E_1,E_2)^2}=\frac{g(-d\eta_c(E_1,E_2)E_2,E_1)}{-g(E_1,E_2)^2}=\frac{-d\eta_c(E_1,E_2)g(E_1,E_2)}{-g(E_1,E_2)^2}=2d\eta_c(E_1,E_2).$$ We conclude that $d\eta_c=\frac{K_g}{2}\eta^1\wedge \eta^2.$
\end{proof}

\section{Complex harmonic maps to $\mathbb G$}\label{sec: c harmonic maps}
Throughout, let $\Sigma$ be an oriented surface, not necessarily closed.

\subsection{Definition}\label{sec: definition}
Let $(c_1,\overline{c_2})$ be oppositely oriented complex structures on $\Sigma$.  We decompose the complexified cotangent bundle $\C T^*\Sigma$ as $\C T^*\Sigma = (T^*\Sigma)_{c_1}^{1,0}\oplus (T^*\Sigma)_{c_2}^{0,1}$ and write $\Pi_+$ for the projection onto the first factor and $\Pi_-$ for the projection onto the second. Let $(\mathbb{X},\inners)$ be a holomorphic Riemannian manifold with Levi-Civita connection $D$. The pullback connection
$\nabla^f\colon \Gamma(f^*T\mathbb X)\to \Gamma(T^* \Sigma\otimes f^*T\mathbb X)$ extends uniquely to a map
$$\nabla^f\colon \Gamma(f^*T\mathbb X)\to \Gamma(\C T^* \Sigma\otimes f^*T\mathbb X)$$ 
satisfying the $\C$-linearity condition $\nabla^f_{X+iY}=\nabla^f_X+ i\nabla^f_Y$ for all $X,Y\in \Gamma(\C T\Sigma)$ and compatible with the pull-back metric $f^*\inners$ on $f^*T\mathbb X$. $\nabla^f$ determines an exterior covariant differential 
$$d^f\colon \Omega^1(\Sigma,f^*T\mathbb X)\to \Omega^2(\Sigma,f^*T\mathbb X)$$
defined by the standard property
$$d^f(\alpha\otimes \xi)= d\alpha\otimes \xi -\alpha \wedge \nabla^f\xi$$
for all $\alpha\in \Gamma(\C T^*\Sigma)$ and $\xi\in \Gamma(f^*T\mathbb X)$. 
\begin{defn}\label{def: charmonic}
    We say that a $C^2$ map $f:(\Sigma,c_1,\overline{c_2})\to (\mathbb{X},\inners)$ is \textbf{complex harmonic} if $d^f(\Pi_+\circ df)=0$.
\end{defn} 
\begin{remark}
\label{rmk: equivalent harmonicity conditions}
It is proved in \cite[Section 4.1]{ES_Chmaps} that $d^f(\Pi_+\circ df) = -d^f(\Pi_-\circ df) $, and that the vanishing of either of these sections is equivalent to $\textrm{tr}_h((\nabla^{h^*}\otimes \nabla^f) (df))=0$, where $h$ is any positive complex metric inducing complex structures $(c_1,\overline{c_2})$, and $\nabla^{h^*}$ is the dual of the Levi-Civita connection on $\C T^*\Sigma$. We can therefore define complex harmonicity as the vanishing of any of these three objects.
\end{remark}

The first example to have in mind is an ordinary harmonic map: when $c:=c_1=c_2,$ the above splitting of the complexified tangent bundle is just the ordinary splitting into types as $\C T^*\Sigma = (T^*\Sigma)_{c}^{1,0}\oplus (T^*\Sigma)_{c}^{0,1}$, and the equation above becomes the familiar harmonic map equation $d^f(df^{1,0})=0,$ with the only difference being that we're now mapping into a holomorphic Riemannian manifold. If $\mathrm X\subset \mathbb{X}$ is a totally real and totally geodesic Riemannian submanifold, then any harmonic map from $(\Sigma,c)$ into $\mathrm X$ is also a complex harmonic map from $(\Sigma,c,\overline{c})$ into $\mathbb{X}$.

\begin{remark}
\label{rmk: harmonicity in coords}
    In local holomorphic coordinates $z$ and $\overline{w}$ for $c_1$ and $\overline{c_2}$, for any $1$-form $\omega$, $$\Pi_+\omega = \frac{\omega(\partial_{w})}{\partial_{w}z}dz, \hspace{1mm} \Pi_-\omega = \frac{\omega(\partial_{\overline{z}})}{\partial_{\overline{z}}\overline{w}}d\overline{w}.$$
    By evaluating $d^f(\Pi_+ \circ df)$ with respect to the two induced local frames for $\C T\Sigma$, $(\partial_{z}, \partial_{\overline {z}})$ and $(\partial_{w}, \partial_{\overline {w}})$, we get that 
    \begin{equation}
    \label{eq: harmonicity in coords 1}
    (d^f(\Pi_+ df)) (\partial_{z}, \partial_{\overline {z}})= - \nabla^f_{\partial_{\overline {z}}} ((\Pi_+ df)(\partial_{z})= -\nabla^f_{\partial_{\overline {z}}}\frac{df(\partial_{w})}{\partial_{w} z},
    \end{equation}
    and similarly, 
    \begin{equation}
    \label{eq: harmonicity in coords 2}
     (d^f(\Pi_- df)) (\partial_{w}, \partial_{\overline {w}})= - \nabla^f_{\partial_{ w}} ((\Pi_+ df)(\partial_{\overline {w}})= -\nabla^f_{\partial_{ w}}\frac{df(\partial_{\overline {z}})}{\partial_{\overline {z}} \overline {w}},
    \end{equation}
    where we used that $(\Pi_+ df)(\partial_{\overline {z}})= 0=(\Pi_- df)(\partial_{ {w}})$.
    Thus, complex harmonicity can be expressed in coordinates as $\nabla^f_{\partial_{\overline {z}}}(\frac 1 {\partial_{w} z}{df(\partial_{w})})=0$ or $\nabla^f_{\partial_{ w}}(\frac1 {\partial_{\overline {z}} \overline {w}}{df(\partial_{\overline {z}})})=0$.
\end{remark}
We mention in passing that, by analogy with the real theory, we have a notion of complex Dirichlet energy of a $C^2$ map $f$, $\int_{\Sigma} \langle \Pi_+ df,\Pi_- df\rangle$, and complex harmonic maps are critical points. We won't have much use for a variational standpoint in this paper, but this is the Dirichlet energy considered in Corollary \ref{cor: complex energy}.

Analogous to the case of ordinary harmonic maps, we have a notion of Hopf differential. For any $C^2$ map $f:(S,c_1,\overline{c_2})\to (\mathbb{X},\inners)$ we can decompose
\begin{equation}\label{pullbackmetric}
    f^*\inners = 2\langle \Pi_+d f, \Pi_-d f\rangle + \langle \Pi_+d f, \Pi_+d f\rangle+\langle \Pi_-d f, \Pi_-df\rangle. 
\end{equation}
As in \cite{ES_Chmaps}, the tensors $q_1:=\langle \Pi_+ d f, \Pi_+ d f\rangle$ and $\overline{q_2}:=\langle  \Pi_-d f,  \Pi_-df\rangle$ will be called the \textbf{Hopf differentials} of $f$. 
\begin{prop}\label{prop: hopf}
    If $f$ is complex harmonic, $q_1$ and $\overline {q_2}$ are holomorphic quadratic differentials for $c_1$ and $\overline{c_2}$ respectively. When $\mathbb{X}$ has complex dimension $2$ and $f$ is admissible in an open and dense subset of $\Sigma$, the converse holds. 
\end{prop}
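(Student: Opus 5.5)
Both directions are local computations in coordinates $z$ (for $c_1$) and $\overline{w}$ (for $\overline{c_2}$), as in Remark \ref{rmk: harmonicity in coords}. Set
\[
\xi_+ := (\Pi_+df)(\partial_z)=\frac{df(\partial_w)}{\partial_w z}, \qquad \xi_-:=(\Pi_-df)(\partial_{\overline w})=\frac{df(\partial_{\overline z})}{\partial_{\overline z}\overline w}.
\]
Then $q_1=\langle \xi_+,\xi_+\rangle dz^2$ and $\overline{q_2}=\langle \xi_-,\xi_-\rangle d\overline w^2$. Holomorphicity of $q_1$ for $c_1$ means $\partial_{\overline z}\langle\xi_+,\xi_+\rangle=0$. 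Since $\nabla^f$ is compatible with the pull-back metric and $\C$-linear, this derivative equals $2\langle \nabla^f_{\partial_{\overline z}}\xi_+,\xi_+\rangle$. Complex harmonicity in the form (\ref{eq: harmonicity in coords 1}) gives $\nabla^f_{\partial_{\overline z}}\xi_+=0$, so $q_1$ is holomorphic. The same argument with (\ref{eq: harmonicity in coords 2}), differentiating in $\partial_w$, shows $\overline{q_2}$ is holomorphic for $\overline{c_2}$; here we use that $d^f(\Pi_-df)=0$ is equivalent to harmonicity (Remark \ref{rmk: equivalent harmonicity conditions}). This direction holds in any dimension.

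For the converse, assume $\dim_\C\mathbb X=2$ and that $q_1,\overline{q_2}$ are holomorphic. Let $U$ be the open dense set where $f$ is admissible. Since $f$ is $C^2$, the tension field $d^f(\Pi_+df)$ is continuous, so it suffices to show it vanishes on $U$. On $U$, the complexified differential maps $\C T_p\Sigma$ isomorphically onto $T_{f(p)}\mathbb X$. The vectors $df(\partial_w)$ and $df(\partial_{\overline z})$ span $\C T_p\Sigma$'s image, so $\xi_+,\xi_-$ form a basis of $T_{f(p)}\mathbb X$. Set $\tau:=\nabla^f_{\partial_{\overline z}}\xi_+$. It is enough to show $\langle\tau,\xi_+\rangle=0$ and $\langle\tau,\xi_-\rangle=0$, since non-degeneracy of $\langle\cdot,\cdot\rangle$ then forces $\tau=0$.

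The first pairing is $\tfrac12\partial_{\overline z}\langle\xi_+,\xi_+\rangle$, which vanishes by holomorphicity of $q_1$. For the second, use the identity $d^f(\Pi_+df)=-d^f(\Pi_-df)$ from Remark \ref{rmk: equivalent harmonicity conditions}. Write $\sigma:=d^f(\Pi_+df)$, a $2$-form. Evaluating it in the frame $(\partial_z,\partial_{\overline z})$ gives $-\tau$ by (\ref{eq: harmonicity in coords 1}). Evaluating it in the frame $(\partial_w,\partial_{\overline w})$ gives, by (\ref{eq: harmonicity in coords 2}), $\nabla^f_{\partial_w}\xi_-$. The two evaluations differ by the nonzero Jacobian factor $\det\partial(w,\overline w)/\partial(z,\overline z)$, so $\tau$ is a nonzero multiple of $\nabla^f_{\partial_w}\xi_-$.

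Hence $\langle\tau,\xi_-\rangle$ is a multiple of $\langle\nabla^f_{\partial_w}\xi_-,\xi_-\rangle=\tfrac12\partial_w\langle\xi_-,\xi_-\rangle$. This vanishes because $\overline{q_2}$ is holomorphic for $\overline{c_2}$, i.e. annihilated by $\partial_w$. So $\tau=0$ on $U$, and by density and continuity $f$ is complex harmonic everywhere.

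The main delicate step is exactly this conversion between the two frames via the identity $d^f(\Pi_+df)=-d^f(\Pi_-df)$, since it is what lets us test $\tau$ against $\xi_-$. It requires tracking the Jacobian factor and the orientation sign carefully. If citing the identity seemed insufficient, I would instead verify it directly from torsion-freeness: $d^f(df)=0$ and $df=\Pi_+df+\Pi_-df$.
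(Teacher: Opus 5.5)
Your proof is correct and follows the paper's argument essentially step for step. The forward direction uses metric compatibility together with the coordinate forms of the harmonicity equation. For the converse, you test $\nabla^f_{\partial_{\overline z}}\xi_+$ against the basis $\xi_+,\xi_-$, using that the evaluations of $d^f(\Pi_+df)$ in the two frames are nonzero multiples of each other, and then extend by density and continuity. The precise Jacobian factor and sign you attach there are immaterial, since only their nonvanishing is used.
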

\begin{proof}
    Let $z$ and $\overline w$ be local holomorphic coordinates for $c_1$ and $\overline{c_2}$ respectively.
    By metric compatibility and Equation \eqref{eq: harmonicity in coords 1},
    \begin{align*}
    \partial_{\overline z} (q_1(\partial_z, \partial_z)) &= \partial_{\overline z} (f^*\inner{(\Pi_+ df)(\partial_z), (\Pi_+ df)(\partial_z)}) = 2\inner{\nabla^f_{\partial_{\overline z}} ((\Pi_+ df)(\partial_z)), (\Pi_+ df)(\partial_z) }\\
    =& -2 \inner{(d^f(\Pi_+df))(\partial_z, \partial_{\overline z}), (\Pi_+df)(\partial_z)  }.
     \end{align*}
     Similarly, since $d^f(\Pi_+ df)=-d^f(\Pi_- df)$ (Remark \ref{rmk: equivalent harmonicity conditions}),
     \[
     \partial_w (\overline{q_2}(\partial_{\overline w}, \partial_{\overline w}))= -2 \inner{(d^f(\Pi_-df))(\partial_{\overline w}, \partial_{w}), (\Pi_-df)(\partial_{\overline w})  }= 2 \inner{(d^f(\Pi_+df))(\partial_{\overline w}, \partial_{w}), (\Pi_-df)(\partial_{\overline w})  }.
     \]
     Hence, if $d^f(\Pi_+ df)=0$, $q_1$ and $\overline{q_2}$ are holomorphic for $c_1$ and $\overline{c_2}$ respectively.
     
      For the converse, observe that if $f$ is admissible and $\mathbb{X}$ has complex dimension $2$, then for all $p\in \Sigma$, $df(\C T_p \Sigma)=T_{f(p)}\mathbb{X}$, hence, with $z$ and $\overline{w}$ as above, $T_p\mathbb{X}$ is spanned by the vectors $((\Pi_+ df)(\partial_z)= \frac{1 }{\partial_w z} df(\partial_w)$ and $((\Pi_- df)(\partial_{\overline w})= \frac{1 }{\partial_{\overline z} {\overline w}} df(\partial_{\overline z})$. 
      Since $(d^f(\Pi_+df))(\partial_{\overline w}, \partial_{w})$ and  $(d^f(\Pi_+df))(\partial_z,\partial_{\overline z})$ are multiples, the computation above shows that, if $q_1$ and $\overline {q_2}$ are holomorphic differentials, then $f$ is complex harmonic. Finally, if $f$ is admissible in just an open and dense set, the result continues to hold due to continuity of second derivatives.
\end{proof}

In the sequel, we will be mostly concerned with equivariant maps from the universal cover $\widetilde{\Sigma}$. It is easily verified that in this situation, the Hopf differentials and $\langle \Pi_+d f, \Pi_-d f\rangle$ descend to $\Sigma$. In this context, complex energy is defined by integrating the descended function on $\Sigma$.

\subsection{Bochner equation}\label{subsec: Bochner}

We continue with $\Sigma$ and $(c_1,\overline{c_2})$ as in the previous subsection. We prove in Theorem \ref{thm: Bochner equation and harmonicity} below that solutions to a complexified version of the classical Bochner equation produce complex harmonic maps with prescribed Hopf differentials. Theorem \ref{thm: Bochner equation and harmonicity} is a special case of Theorem C from \cite{ES_Chmaps}, which concerns more general harmonic maps to holomorphic symmetric spaces, and goes through the Higgs bundle machinery. Here we give a classically flavoured and more down-to-earth proof of Theorem \ref{thm: Bochner equation and harmonicity}, using Proposition \ref{prop: metric to immersion}. A proof that real harmonic maps satisfy the ordinary Bochner equation can be found in \cite{SchoenYauUnivalent}; we could not locate a proof of the converse direction (not using Higgs bundles) in the literature.

To motivate the statement of Theorem \ref{thm: Bochner equation and harmonicity}, we recall that for any ordinary harmonic map $f:(\widetilde{\Sigma},c)\to \mathbb{H}^2$ and Riemannian metric $h$ compatible with $c$, we can write the pullback metric as $$e(f)h + q +\overline{q},$$ where $e(f)$ is a function.
The function $e(f)$ splits as $$e(f)=||\partial f||^2+||\overline{\partial} f||^2,$$ and the map $f$ is an orientation-preserving immersion if and only if $||\partial f||>||\overline{\partial} f||$ everywhere (see \cite{SchoenYauUnivalent}). In this case, setting $U:=||\partial f||$, the pullback metric is rewritten as 
\begin{equation}\label{eq: real pullback metric}
    \left(U +\frac 1 U \frac{|q|^2}{ h^2}\right) h +q+\overline{q}.
\end{equation}

In Theorem \ref{thm: Bochner equation and harmonicity}, we look for complex harmonic maps with pullback metrics analogous to \eqref{eq: real pullback metric}.
For the statement, note the following: if $q_1=\phee dz^2$, $\overline{q_2}=\overline \psi d\overline w^2$ are quadratic differentials and $h=\lambda dzd\overline w$ is a complex metric, then $\frac{q_1\overline{q_2}}{h^2}$ defines a function.

\begin{thm}
\label{thm: Bochner equation and harmonicity}
Let $c_1$ and $\overline{c_2}$ be oppositely oriented complex structures on $\Sigma$ with Bers metric $h$. Let $q_1$ be a $c_1$-holomorphic quadratic differential and let $\overline{q_2}$ be a $\overline{c_2}$-holomorphic quadratic differential. Suppose there exists a function $U: \Sigma \to \C^*$ such that     
\begin{equation}
\label{eq: Bochner}
\Delta_h \log U= 2U-2\frac{q_1\overline{q_2} }{h^2}\frac 1 U-2.
\end{equation} Then, there exists an equivariant complex harmonic and admissible immersion $f\colon (\widetilde \Sigma,c_1,\overline{c_2})\to \mathbb G$, unique up to isometries of $\mathbb{G}$, with pull-back metric 
\begin{equation}\label{eq: pullback, complex}
    g=q_1+ \left(U +\frac 1 U \frac{q_1\overline{q_2}}{ h^2}\right) h +\overline{q_2}.
    \end{equation}
Conversely, if we have an equivariant complex harmonic map with pullback metric (\ref{eq: pullback, complex}), then the function $U$ solves (\ref{eq: Bochner}).
\end{thm}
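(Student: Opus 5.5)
Throughout, work in local coordinates $z$ for $c_1$ and $\overline w$ for $\overline{c_2}$, and write $h=\lambda\, dz\, d\overline w$, $q_1=\varphi\, dz^2$, $\overline{q_2}=\overline\psi\, d\overline w^2$.

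\textbf{Forward direction.} The idea is to build the immersion from its pullback metric using Proposition \ref{prop: metric to immersion}, and then check harmonicity with Proposition \ref{prop: hopf}.

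First, I would factor the candidate metric \eqref{eq: pullback, complex} into isotropic $1$-forms:
\[
\eta^1 = U^{1/2}\lambda^{1/2}\, dz + U^{-1/2}\lambda^{-1/2}\,\overline\psi\, d\overline w,
\qquad
\eta^2 = U^{1/2}\lambda^{1/2}\, d\overline w + U^{-1/2}\lambda^{-1/2}\,\varphi\, dz .
\]
Expanding $\eta^1\cdot\eta^2$ gives exactly $g$.
\begin{itemize}
\item The square roots exist locally. The product $\eta^1\cdot\eta^2$ is independent of the choice of branch up to a joint sign.
\item Non-degeneracy of $g$ follows from $\eta^1\wedge\eta^2=(U\lambda - U^{-1}\lambda^{-1}\varphi\overline\psi)\, dz\wedge d\overline w$. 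So $g$ is non-degenerate exactly where $U^2\neq |q_1\overline{q_2}/h^2|$ in the complexified sense, namely $U^2\lambda^2\neq\varphi\overline\psi$.
\item This non-degeneracy is a hypothesis to check: either it is implicit in ``admissible'', or it is forced by the equation. I would note it as a possible gap and, if necessary, restrict to where it holds.
\end{itemize}

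Second, I would compute the curvature of $g$ using Proposition \ref{prop: connection forms}.
\begin{itemize}
\item Compute $d\eta^1$ and $d\eta^2$ using holomorphicity: $\partial_{\overline z}\varphi=0$ and $\partial_w\overline\psi=0$, in the appropriate mixed frames.
\item Extract $\eta_c$ from these.
\item Show that $d\eta_c=-\tfrac12\eta^1\wedge\eta^2$ holds if and only if \eqref{eq: Bochner} holds. Here one uses that $h$ is a Bers metric, so $K_h=-1$, which accounts for the constant $-2$ in \eqref{eq: Bochner}.
\end{itemize}
This is the main computational obstacle. The coordinates $z$ and $\overline w$ are not conjugate, so $\partial_{\overline z}$ and $\partial_w$ must be handled via the frames of Remark \ref{rmk: harmonicity in coords}. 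The cleanest route is as follows.
\begin{itemize}
\item Write $\eta_c=\eta_c^h+\tfrac14(\partial\text{-type}-\overline\partial\text{-type})\log U$ corrections, where $\eta_c^h$ is the connection form of $h$ for the frame $(\lambda^{1/2}dz,\lambda^{1/2}d\overline w)$.
\item Compute in the mixed splitting $\C T^*\Sigma=(T^*)^{1,0}_{c_1}\oplus(T^*)^{0,1}_{c_2}$.
\item Check first the case $q_1=\overline{q_2}=0$, where the statement reduces to the Liouville-type equation $\Delta_h\log U=2U-2$ for the conformal metric $Uh$.
\item Then treat the extra terms.
\end{itemize}

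Third, once $K_g=-1$, Proposition \ref{prop: metric to immersion} provides an equivariant admissible immersion $f$ with $f^*\langle\cdot,\cdot\rangle=g$, unique up to isometries of $\mathbb G$. Equivariance comes from the $\pi_1$-invariance of $g$; if needed, the holonomy is obtained by deforming to a positive metric through $U\to1$, $q_i\to0$.

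Then I would compare the decomposition \eqref{pullbackmetric} with \eqref{eq: pullback, complex}:
\begin{itemize}
\item the $(T^*)^{1,0}_{c_1}\otimes(T^*)^{1,0}_{c_1}$ component of $g$ is $q_1$;
\item the $(T^*)^{0,1}_{c_2}\otimes(T^*)^{0,1}_{c_2}$ component is $\overline{q_2}$.
\end{itemize}
So the Hopf differentials of $f$ are $q_1$ and $\overline{q_2}$, which are holomorphic. Since $\mathbb G$ has complex dimension $2$ and $f$ is admissible, Proposition \ref{prop: hopf} gives complex harmonicity.

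\textbf{Converse direction.} Given a complex harmonic $f$ with pullback \eqref{eq: pullback, complex}, Proposition \ref{prop: hopf} makes the Hopf differentials $q_1$ and $\overline{q_2}$ holomorphic. The metric $g=f^*\langle\cdot,\cdot\rangle$ has curvature $-1$, since $\mathbb G$ has constant curvature $-1$ and the immersion has full rank. The same curvature computation, read backwards, then yields \eqref{eq: Bochner}. If $f$ is admissible only on a dense open set, the identity extends by continuity.
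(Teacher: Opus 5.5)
Your plan is the paper's proof. You reduce, via Propositions \ref{prop: metric to immersion} and \ref{prop: hopf}, to showing that $K_g=-1$ if and only if \eqref{eq: Bochner} holds, by factoring $g=\eta^1\cdot\eta^2$ and applying Proposition \ref{prop: connection forms}. Your coframe is the paper's coframe $\bigl(dz+\tfrac{\overline\psi}{\lambda U}\,d\overline w,\ \varphi\,dz+\lambda U\,d\overline w\bigr)$ rescaled by $(\lambda U)^{\pm 1/2}$, which only shifts $\eta_c$ by the exact form $-\tfrac12\, d\log(\lambda U)$ and leaves $d\eta_c$ unchanged. The paper finds $\eta_c=Q\,dz$ with $Q=\tfrac{1}{\partial_w z}\partial_w\log(\lambda U)$, so $q_1,\overline{q_2}$ enter only through $\eta^1\wedge\eta^2=P\,dz\wedge d\overline w$.

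Your non-degeneracy caveat is legitimate, and the paper treats it the same way. It divides by $P=\lambda U-\tfrac{\varphi\overline\psi}{\lambda U}$ citing Lemma \ref{lem: det}, i.e.\ it tacitly assumes $g$ is a complex metric. That assumption is best read as a hypothesis of the theorem rather than something to restrict to.
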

Before giving the proof, we need a small lemma.
\begin{lem}\label{lem: det}
    For a complex bilinear form expressed as $g=q_1+ \left(U +\frac 1 U \frac{q_1\overline{q_2}}{ h^2}\right) h +\overline{q_2}$, $g$ is non-degenerate if and only if $\left(U -\frac 1 U \frac{q_1\overline{q_2}}{ h^2}\right)$ never vanishes.
\end{lem}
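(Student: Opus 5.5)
The plan is to compute the determinant of $g$ in a local frame adapted to the pair $(c_1,\overline{c_2})$, which reduces non-degeneracy to a pointwise algebraic condition. Take local holomorphic coordinates $z$ for $c_1$ and $\overline w$ for $\overline{c_2}$, so that $(dz,d\overline w)$ is a local coframe of $\C T^*\Sigma$; this holds because $c_1,\overline{c_2}$ induce the splitting $\C T^*\Sigma=(T^*\Sigma)^{1,0}_{c_1}\oplus (T^*\Sigma)^{0,1}_{c_2}$. In this coframe write $h=\lambda\, dz\, d\overline w$ with $\lambda$ nowhere zero, $q_1=\phee\, dz^2$ and $\overline{q_2}=\overline\psi\, d\overline w^2$. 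Then
\[
\frac{q_1\overline{q_2}}{h^2}=\frac{\phee\overline\psi}{\lambda^2}=:F .
\]

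Next, expand $g$ in the same coframe. We get $g=\phee\, dz^2+(U+F/U)\lambda\, dz\,d\overline w+\overline\psi\, d\overline w^2$. With the convention $dz\,d\overline w=\tfrac12(dz\otimes d\overline w+d\overline w\otimes dz)$, consistent with $g(E_1,E_2)=\tfrac12$ in Proposition \ref{prop: connection forms}, the Gram matrix of $g$ in the dual frame is
\[
\begin{pmatrix}\phee & \tfrac12 (U+F/U)\lambda\\ \tfrac12(U+F/U)\lambda & \overline\psi\end{pmatrix}.
\]
Its determinant is
\[
\phee\overline\psi-\tfrac14\lambda^2\Bigl(U+\frac FU\Bigr)^2=\lambda^2\Bigl(F-\tfrac14\bigl(U+\tfrac FU\bigr)^2\Bigr)=-\tfrac14\lambda^2\Bigl(U-\frac FU\Bigr)^2 .
\]
This uses the identity $(U+F/U)^2-4F=(U-F/U)^2$.

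Since $\lambda\neq 0$ everywhere, the determinant vanishes at a point exactly when $U-\frac1U\frac{q_1\overline{q_2}}{h^2}$ vanishes there. Non-degeneracy of a symmetric bilinear form is equivalent to the pointwise non-vanishing of this determinant, so the lemma follows.

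The only point needing care is the normalization convention for the symmetric product. A different factor in front of $dz\,d\overline w$ would break the perfect-square identity. So I would check the convention against the relation $g=\eta^1\cdot\eta^2$ with $g(E_1,E_2)=\tfrac12$ used earlier in the paper. Under that convention the computation above is purely routine.
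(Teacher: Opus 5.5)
Your proposal is correct and follows the paper's proof: compute the Gram matrix of $g$ in the coframe $(dz,d\overline w)$, show its determinant equals $-\frac{\lambda^2}{4}\bigl(U-\frac1U\frac{q_1\overline{q_2}}{h^2}\bigr)^2$, and use $\lambda\neq 0$. Your intermediate expression $\phee\overline\psi-\frac14\lambda^2(U+F/U)^2$ is the accurate one; the paper's displayed intermediate quantity is $-4$ times the determinant, though its final expression agrees with yours.
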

\begin{proof}
    Local holomorphic coordinates $z$ and $\overline{w}$ for $c_1$ and $\overline{c_2}$ respectively determine the coframe $(dz,d\overline{w})$ in which the matrix of $g$ is expressed $$\begin{pmatrix}
        q_1 & \frac{1}{2}\left(U +\frac 1 U \frac{q_1\overline{q_2}}{ h^2}\right)h \\
        \frac{1}{2}\left(U +\frac 1 U \frac{q_1\overline{q_2}}{ h^2}\right)h & \overline{q_2}
    \end{pmatrix}.$$
    The determinant is $\left(U +\frac 1 U \frac{q_1\overline{q_2}}{ h^2}\right)^2h^2-4q_1\overline{q_2}=-\frac{h^2}{4}\left(U -\frac 1 U \frac{q_1\overline{q_2}}{ h^2}\right)^2.$
\end{proof}
We will also repeatedly use the formula, for any $C^1$ function $v$,
\begin{equation}
\label{eq: differential in dz dw bar}
    dv= \frac {\partial_{w}v} {\partial_{w}z}  dz + \frac {\partial_{\overline {z}}v} {\partial_{\overline {z}}\overline {w}}  d\overline{w}.
\end{equation}

\begin{proof}[Proof of Theorem \ref{thm: Bochner equation and harmonicity}]
    By Propositions \ref{prop: metric to immersion}, Theorem \ref{thm: Bochner equation and harmonicity} is equivalent to the statement that $g$ has constant curvature $-1$ if and only if Equation \eqref{eq: Bochner} holds.

In local coordinates $z$ and $\overline{w}$ for $c_1$ and $\overline{c_2}$, write $h=\lambda dzd\overline{w}$, $q_1=\varphi dz^2$, $\overline{q_2}=\overline{\psi}d\overline{w}^2$. We recall from \cite[Sections 3.3 and 3.4]{ES_Chmaps} the local coordinate expression $$\Delta_h = \frac 1 {\partial_{\overline {z}} \overline{w}}\partial_{\overline {z}}(\frac{1}{\partial_{w}z} \partial_{w})$$ and that $\Delta_h \log \lambda=2$.
In our coordinates, define the functions 
\begin{align*}
P&= \lambda U- \frac{\phee \overline \psi}{\lambda}\frac 1 U,\\
Q&=\frac{1}{\partial_{w}z} \partial_{w} \log (\lambda U).
\end{align*}
Since $\Delta_h \log \lambda=2$, Equation \eqref{eq: Bochner} can be rewritten as 
\[
\Delta_h \log(\lambda U)= \Delta_h \log\lambda + \Delta_h\log U=2+2\left(\frac 1 \lambda P-1\right)= 2\frac P \lambda.
\]
By our expression for $\Delta_h$, Equation \eqref{eq: Bochner} is thus equivalent to
\[
\frac 1 {\partial_{\overline {z}} \overline{w}}\partial_{\overline {z}} Q= \frac 1 2 P.
\]
We will show that this last equation is equivalent to $K_g=-1$. To this end, we find an expression involving the curvature of $g$ and our other objects. Preparing to apply Proposition \ref{prop: connection forms}, we write $g=\eta^1\cdot \eta^2$, where 
\begin{align*}
  \eta^1&= dz+\overline{\psi} \frac 1 {\lambda U} d\overline {w},\\
    \eta^2&= \phee dz+\lambda Ud\overline {w}.
\end{align*}
By a straightforward calculation, using 
Equation \eqref{eq: differential in dz dw bar} where needed, we get:
\begin{align*}
    \eta^1\wedge \eta^2&= P dz\wedge d\overline {w}\\
    d\eta^1&=d(\frac 1 {\lambda U})\wedge  \overline{\psi}d\overline {w} = - \frac {\overline{\psi}} {\lambda U} d (\log(\lambda U))\wedge  d\overline{w}= - \frac {\overline{\psi}} {\lambda U}Q dz\wedge d\overline{w}=- \overline \psi \frac{Q}{\lambda UP} \eta^1\wedge \eta^2\\
     d\eta^2&=d(\lambda U)\wedge d\overline {w}= \lambda U d(\log(\lambda U))\wedge d\overline {w}=  \lambda UQ dz\wedge d\overline{w}=\frac{\lambda UQ}P \eta^1\wedge \eta^2,
\end{align*}
where the division by $P$ is permitted thanks to Lemma \ref{lem: det}. Now let $\eta_c$ be the connection $1$-form associated with $\{\eta^1,\eta^2\}$ via Proposition \ref{prop: connection forms}. According to the definition of $\eta_c$,
\[
\eta_c= \frac{\lambda UQ}P \eta^1 - \overline \psi \frac{Q}{\lambda UP}\eta^2=
\frac{\lambda UQ} P dz +\overline \psi \frac Q P d\overline {w} - \frac{\phee\overline \psi Q}{\lambda U P}dz-  \overline \psi \frac Q P d\overline {w}=Q dz.
\]
{As well, by Equation \eqref{eq: differential in dz dw bar} again, 
$$d\eta_c = -\frac 1 {\partial_{\overline {z}} \overline{w}}\partial_{\overline {z}} Q dz\wedge d\overline w.$$
}
Hence, by Proposition \ref{prop: connection forms},
\[ \frac 12 K_g \cdot P dz\wedge d\overline {w}= \frac 12 K_g \eta^1\wedge \eta^2= d\eta_c= -{\frac 1 {\partial_{\overline {z}} \overline{w}}}\partial_{\overline {z}}Q dz\wedge d\overline {w} ,\]
which shows that Equation \eqref{eq: Bochner} is equivalent to $K_g=-1$.
\end{proof}
\begin{remark}
    Both Theorem \ref{thm: Bochner equation and harmonicity} and \cite[Theorem C]{ES_Chmaps} (see the beginning of the section) produce complex harmonic maps from (\ref{eq: Bochner}), but it's not immediately apparent that the complex harmonic maps are isometric. To see that they are, one just has to observe that the pullback metrics are the same. To obtain the pullback metric coming from \cite[Theorem C]{ES_Chmaps}, see \cite[Theorem 4.11]{ES_Chmaps} and substitute $\Pi_+ \circ df$ (from the current paper) for $\phi_1$ (from \cite{ES_Chmaps}), $\Pi_-\circ df$ (from the current paper) for $\overline{\phi_2}^{\mathcal{I}}$ (from \cite{ES_Chmaps}), and $\inners$ (from the current paper) for $\frac{1}{2}\textrm{tr}(\cdot, \cdot)$ (from \cite{ES_Chmaps}).
\end{remark}

\subsection{Examples}\label{subsec: examples}
Any (ordinary) harmonic map to a hyperbolic surface lifts to an equivariant harmonic map to the hyperbolic plane $\mathbb{H}^2$. The first examples of complex harmonic maps to $\mathbb{G}$ are ordinary harmonic maps to $\mathbb{H}^2$ post-composed with the inclusion $\mathbb{H}^2\to\mathbb{G}$. One can then take deformations thereof, as we'll do in the next section. More examples are below. 

\subsubsection{Minimal immersions in $\mathbb{H}^3$}\label{subsubsec: mimimal immersions} Input the data $c:=c_1=c_2$, $q:=q_1=-q_2$ into Theorem \ref{thm: Bochner equation and harmonicity}. The equation (\ref{eq: Bochner}) agrees with the Gauss equation for finding a minimal immersion $\widetilde{\Sigma}\to \mathbb{H}^3$ with first fundamental form $\Is$ in the conformal class specified by $c$, and shape operator ${B}$ inducing second fundamental form $\II:=\Is(\mathtt B\cdot,\cdot) = \frac 1 2 (q +\overline{q})$ (see \cite{Uh}). Indeed, let $h$ be the Riemannian hyperbolic metric in the conformal class of $c$, and suppose that, for a real-valued function $U$,  $\mathrm I=U \cdot h$ and $\II= \frac 1 2 (q +\overline{q})$ provide a minimal immersion $\sigma$. Then $U$ solves both the Gauss equation and (\ref{eq: Bochner}). By Theorem \ref{thm: Bochner equation and harmonicity}, $U$ also engenders a complex harmonic map to $\mathbb{G}$. We explain that this complex harmonic map is related to $\sigma$ via a Gauss map.

For an oriented immersion $\sigma: \widetilde \Sigma\to \Hyp^3$ with normal vector field $N(p)$, we can define a normal map from $\widetilde{\Sigma}$ to the unit tangent bundle $T^1{\Hyp^3}$ by $p\mapsto (\sigma(p),N(p))$. Under the interpretation of $\mathbb{G}$ as the space of oriented geodesics, we have the Gauss map
$\widehat\sigma_0: \widetilde \Sigma\to \mathbb G$ that assigns to each $p$ the oriented line determined by $(\sigma(p),N(p))$. We can show without using Theorem \ref{thm: Bochner equation and harmonicity} that $\widehat{\sigma}$ is complex harmonic. From \cite[Proposition 4.12]{BEE}, for our minimal map $\sigma,$ the pull-back through the Gauss map $\widehat \sigma$ of the holomorphic Riemannian metric of $\mathbb G$ is \[
\widehat{\sigma}^*\inners= \Is((id+iJ\mathtt B)\cdot,(id+iJ\mathtt B)\cdot)= \Is - \Is(\mathtt B^2\cdot,\cdot)+ q-\overline q,
\]
where $J$ is the almost complex structure associated with $c$. By \cite[Proposition 4.12]{BEE}, $\widehat \sigma^*\inners$ is a complex metric if and only if the curvature of $\Is$ is nowhere zero. By the Gauss equation, the curvature of $\Is$ is less than $-1$ \cite[Theorem 4.2]{Uh}, and hence $\widehat \sigma^*\inners$ is indeed a complex metric. Consequently, $\widehat \sigma$ is admissible. Since $\mathtt B$ is traceless, $\Is-\Is(\mathtt B^2\cdot, \cdot)$ is a conformal multiple of $\Is$, and thus the holomorphicity of $q$ ensures that $\widehat{\sigma}$ is harmonic.

To relate $\widehat{\sigma}$ with the complex harmonic map from Theorem \ref{thm: Bochner equation and harmonicity}, say, $f$, note that if $h$ is the conformal hyperbolic metric on $(\Sigma,c)$, then $$\Is-\Is(\mathtt{B}^2\cdot,\cdot) = (U + \frac 1 U|q|_h^2)h$$ (see \cite{Uh}). Thus, the two maps have the same intrinsic data, and Proposition \ref{prop: metric to immersion} implies that $\hat{\sigma}$ and $f$ are isometrically identified. 

\subsubsection{Developments of $\mathbb{CP}^1$ structures}
\label{subsubsection: CP1 structures}
Let $F:(\widetilde{\Sigma},c)\to \mathbb{CP}^1$ be an equivariant holomorphic local diffeomorphism. Equivalently, $F$ is the developing map of a complex projective structure on $\Sigma$. For any other such map $\hat{F}$ on $(\widetilde{\Sigma},c)$, the osculation map $\Psi:\widetilde{\Sigma}\to \textrm{PSL}(2,\C)$ is the unique map determined by the property that, at every $z\in \widetilde{\Sigma}$, $\hat{F}(z)$ and $\Psi(z)(F(z))$ have the same holomorphic $2$-jet (see \cite{ElE} for more details).

Let $f=(f_1,\overline{f_2}):\widetilde{\Sigma}\to \mathbb{CP}^1\times \mathbb{CP}^1\backslash \Delta\simeq \mathbb{G}$ be an equivariant admissible immersion with pullback complex metric $g=f^*\inners$ and assume that $f_1$ is a local diffeomorphism. The map $f_1$ becomes holomorphic once we equip $\Sigma$ with the pullback complex structure $c_1$. Let $S(f_1)$ be the Schwarzian derivative of $f_1$. Let $f_1^q: \widetilde{\Sigma}\to \mathbb{CP}^1$ be another local diffeomorphism with the same pullback complex structure $c_1$ and such that $S(f_1^q)=S(f_1)+q$ for some holomorphic quadratic differential $q$. Denoting by $\Psi$ the osculation map for $f_1$ and $f_1^q$, define $\overline{f}_2^q=\Psi(\cdot)(\overline{f_2})$. It is proved in \cite[Section 4.2]{ElE} that the map $$f^q=(f_1^q,\overline{f}_2^q):\widetilde{\Sigma}\to \mathbb{CP}^1\times \mathbb{CP}^1\backslash \Delta$$ is an admissible immersion with pullback metric $g-2q$. 

If, in addition, $\overline{f_2}$ is an orientation-reversing local diffeomorphism, then $g$ is a positive complex metric inducing $(c_1, \overline{c_2})$, with $\overline{c_2}$ the pullback complex structure via $\overline{f_2}$. Observe that, in this case, by Proposition \ref{prop: hopf}, the map $f\colon (\widetilde \Sigma, c_1, \overline{c_2}) \to \mathbb G$ is complex harmonic with Hopf differentials $q_1=q$ and $\overline {q_2}=0$. If we want to see this in the context of Theorem \ref{thm: Bochner equation and harmonicity}, just note that, for pairs of Hopf differentials of the form $(q_1, \overline{q_2})=(-2q, 0)$, the function $U\equiv 1$ solves (\ref{eq: Bochner}), hence  $f^*{\inners}-2q$ is the pullback metric of a complex harmonic immersion.

Let's now assume $\Sigma=S$ closed, and let $g$ be the Bers metric $h=\hpair$. By definition of the Bers embedding relative to $\{[c_1]\}\times\mathcal T(\overline S)$, say $\overline{\mathcal Be}_{c_1}$, $f_1$ is the developing map of a complex projective structure with Schwarzian $$S(f_1)=\overline{\mathcal Be}_{c_1}([\overline{c_2}]).$$ Writing $f^q=(f_1^q,\overline{f_2}^q)$, since the pullback metric determines the admissible immersion, $f_1^q$ is the developing map of a complex projective structure with Schwarzian $$S(f_1^q)=\overline{\mathcal Be}_{c_1}([\overline{c_2}])+q.$$ Putting it all together, we can think of $f^q$, or equivalently $h-2q$, as the data of the development $f_1^q$ of a complex projective structure with underlying complex structure $c_1$ and Schwarzian determined by $\overline{c_2}$ and $q$, together with a ``transverse" map $\overline{f_2}^q$ to $\mathbb{CP}^1$ that's equivariant for the same holonomy and that makes the map $(f_1^q,\overline{f_2}^q)$ complex harmonic. Note that all of the discussion above goes through analogously if we instead work with and deform a local diffeomorphism $\overline{f_2}$, arriving at metrics of the form $h+\overline{q}$, where $q$ is $c_2$-holomorphic.

\subsubsection{Mappings into complex geodesics}
In contrast to the admissible complex harmonic maps produced from Theorem \ref{thm: Bochner equation and harmonicity}, here we consider the most degenerate complex harmonic maps. Degenerate harmonic maps into hyperbolic spaces are just maps onto geodesics. The right analogy in the complex world is maps onto complex geodesics. 

Recall that every complex geodesic is isometric to either $\C$ with the $0$-metric (that is, the metric $\inners$ is null on the geodesic), or to $(\C^*, \frac{dz^2}{z^2})$. 

We begin with copies of $\C$. Although $\inners$ degenerates, it still makes sense to discuss complex harmonic maps, since the equation depends only on the Levi-Civita connection of $\mathbb{G}$. Since we're mapping into a complex geodesic, the Levi-Civita connection restricts to the trivial one, i.e., the affine structure is the same as that of $(\C,dz^2).$ Thus, the complex harmonic map equation is the scalar complex harmonic map equation $$d(\Pi_+ df)=0,$$ or $\Delta_h f=0$, where $h$ a positive complex metric inducing the specified complex structures. Let $(c_1,\overline{c_2})$ be the complex structures on the source surface. Viewing $df$ as a $1$-form and decomposing $$df=\Pi_+ df + \Pi_- df,$$ the expressions (\ref{eq: harmonicity in coords 1}) and (\ref{eq: harmonicity in coords 2}) say that $\Pi_+ df$ is a $c_1$-holomorphic $1$-form, and $\Pi_- df$ is $\overline{c_2}$-holomorphic $1$-form. By integration, any complex harmonic map to a degenerate complex geodesic (or, to $(\C,dz^2)$) identifies with a function of the form $$u+\overline{v},$$ where $u$ is $c_1$-holomorphic and $\overline{v}$ is $\overline{c_2}$-holomorphic. If $c_1=c_2$, then $f$ is a real harmonic map, landing on a real geodesic, if and only if $u=v$.

For the non-degenerate geodesics, any complex harmonic map to $(\C^*,\frac{dz^2}{z^2})$ or a copy thereof locally lifts to a complex harmonic map to $(\C,dz^2)$. Since the exponential map $z\mapsto e^z$ intertwines the affine structures, upon identification, we locally can write any such map as $$\textrm{exp}(u+\overline{v}),$$ where $u$ is $c_1$-holomorphic and $\overline{v}$ is $\overline{c_2}$-holomorphic.

\subsubsection{Exotic complex harmonic maps}
Ordinary harmonic maps, and complex harmonic maps close to the real locus, are quite rigid. Although we don't pursue it deeply in this paper, there are lots of complex harmonic maps that are very ``far" from the real locus, and which can be quite wild.
To illustrate this phenomenon, we consider complex harmonic maps from $\C$ to $\mathbb{G}$, where we have explicit formulas. For ordinary harmonic maps from $\C$ to $\mathbb{H}^2$ and any polynomial quadratic differential $q$ on $\C$, there exists a unique harmonic diffeomorphism (onto its image), see \cite{WanAu1994}, \cite{LiVortexEquations}. There is no non-constant harmonic map from $\C\to \mathbb{H}^2$ with $q=0$, by the Liouville theorem for holomorphic functions. In contrast, we have the following.
\begin{thm}\label{thm: exotic harmonic maps}
    There exist infinite dimensional families of non-constant complex harmonic maps from $\C$ to $\mathbb{G}$ with Hopf differentials $(q_1,0)$, $(0,\overline{q_2})$, and $(0,0).$
\end{thm}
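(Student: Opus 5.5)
Since there is no compactness and the source is $\C$, I will build examples with explicit formulas in the model $\mathbb G=\CP^1\times\CP^1\backslash\Delta$ with metric $-\frac{4}{(z_1-z_2)^2}dz_1 dz_2$. The complex structures on $\C$ will be of the form $(c_1,\overline{c_2})$ given by the standard one and a pulled-back conjugate one. I will use Proposition \ref{prop: hopf}, whose converse direction reduces complex harmonicity of an admissible map (admissible on an open dense set) to holomorphicity of the Hopf differentials. Take $f=(f_1,\overline{f_2})$ with $f_1$ a local diffeomorphism that is $c_1$-holomorphic and $\overline{f_2}$ an orientation-reversing local diffeomorphism that is $\overline{c_2}$-holomorphic. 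Then $\Pi_+df$ is tangent to the first factor and $\Pi_-df$ to the second. Since each factor direction is isotropic, $q_1=\langle\Pi_+df,\Pi_+df\rangle=0$ and $\overline{q_2}=0$. This is the same mechanism as in Section \ref{subsubsection: CP1 structures} with $q=0$.

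\textbf{Case $(0,0)$.} Let $f_1=F$ be any holomorphic local diffeomorphism $\C\to\CP^1$ (for instance $e^{z}$, or $e^{P(z)}$ with $P'$ nowhere zero, or any entire function with nonvanishing derivative). Let $\overline{f_2}(z)=\overline{G(z)}$ with $G$ another such entire function, subject to $F(z)\neq \overline{G(z)}$ for all $z$. Equip $\C$ with $c_1$ the standard structure and $\overline{c_2}$ the structure pulled back by $\overline{f_2}$, which is just the conjugate pullback of $G$. The pullback $f^*\inners=-4\frac{F'\overline{G'}}{(F-\overline G)^2}dz\,d\overline z$ is non-degenerate, so $f$ is admissible. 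By Proposition \ref{prop: hopf} it is complex harmonic with both Hopf differentials zero.

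To meet the condition $F\neq\overline G$, choose $F$ with image in the upper half plane, such as the composition of $e^{z}$ with a Möbius map, or $F=i e^{i\epsilon z}$ restricted appropriately. This does not quite work on all of $\C$, so I will prefer instead to take $F$ with image avoiding a set and $G$ with image in the complementary side. The cleanest choice is $F(z)=\exp(a(z))$ and $\overline{G}=-\exp(\overline{b(z)})$ with $a,b$ entire and real parts controlled. Then $F=\overline G$ requires $e^{a}=-e^{\overline b}$, and I can avoid this by taking, e.g., $a=z+\phi(z)$ and $b=z+\psi(z)$ with $\phi,\psi$ small perturbations.

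I expect this transversality $F(z)\neq\overline{G(z)}$ to be the main obstacle. A more robust fix is to pick $F,\overline G$ taking values in disjoint domains, e.g.\ $F=$ the composition of a nonvanishing-derivative entire function with a map into the disk. No nonconstant entire function lands in a disk, so instead I map into $\CP^1$ minus two points via $\exp$. Then I use the freedom of a degenerate-direction variation: $\overline{f_2}=F+\overline{\epsilon H}$ with $H$ bounded is not entire. Realistically I will settle for $F(z)=e^{z}$ and $\overline{G}=e^{\overline{z}+\pi i}+ $ perturbation, and check directly that $e^{z}=-e^{\overline z}$ forces $\mathrm{Im}z=\pi/2+k\pi$. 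At those points I then use the open-dense version of Proposition \ref{prop: hopf} after redefining the domain, or choose a different pairing. Infinite dimensionality comes from the freedom in the entire functions $a,b$.

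\textbf{Case $(q_1,0)$.} Apply the osculation deformation of Section \ref{subsubsection: CP1 structures}. Starting from $f$ above, replace $f_1$ by $f_1^q$ with Schwarzian $S(f_1)+q$; such $f_1^q$ exists on the simply connected $\C$ by solving the Schwarzian ODE. Then set $\overline{f_2}^q=\Psi(\cdot)\overline{f_2}$. The resulting map is admissible with pullback $g-2q$, hence complex harmonic with Hopf differentials $(-2q,0)$. The $(0,\overline{q_2})$ case is symmetric, deforming $\overline{f_2}$ instead. The families are infinite dimensional since $q$ ranges over all entire quadratic differentials and the base map already has infinitely many parameters.
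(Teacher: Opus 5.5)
Your strategy matches the paper's. For $(0,0)$ you take $f=(F,\overline{G})$ with $F$ holomorphic and $\overline{G}$ antiholomorphic local diffeomorphisms, so $\Pi_+df$ and $\Pi_-df$ are tangent to the isotropic factor directions and Proposition~\ref{prop: hopf} gives complex harmonicity. You then deform by an entire quadratic differential. The paper does this deformation at the level of metrics, $F^*\inners+q_1$, via \cite[Proposition 3.9]{ElE} and Proposition~\ref{prop: metric to immersion}. Your osculation construction is the map-level form of the same fact, and that part is fine: note that $f_1^q(z)=\Psi(z)f_1(z)\neq\Psi(z)\overline{f_2}(z)=\overline{f_2}^q(z)$, so $f^q$ stays in $\mathbb G$.

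The genuine gap is that you never produce the base map. Everything rests on exhibiting entire $F,G$ with $F',G'$ nowhere zero and $F(z)\neq\overline{G(z)}$ for \emph{every} $z\in\C$, and none of your attempts does this.
\begin{itemize}
\item The pair $e^{z}$, $-e^{\overline z}$ collides on the lines $\mathrm{Im}\,z=\pi/2+k\pi$, as you note yourself.
\item The ``small perturbations'' are never specified, and whether they help depends entirely on their direction. For instance, adding a real constant $r$ to $-e^{\overline z}$ leaves $F-\overline G=2e^{x}\cos y-r$ real-valued, so it still vanishes along a curve.
\item The fallback via the open-dense version of Proposition~\ref{prop: hopf} does not apply. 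That statement tolerates failure of admissibility on a small set. But where $F=\overline G$ the map is not defined into $\mathbb G$ at all: it hits $\Delta$, where $-\frac{4}{(z_1-z_2)^2}dz_1dz_2$ blows up.
\item ``Redefining the domain'' replaces $\C$ by a union of strips.
\end{itemize}
Since your $(q_1,0)$ and $(0,\overline{q_2})$ families are deformations of this base map, none of the three cases is actually established.

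The missing observation is to shift by a constant transverse to the range of $F-\overline{F}$. Take any entire $\phi$ with $\phi'$ nowhere zero, e.g.\ $\phi=\int_0^z e^{p(\xi)}d\xi$ with $p$ entire. Then $z\mapsto(\phi(z),\overline{\phi(z)}+1)$ lands in $\mathbb G$, because $\phi-\overline\phi-1=2i\,\mathrm{Im}\,\phi-1$ has real part $-1$. It is an admissible (indeed positive) immersion with zero Hopf differentials. The freedom in $p$ gives the infinite-dimensional $(0,0)$ family that your phrase ``freedom in $a,b$'' only asserts. In your own exponential setting, $e^{z}$ together with $-e^{\overline z}-i$ already works, since $e^{z}+e^{\overline z}=2e^{x}\cos y$ is real.

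Your worry was legitimate. The explicit example printed after the paper's lemma, $e^{z}$ with $e^{\overline z}+i$, has exactly this defect. Since $e^{z}-e^{\overline z}=2ie^{x}\sin y$ is purely imaginary, $e^{z}-e^{\overline z}-i$ vanishes on the curve $2e^{x}\sin y=1$; the added constant there should be real.
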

The theorem is more or less immediate from the existence of hyperbolic complex metrics on $\C$, which is independently interesting to note.
\begin{lem} 
    Let $f_1,f_2:\C\to \C$ be entire functions such that $f_1'(z)f_2'(\overline{z})$ and $(f_1(z)-f_2(\overline{z}))$ both have no zeros, and define $F=(f_1,f_2):\C\to \mathbb{CP}^1\times \mathbb{CP}^1\backslash\Delta$. Then $$F^*\inners = \frac{-4f_1'(z)f_2'(\overline{z})}{(f_1(z)-f_2(\overline{z}))^2}|dz|^2$$ defines a positive complex metric of constant curvature $-1$.
\end{lem}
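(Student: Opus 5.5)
The plan is to read $F$ as the map $p\mapsto (f_1(z),f_2(\overline z))$ into $\mathbb{G}=\CP^1\times\CP^1\backslash\Delta$ and verify the three required properties directly. First, the formula. In the affine chart on both factors the holomorphic Riemannian metric is $-\frac{4}{(z_1-z_2)^2}dz_1\cdot dz_2$. Pulling back with $z_1=f_1(z)$ and $z_2=f_2(\overline z)$ gives $dz_1=f_1'(z)\,dz$ and $dz_2=f_2'(\overline z)\,d\overline z$, so
\[
F^*\inners=\frac{-4f_1'(z)f_2'(\overline z)}{(f_1(z)-f_2(\overline z))^2}\,dz\cdot d\overline z ,
\]
which is the stated expression with $|dz|^2$ read as $dz\cdot d\overline z$. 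This is well defined because $f_1(z)\neq f_2(\overline z)$; both values are finite, and they are distinct by hypothesis, so $F$ indeed lands in $\mathbb{G}$.

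Next, non-degeneracy and positivity. In the coframe $(dz,d\overline z)$ the bilinear form has matrix with zero diagonal and off-diagonal entry $\frac12\lambda$, where $\lambda=-4f_1'f_2'/(f_1-f_2)^2$. Its determinant is $-\lambda^2/4$, which never vanishes because $f_1'(z)f_2'(\overline z)$ has no zeros, so the form is a complex metric. The isotropic lines are $\mathrm{Span}(\partial_z)$ and $\mathrm{Span}(\partial_{\overline z})$. These are antipodal and not real, so they lie in distinct components of $\mathbb P(\C T\Sigma)\backslash\mathbb PT\Sigma$, exactly as for the complexification of a Riemannian metric, and Definition \ref{def: positive} is satisfied. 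Equivalently, $f_1$ is an orientation-preserving local diffeomorphism and $z\mapsto f_2(\overline z)$ is an orientation-reversing one, and one could cite the characterization from \cite[Proposition 4.9]{BEE} recalled after Proposition \ref{prop: metric to immersion}. The induced pair of complex structures is $(c,\overline c)$ with $c$ the standard structure.

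Curvature. The map $F$ is an admissible immersion: its complexified differential sends $\partial_z\mapsto f_1'\partial_{z_1}$ and $\partial_{\overline z}\mapsto f_2'\partial_{z_2}$, which is an isomorphism onto $T\mathbb G$. Therefore $F^*\inners$ is the pullback of the curvature $-1$ holomorphic Riemannian metric by a totally real immersion, and Proposition \ref{prop: metric to immersion} (the "only if" direction, which is local and needs no equivariance since $\C$ is simply connected) gives $K=-1$.

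As a sanity check, or in case one prefers not to rely on that direction, I would compute directly with Proposition \ref{prop: connection forms}. Take $\eta^1=\frac{2f_1'}{f_1-f_2}dz$ and $\eta^2=\frac{-2f_2'}{f_1-f_2}d\overline z$, and solve for $\eta_c$. Because $\partial_{\overline z}f_1=0=\partial_z(f_2(\overline z))$, the derivatives reduce to those of $\log(f_1-f_2)$, and one checks that $d\eta_c=-\frac12\eta^1\wedge\eta^2$. This last computation is the only potentially fiddly step, and the main thing to watch is sign conventions.
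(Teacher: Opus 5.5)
Your proposal is correct and is the paper's own argument: the non-vanishing hypotheses make $F(z)=(f_1(z),f_2(\overline z))$ an admissible immersion into $\mathbb G$, so Proposition \ref{prop: metric to immersion} gives curvature $-1$, and positivity holds because the isotropic lines are $\mathrm{Span}(\partial_z)$ and $\mathrm{Span}(\partial_{\overline z})$ (equivalently, the two projections are local diffeomorphisms of opposite orientation). Your optional direct check via Proposition \ref{prop: connection forms} also works: one gets $A=\tfrac12$, $B=-\tfrac12$, $\eta_c=\tfrac12(\eta^2-\eta^1)$ and $d\eta_c=-\tfrac12\,\eta^1\wedge\eta^2$, hence $K=-1$.
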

\begin{proof}
By our non-vanishing assumptions, $F$ is an admissible immersion, hence $F^*\inners$ is a positive complex metric of constant curvature $-1$.
\end{proof}
\begin{proof}[Proof of Theorem \ref{thm: exotic harmonic maps}]
Any $F$ as above gives a complex harmonic map with Hopf differentials $(0,0)$. By \cite[Proposition 3.9]{ElE}, any metric of the form $F^*\inners + q_1$ or $F^*\inners +\overline{q_2}$ is non-degenerate with curvature $-1$, and hence by Proposition \ref{prop: metric to immersion} determines an admissible immersion to $\mathbb{G}$, which is harmonic by Proposition \ref{prop: hopf}.
\end{proof}
For an explicit example to feed into the results above, we can take $f_1(z)=e^z$ and $f_2(\overline{z}) = e^{\overline{z}}+i$. Then clearly both $f_1'$ and $f_2'$ never vanish, and $(f_1-f_2)(z) = 2e^{\textrm{Re}z}\sin (\textrm{Im} z)+i$, which also never vanishes.

\subsection{The $\C^*$-action and $\mathbb{CP}^1$ structures}\label{subsec: C* action}
There is a $\C^*$-action on the set of complex harmonic maps provided by Theorem \ref{thm: Bochner equation and harmonicity}: for $\zeta\in \C^*$, change $(q_1,\overline{q_2})$ to $(\zeta^2 q_1,\zeta^{-2}\overline{q_2})$ and leave $U$ as is. This gives a new harmonic map because the Bochner equation for $(q_1,\overline{q_2})$ is preserved. 
We use $\zeta^2$ instead of $\zeta$ to make this compatible with the $\C^*$-action on complex harmonic bundles under the bi-Hitchin map (see \cite[Section 4]{ES_Chmaps}). Here we give a geometric interpretation to the $\C^*$-action; this will be used in both Theorems \ref{thm: main} and \ref{thm: twistor lines}.

Let $c$ be a complex structure on $\Sigma$ with hyperbolic metric $h$. Given a complex harmonic map $f=(f_1,\overline{f_2}):(\widetilde{\Sigma},c,\overline{c})\to \mathbb{G}$ with Bochner equation data $(q_1, \overline{q_2}, U)$,
we define its \textbf{Beltrami forms} by $$\mu_1= \frac{q_1}{Uh}, \hspace{1mm} \overline{\mu_2}= \frac{\overline{q_2}}{Uh}.$$ 
When $f$ factors through a map to $\mathbb{H}^2$, $\mu_1$ and $\overline{\mu_2}$ are the ordinary Beltrami form and its conjugate.

By Theorem \ref{thm: Bochner equation and harmonicity} and \cite[Theorem 5.1]{BEE}, there is a unique holomorphic association $\zeta\mapsto f^\zeta=(f_1^\zeta,\overline{f_2}^\zeta)$, where $f^\zeta$ is a new complex harmonic map associated with data $(\zeta^2 q_1, \zeta^{-2}\overline{q_2}, U)$ and $f=f^1$. Presumably there is a version of the result below for $c_1\neq c_2$, but we don't pursue this here.
\begin{thm}\label{thm: A and B}
    Let $f=(f_1,\overline{f_2}):(\widetilde{\Sigma},c,\overline{c})\to \mathbb{G}$ be a complex harmonic map such that $g=f^*\inners$ is positive. Assume $f_1$ and $\overline {f_2}$ respectively preserve and invert the orientations as maps $\widetilde \Sigma\to \CP^1$. Define subsets $A,B\subset \C^*$ by $$A=\{\zeta \in \C^* : |\zeta|^2> ||\mu_2||_\infty\}, \hspace{1mm} B= \{\zeta \in \C^* : ||\mu_1||_\infty<|\zeta|^{-2}\}.$$ Then, for $\zeta \in A$, $f_1^\zeta$ is an orientation-preserving local diffeomorphism, and, for $\zeta\in B$, $\overline{f_2}^\zeta$ is an orientation reversing local diffeomorphism.
\end{thm}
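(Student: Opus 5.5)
The plan is to read off the isotropic directions of the deformed pullback metric explicitly, and then match them with the kernels of $d f_1^\zeta$ and $d\overline{f_2}^\zeta$.

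\textbf{Step 1: the deformed metric.} Since $c_1=c_2=c$, we work in a local $c$-holomorphic coordinate $z$ and write $h=\lambda\,|dz|^2$, $q_1=\varphi\,dz^2$, $\overline{q_2}=\overline\psi\,d\overline z^2$. By Theorem \ref{thm: Bochner equation and harmonicity}, the map $f^\zeta$ has pullback metric
$$g^\zeta=\zeta^2 q_1+\Big(U+\frac1U\frac{q_1\overline{q_2}}{h^2}\Big)h+\zeta^{-2}\overline{q_2}.$$
A direct expansion gives the factorization
$$g^\zeta=U\lambda\,\bigl(dz+\zeta^{-2}\overline{\mu_2}\,d\overline z\bigr)\cdot\bigl(d\overline z+\zeta^{2}\mu_1\,dz\bigr),$$
where $\mu_1,\overline{\mu_2}$ are the Beltrami forms written as functions in this coordinate. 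Call these two factors $\eta^1_\zeta$ and $\eta^2_\zeta$. The determinant of $g^\zeta$ is independent of $\zeta$, because $U-\frac1U\frac{q_1\overline{q_2}}{h^2}$ does not involve $\zeta$ (Lemma \ref{lem: det}). Hence every $g^\zeta$ is non-degenerate, every $f^\zeta$ is an admissible immersion, and the isotropic lines $\ker\eta^1_\zeta$ and $\ker\eta^2_\zeta$ are distinct at every point.

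\textbf{Step 2: where the isotropic lines sit.} The form $dz+a\,d\overline z$ has a real vector in its kernel exactly when $|a|=1$. When $|a|<1$ it is a $(1,0)$-form for an orientation-compatible complex structure, so its kernel lies in the ``outer'' component of $\mathbb P(\C T_p\Sigma)\setminus\mathbb P(T_p\Sigma)$. Applying this pointwise:
\begin{itemize}
\item $\ker\eta^1_\zeta$ lies in the outer disk whenever $|\zeta|^{2}>|\mu_2(p)|$;
\item $\ker\eta^2_\zeta$ lies in the inner disk whenever $|\zeta|^{-2}>|\mu_1(p)|$.
\end{itemize}
These are exactly the pointwise versions of the conditions $\zeta\in A$ and $\zeta\in B$.

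\textbf{Step 3: matching the isotropic lines with the two projections.} In the model $\inners=-4\,dz_1dz_2/(z_1-z_2)^2$, the isotropic lines of $g^\zeta$ are the kernels of the complexified differentials $d f_1^\zeta$ and $d\overline{f_2}^\zeta$. Admissibility gives that $df^\zeta$ is an isomorphism $\C T_p\widetilde\Sigma\to T\mathbb G$, so each of $df_1^\zeta$ and $d\overline{f_2}^\zeta$ has rank exactly one and its kernel is one of the two isotropic lines. The main point is to show that $\ker df_1^\zeta=\ker\eta^1_\zeta$ for every $\zeta\in A$, and not only at $\zeta=1$.

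\begin{itemize}
\item At $\zeta=1$: positivity of $g$ together with the orientation hypotheses forces $\ker df_1=\ker\eta^1_1$, placed in the outer disk. This also gives $|\mu_2|<1$ and $|\mu_1|<1$ pointwise.
\item For general $\zeta$: fix $p$. The family $\zeta\mapsto f^\zeta$ is holomorphic, hence continuous in $\zeta$ together with its first derivatives. So $\ker df_1^\zeta(p)$ varies continuously in $\zeta$, and it always equals one of the two distinct continuous lines $\ker\eta^1_\zeta(p)$, $\ker\eta^2_\zeta(p)$. By connectedness of $\C^*$ it therefore equals $\ker\eta^1_\zeta(p)$ for all $\zeta$. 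The same argument gives $\ker d\overline{f_2}^\zeta=\ker\eta^2_\zeta$.
\end{itemize}

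\textbf{Step 4: conclusion.} For $\zeta\in A$ we have $|\zeta|^2>\|\mu_2\|_\infty\ge|\mu_2(p)|$ at every $p$. By Step 2, $\ker df_1^\zeta$ then contains no real vector, so $df_1^\zeta$ is injective on $T_p\Sigma$ and $f_1^\zeta$ is a local diffeomorphism. Since its kernel lies in the same disk as at $\zeta=1$, it is orientation preserving. The case $\zeta\in B$ for $\overline{f_2}^\zeta$ is symmetric, using $\eta^2_\zeta$.

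The step I expect to need the most care is Step 3. It requires checking that the normalization in \cite[Theorem 5.1]{BEE} really makes $f^\zeta$ depend continuously on $\zeta$ at the level of first derivatives, and fixing carefully which projection corresponds to which disk at $\zeta=1$.
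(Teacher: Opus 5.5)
Your proposal is correct and is essentially the paper's proof: factor $g^\zeta=U\lambda\,\bigl(dz+\zeta^{-2}\overline{\mu_2}\,d\overline z\bigr)\bigl(d\overline z+\zeta^{2}\mu_1\,dz\bigr)$, identify the two isotropic lines with $\ker df_1^\zeta$ and $\ker d\overline{f_2}^\zeta$, propagate this matching from $\zeta=1$, and read off reality and orientation of the kernel from $|\zeta^{-2}\mu_2|<1$ (resp.\ $|\zeta^{2}\mu_1|<1$). The only difference is in the propagation step: you use a clopen argument on $\C^*$, valid because $\det g^\zeta$ is independent of $\zeta$ so the two lines never collide, whereas the paper matches the lines for $\zeta$ near $1$ and then extends by holomorphicity; note also that, exactly as in the paper, the matching at $\zeta=1$ tacitly assumes $U$ is the root with $|\mu_1|,|\mu_2|<1$, since when $q_1\overline{q_2}$ is nowhere zero, $U'=q_1\overline{q_2}/(h^2U)$ also solves the Bochner equation and gives the same metric but has $|\mu_1'|=1/|\mu_2|$ and $|\mu_2'|=1/|\mu_1|$.
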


\begin{proof}
Note that positivity guarantees both $f_1$ and $\overline{f_2}$ give pullback complex structures. The orientation assumption implies $|\mu_1|<1$ and $|\mu_2|<1$ (this is justified by an expression for $g^\zeta$ below with $\zeta=1$).

We can write the pullback metric $g^\zeta = (f^\zeta)^*\inners$ in two ways: 1) in terms of $f_1^\zeta$ and $\overline{f_2^\zeta}$ as $$g^\zeta = -\frac{4}{(f_1^\zeta-\overline{f_2}^\zeta)^2}df_1^\zeta d\overline{f_2}^\zeta$$ (here, $df_i^\zeta$ is interpreted as a complex valued $1$-form on $\Sigma$), and 2) in terms of the harmonic map data as $$g^\zeta = (Uh + \frac{q_1\overline{q_2}}{Uh})+\zeta^2 q_1 + \zeta^{-2}\overline{q_2}.$$
Let $z$ be a holomorphic local coordinate for $c$ and write $h=\lambda dz d\overline{z}$. Taking the second expression, we factor  
$$g^\zeta = U\lambda(dz+\zeta^{-2}\overline{\mu_2}d{z})(d\overline z+\zeta^2\mu_1d\overline{z}).$$ The expressions $dz+\zeta^{-2}\overline{\mu_2}d{z}$ and $d\overline z+\zeta^2\mu_1d\overline{z}$ are locally defined $1$-forms, which we denote by $\alpha^{\zeta}$ and $\beta^{\zeta}$ respectively. Their spans determine line sub-bundles of $\C T^*\Sigma$, which we denote by just $\textrm{Span}_{\C}\alpha^{\zeta}$ and $\textrm{Span}_{\C}\beta^{\zeta}$. Let $\ell_1^\zeta$ be the complex line in $\C T^*\Sigma$ spanned by $df_1^\zeta$ and let $\ell_2^\zeta$ be the complex line in $\C T^*\Sigma$ spanned by $d\overline{f_2}^\zeta.$ Comparing expressions for the pullback metric, we have that over every point in our local coordinate patch, as unordered sets $$\{\ell_1^\zeta,\ell_2^\zeta\}=\{\textrm{Span}_{\C}\alpha^{\zeta},\textrm{Span}_{\C}\beta^{\zeta}\}.$$

If $\zeta\in A$, then $\alpha^\zeta$ has no real vector in its kernel, so $\textrm{Span}_{\C}\alpha^\zeta$ never defines a point in $\mathbb{P}T^*S$. Note that for such $\zeta$, $\textrm{Span}_{\C}\alpha^{\zeta}$ lies in the same component of  $\mathbb{PC}T^*S\backslash \mathbb{P}T^*S$ as $[dz]$. Since $f_1$ is an orientation preserving local diffeomorphism, for $\zeta$ close to $1$, $df_1^\zeta$ continues to be an orientation preserving local diffeomorphism, and moreover $\ell_1^\zeta$ lies in the same component as $[dz]$. Thus, for $\zeta$ close to $1$, $\ell_1^\zeta=\textrm{Span}_{\C}\alpha^{\zeta}$. By holomorphicity, we deduce that $\ell_1^\zeta=\textrm{Span}_{\C}\alpha^{\zeta}$ for all $\zeta\in A$. Moreover, $df_1^\zeta$ has no real vector in its kernel and gives the same component as $[dz]$, so it is an orientation preserving local diffeomorphism.

The exact same argument applies to $\beta^\zeta$ to shows that for $\zeta \in B$, $\ell_2^\zeta=\textrm{Span}_{\C}\beta^{\zeta}$, and $\overline{f_2}^\zeta$ is an orientation reversing local diffeomorphism.
\end{proof}
A consequence of Theorem \ref{thm: A and B} is that for $\zeta \in A$, $f_1^\zeta$ determines a $\mathbb{CP}^1$-structure with underlying complex structure determined by the Beltrami form $\zeta^{-2}\overline{\mu_2}$, and for $\zeta \in B$, $\overline{f_2}^\zeta$ determines a $\mathbb{CP}^1$-structure with underlying complex structure determined by $\zeta^2\mu_1$. When $\zeta \in A\cap B$, if we take the pair of complex structures corresponding to $(\zeta^{-2}\overline{\mu_2}, \zeta^2\mu_1)$, then $f^\zeta$ is complex harmonic with vanishing Hopf differentials. 

The main application toward Theorem \ref{thm: main} comes from the fact that complex projective structures are known to have smooth holonomy. We thus have the following.
\begin{cor}
\label{cor: C* preserves smooth cv}
    Let $\Sigma$ be a closed surface and let $f=(f_1,f_2):(\widetilde{\Sigma},c,\overline{c})\to \mathbb{G}$ be a complex harmonic map such that $g=f^*\inners$ is positive. For $\zeta\in \C^*$, the complex harmonic map $f^\zeta$ has holonomy in the smooth locus of the $\textrm{SL}(2,\C)$ character variety.
\end{cor}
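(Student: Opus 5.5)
The plan is to combine Theorem \ref{thm: A and B} with the classical fact that holonomies of complex projective structures on a closed surface are irreducible and non-elementary, hence lie in the smooth locus.

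First I check that $A\cup B=\C^*$. On a closed surface, positivity of $g$ together with the orientation assumptions gives $|\mu_1|<1$ and $|\mu_2|<1$ pointwise; this is the observation at the start of the proof of Theorem \ref{thm: A and B}. The Beltrami forms are continuous and descend to the compact surface $\Sigma$ (the quantities $q_i$, $U$, $h$ all descend), so $\|\mu_1\|_\infty<1$ and $\|\mu_2\|_\infty<1$. Given $\zeta\in\C^*$, suppose $\zeta\notin A$, i.e. $|\zeta|^2\le\|\mu_2\|_\infty<1$. Then
\[
|\zeta|^{-2}>1>\|\mu_1\|_\infty ,
\]
so $\zeta\in B$. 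Hence every $\zeta$ lies in $A$ or in $B$.

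Next, for $\zeta\in A$, Theorem \ref{thm: A and B} says $f_1^\zeta$ is an orientation preserving local diffeomorphism $\widetilde\Sigma\to\CP^1$, equivariant for the holonomy $\rho^\zeta$ of $f^\zeta$. Pulling back the complex structure of $\CP^1$ gives a complex structure on $\Sigma$, namely the one with Beltrami form $\zeta^{-2}\overline{\mu_2}$. Since $f_1^\zeta$ is then a holomorphic local diffeomorphism, it is the developing map of a $\CP^1$-structure with holonomy $\rho^\zeta$. For $\zeta\in B$ the same argument applies to $\overline{f_2}^\zeta$, yielding a $\CP^1$-structure on $\overline\Sigma$. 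Either way, $\rho^\zeta$ is the holonomy of a $\CP^1$-structure on a closed surface of genus $\ge2$.

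Finally I invoke the standard result (Gunning, Gallo–Kapovich–Marden) that such holonomies are non-elementary, hence irreducible. The lifting obstruction to $\mathrm{SL}(2,\C)$ vanishes for these holonomies, and irreducible points are smooth points of the character variety. This gives the claim, modulo the mismatch between $\mathrm{SL}$ and $\PSL$ in the statement.

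The main obstacle is making sure the holonomy of $f^\zeta$, defined via the holomorphic family of \cite[Theorem 5.1]{BEE}, is really the same representation for which $f_1^\zeta$ is equivariant. This follows because $f^\zeta$ is equivariant and $f_1^\zeta$ is its first projection under the diagonal $\PSL(2,\C)$-action. The only other point needing care is the strictness $\|\mu_i\|_\infty<1$, and compactness supplies it.
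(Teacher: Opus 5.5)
Your proposal is correct and follows the paper's route. The paper's proof is the same one-line argument: Theorem \ref{thm: A and B} gives, for each $\zeta$, a projection of $f^\zeta$ that develops a $\CP^1$-structure with the same holonomy, and holonomies of $\CP^1$-structures are smooth points. You supply the details the paper leaves implicit: $A\cup B=\C^*$ from $\|\mu_i\|_\infty<1$ on a closed surface, and the non-elementary/lifting facts. The orientation normalization you assume is not a stated hypothesis of the corollary, but it follows from positivity of $g$ via \cite[Proposition 4.9]{BEE} after possibly swapping the two factors, an isometry commuting with the diagonal $\PSL(2,\C)$-action.
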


\section{The unifying map}\label{sec: main theorem}

\subsection{Spaces of harmonic maps}

From now on, we restrict to the special case of $\Sigma=S$ a closed oriented surface of genus $\mathtt g\ge 2$ and $c_1={c}_2=:c$. We denote by $h$ the Riemannian hyperbolic metric conformal to $c$. Also, we restrict to solutions $U$ of Equation \eqref{eq: Bochner} that admit a logarithm on $S$, hence to functions of the form $U=e^{2u}$ with $u\colon S\to \mathbb C$ a solution to the equation
\begin{equation}
\label{eq: logBochner}
    \Delta_h u= e^{2u}-\frac{q_1\overline{q_2} }{h^2}e^{-2u}-1.
\end{equation}
We are therefore interested in the space
\[
\mathcal H(S,c):=\left\{(q_1, \overline{q_2}, u)\in H^0(\mathcal K_c^2)\times H^0(\mathcal K_{\overline c}^2)\times C^{\infty}(S,\C)\ |\ 
\Delta_h u= e^{2u}-\frac{q_1\overline{q_2} }{h^2}e^{-2u}-1
\right\},
\]
which parametrizes a set of equivariant complex harmonic maps from $(\widetilde{S},c)$ to $\mathbb G$. We set
\begin{equation}
\label{eq: vb g}
\vb*g (q_1, \overline{q_2}, u)= q_1 +\left(e^{2u} + e^{-2u}\frac{q_1\overline{q_2}}{h^2 }\right)h + \overline{q_2}.
\end{equation}
to be the pullback metric.

We also define the holonomy map $$\mathrm{hol}: \mathcal H(S,c)\to \chi(\pi_1(S), \PSL(2,\C))$$ which takes 
$(q_1, \overline{q_2}, u)$ to the holonomy of the complex harmonic map $(\widetilde S,c)\to \mathbb G$ with pull-back metric $\vb*g (q_1, \overline{q_2}, u)$. Above, $\chi(\pi_1(S), \PSL(2,\C))$ is the Hausdorffification of the quotient of $\textrm{Hom}(\pi_1(S),\textrm{PSL}(2,\C))$ by the conjugation action of $\textrm{PSL}(2,\C)$. We view the analytic character variety $\chi^{an}(\pi_1(S), \textrm{PSL}(2,\C))$ as a distinguished subset.

For all $(q_1, \overline{q_2}, u)\in \mathcal H(S,c)$, the corresponding Bochner equation linearizes to 
\begin{equation}
    \label{eq: linearized bochner}
L_{(q_1, \overline{q_2}, u)}\colon \mathcal C^{\infty}(S)\to\mathcal C^{\infty}(S), \qquad L_{(q_1, \overline{q_2}, u)}(\dot{u})= \Delta_h \dot u-   \left(2 e^{2u}  - 2e^{-2u}\frac{q_1\overline{q_2}}{h^2}\right)\dot u.
\end{equation}
Since $L_{(q_1, \overline{q_2}, u)}$ is connected to $\Delta_h$, it is Fredholm of index zero. By the holomorphic implicit function theorem,
\[
\mathcal H^*(S,c):= \{(q_1, \overline{q_2}, u)\in \mathcal H(S,c)\ |\ \mathrm{hol}(q_1, \overline{q_2},u)\in \chi^{an}(\pi_1(S), \textrm{PSL}(2\C)), \text{ and } L_{(q_1, \overline{q_2}, u)}\ \text{ is bijective} \}
\]
is a complex manifold of dimension $6g-6$ such that the forgetful map

\begin{equation}
\label{eq: projection pi}
\pi\colon \mathcal H^*(S,c)\to H^0(\mathcal K^2_c)\times H^0(\mathcal K^2_{\overline c}), \qquad
    (q_1, \overline{q_2}, u)\mapsto (q_1,\overline{q_2})
\end{equation}
is a local biholomorphism. Here, $\chi^{an}(\pi_1(S), \C)$ is the analytic character variety, the class of representations such that the conjugation quotient yields a complex manifold (the bijectivity condition gives a complex manifold, and the $\chi^{an}$ condition restricts us to an open subset).

\begin{prop}
\label{prop: hol is holomorphic}
    The holonomy map $\mathrm{hol}\colon\mathcal H^*(S,c)\to \chi^{an}(\pi_1(S),\PSL(2,\C))$ is holomorphic.
\end{prop}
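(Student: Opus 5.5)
The plan is to show that holonomy is obtained by solving a linear ODE whose coefficients depend holomorphically on the point of $\mathcal H^*(S,c)$, and then to project to the character variety. Since $\mathcal H^*(S,c)$ is a complex manifold and $\pi$ is a local biholomorphism, holomorphicity is local. So fix a point $(q_1^0,\overline{q_2}^0,u^0)$ and a small neighbourhood on which $\pi$ is a biholomorphism onto an open set $V\subset H^0(\mathcal K_c^2)\times H^0(\mathcal K_{\overline c}^2)$. By the holomorphic implicit function theorem used to build $\mathcal H^*(S,c)$, the inverse $V\ni (q_1,\overline{q_2})\mapsto u(q_1,\overline{q_2})\in C^{\infty}(S,\C)$ is holomorphic as a map into a Banach space such as $C^{k,\alpha}$. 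Consequently the complex metrics $\vb*g(q_1,\overline{q_2},u)$ of \eqref{eq: vb g} form a holomorphic family of constant curvature $-1$ complex metrics on $S$.

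Next, reconstruct the equivariant immersion of Proposition \ref{prop: metric to immersion} by a flat connection. Model $\mathbb G\cong \hat{\mathbb X}_2\subset (\C^3,\inners_{2,1})$. An admissible immersion $f$ with $f^*\inners=g$ is equivalent to a parallel frame. Concretely, on the trivial bundle $\widetilde S\times \C^3$ (or on $\C TS\oplus \underline{\C}$, with the $\C^3$-valued frame $(df, f)$), there is a connection built from the Levi-Civita connection of $g$ and the "second fundamental form" $g$ itself. This is the usual hyperboloid Gauss--Weingarten system: $\nabla_X(f,df)$ is expressed via $\nabla^g$ and the identity $\mathrm{Hess} f = g\, f$. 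Its flatness is exactly $K_g=-1$. The connection $1$-form is an algebraic expression in $g$, $g^{-1}$ and first derivatives of $g$, through the Christoffel symbols or through the isotropic coframe construction of Proposition \ref{prop: connection forms}. Each of these depends holomorphically on the family, because $g$ is non-degenerate (Lemma \ref{lem: det}), so inverting is holomorphic.

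Then fix a basepoint $\tilde p\in \widetilde S$ and generators $\gamma$ of $\pi_1(S)$, and compute holonomy as the parallel transport $\mathrm{SO}(3,\C)\cong\PSL(2,\C)$ of this flat connection along loops representing $\gamma$. Solutions of linear ODEs depend holomorphically on coefficients that depend holomorphically on parameters. This gives a holomorphic map $V\to \mathrm{Hom}(\pi_1(S),\PSL(2,\C))$ lifting $\mathrm{hol}$; normalizing the initial frame at $\tilde p$ fixes the isometry ambiguity of Proposition \ref{prop: metric to immersion}. Because the image lies in $\chi^{an}$ by definition of $\mathcal H^*(S,c)$, and the projection from the irreducible/good locus of $\mathrm{Hom}$ to $\chi^{an}$ is holomorphic, the composition is holomorphic.

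The main obstacle is to justify the holomorphic dependence of $u$ on $(q_1,\overline{q_2})$ in a topology strong enough to control the connection coefficients, which involve first derivatives of $g$ and hence of $u$. I would handle this by running the implicit function theorem in $C^{2,\alpha}$: the map $(q_1,\overline{q_2},u)\mapsto \Delta_h u-e^{2u}+\frac{q_1\overline{q_2}}{h^2}e^{-2u}+1$ is holomorphic into $C^{0,\alpha}$, with derivative $L$ invertible. By elliptic regularity this agrees with the smooth solutions, and $C^{2,\alpha}$ control is more than enough for a $C^{1}$ connection. A secondary point is orientation: one must check the frame lands in $\mathrm{SO}(3,\C)$ rather than $\mathrm{O}(3,\C)$, equivalently $\PSL(2,\C)$ rather than its extension by the swap. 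This is preserved along the connected family, since the connection is $\mathfrak{so}(3,\C)$-valued.
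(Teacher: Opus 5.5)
Your proof is correct and has the same skeleton as the paper's. You locally invert $\pi$, observe that the resulting metrics form a holomorphic family of curvature $-1$ complex metrics, and deduce that their holonomy varies holomorphically.

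The difference is entirely in the last step. The paper treats it as a black box, invoking Lemma \ref{lem: holo dependence Bonsante} (Theorem D of \cite{BEE}). That lemma's regularity hypothesis is only that $\xi\mapsto g_\xi|_x$ is holomorphic for each fixed $x\in S$. This follows at once from holomorphicity of the evaluation maps $(q_1,\overline{q_2},u)\mapsto q_1|_x,\overline{q_2}|_x,u(x)$ on $\mathcal H^*(S,c)$, so the paper's proof is two lines.

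You instead reprove that lemma in this setting. You realize the holonomy as the monodromy of the hyperboloid Gauss--Weingarten connection $\nabla^E_X(Y+a\mathbf 1)=(\nabla^g_XY+aX)+(X(a)+g(X,Y))\mathbf 1$ on $\C TS\oplus\underline{\C}$. This connection is compatible with $g\oplus(-1)$ and flat exactly when $K_g=-1$. You then use holomorphic dependence of solutions of linear ODEs on parameters. Because the connection coefficients involve first derivatives of $g$, you need the stronger input that $u$ varies holomorphically in a $C^1$-type topology. You correctly obtain this by running the implicit function theorem in $C^{2,\alpha}$, with elliptic regularity matching those solutions to the smooth ones. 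Your route is self-contained and shows transparently why holonomy is holomorphic; the paper's route is shorter and needs only pointwise holomorphicity as input.

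Two minor precisions:
\begin{enumerate}
\item Lemma \ref{lem: det} is only a criterion, so the non-degeneracy of $g$ you rely on is not a consequence of it. It is implicit in $\mathrm{hol}$ being defined on $\mathcal H^*(S,c)$ through Proposition \ref{prop: metric to immersion}, and the paper's cited lemma needs it equally.
\item To get a holomorphic lift to $\mathrm{Hom}(\pi_1(S),\PSL(2,\C))$, the orthonormal frame of $(\C T_pS\oplus\C,\, g_\xi|_p\oplus(-1))$ at the basepoint must be chosen holomorphically in $\xi$. This is possible locally.
\end{enumerate}
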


We will prove it using the following fact from \cite[Theorem D]{BEE}.
\begin{lem}
\label{lem: holo dependence Bonsante}
    Let $Z$ be a complex manifold, let $\{g_{\xi}\}_{\xi\in Z}$ be a family of complex metrics of constant curvature $-1$ and with holonomy in $\chi^{an}(\pi_1(S), \PSL(2,\C))$. Assume that, for all $x\in S$, the map $Z\to \mathrm{Sym}_2(\C T_x S)$ given by $\xi\mapsto {g_{\xi}}_{|x}$ is holomorphic. Then, the map $Z\to \chi^{an}(\pi_1(S),\PSL(2,\C))$, $\xi \mapsto\mathrm{hol}(g_{\xi})$, is holomorphic. 
\end{lem}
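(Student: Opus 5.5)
The strategy is to express the holonomy of $g_\xi$ as the holonomy of a flat $\mathfrak{sl}(2,\C)$-connection whose connection form is built algebraically from $g_\xi$ and its first $x$-derivatives. We then use the fact that solutions of linear ODEs depend holomorphically on holomorphic parameters, and finally push the result down to the character variety. Since holomorphicity is a local property on $Z$, we fix $\xi_0\in Z$ and work in a small neighbourhood $Z_0$ of it. We assume, as is implicit, that the family is jointly smooth in $(\xi,x)$. By the Cauchy integral formula in $\xi$, the $x$-derivatives of $g_\xi$ are then also holomorphic in $\xi$.

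\textbf{Step 1: holomorphic isotropic coframes.} Lift everything to $\widetilde S$. At each point $x$ the isotropic lines of $g_\xi$ are the two roots of a quadratic form with nonzero discriminant, by non-degeneracy. Hence they are given locally by holomorphic (in $\xi$) and smooth (in $x$) branches. The unordered pair of isotropic lines defines a double cover of $\widetilde S\times Z_0$, which is trivial after shrinking $Z_0$ to be simply connected. So we can order the lines globally on $\widetilde S\times Z_0$. Choosing spanning forms and normalizing, we obtain $1$-forms $\eta^1_\xi,\eta^2_\xi$ on $\widetilde S$ with $g_\xi=\eta^1_\xi\cdot\eta^2_\xi$, depending holomorphically on $\xi$. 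Equivariance is not needed here, since we will compute holonomy via parallel transport along paths in $\widetilde S$. Proposition \ref{prop: connection forms} then gives $\eta_{c,\xi}=B\eta^1+A\eta^2$. Here $A$ and $B$ are obtained from $d\eta^i$ by dividing by $\eta^1\wedge\eta^2$, so $\eta_{c,\xi}$ is again holomorphic in $\xi$.

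\textbf{Step 2: the flat connection.} Define on the trivial $\C^2$-bundle over $\widetilde S$ the connection $\nabla_\xi=d+\mathcal A_\xi$, where $\mathcal A_\xi$ is the $\mathfrak{sl}(2,\C)$-valued form with diagonal entries $\pm\tfrac12\eta_{c,\xi}$ and off-diagonal entries proportional to $\eta^1_\xi,\eta^2_\xi$. This is the Cartan connection modeling $\mathbb G=\mathrm{SO}(3,\C)/\mathrm{SO}(2,\C)$. Its flatness is equivalent to the structure equations \eqref{eq: isotropic LC forms} together with $d\eta_c=\frac{K}{2}\eta^1\wedge\eta^2$ with $K=-1$. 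It therefore holds by Proposition \ref{prop: connection forms}, once the normalization constants are chosen correctly.

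A parallel frame gives a map $\widetilde S\to \mathrm{PSL}(2,\C)$, and composing with the orbit map gives an admissible immersion into $\mathbb G$ pulling back $g_\xi$. By the uniqueness in Proposition \ref{prop: metric to immersion}, this immersion agrees with the one defining $\mathrm{hol}(g_\xi)$ up to an isometry. Consequently $\rho_\xi(\gamma)$ equals the parallel transport of $\nabla_\xi$ from a basepoint $\tilde x_0$ to $\gamma\tilde x_0$, up to simultaneous conjugation. The conjugation here comes only from the change of frame $\eta^i\mapsto\gamma^*\eta^i$, which is a gauge transformation, possibly combined with swapping the two lines, which is the isometry exchanging factors.

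\textbf{Step 3: holomorphic dependence and descent.} Fix generators $\gamma_1,\dots,\gamma_{2\mathtt g}$ of $\pi_1(S)$ and paths $\tilde x_0\to\gamma_j\tilde x_0$. The parallel transport solves a linear ODE whose coefficients are holomorphic in $\xi$, so each $\rho_\xi(\gamma_j)$ is holomorphic in $\xi$. Thus $\xi\mapsto\rho_\xi$ is a holomorphic map $Z_0\to\mathrm{Hom}(\pi_1(S),\PSL(2,\C))$. Since its image lies over $\chi^{an}$, where the quotient map is a holomorphic submersion, the composite $\xi\mapsto\mathrm{hol}(g_\xi)$ is holomorphic.

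\textbf{Main obstacle.} The delicate point is Step 2: pinning down $\mathcal A_\xi$ with the correct normalizations so that flatness is exactly the curvature $-1$ condition. It must also be checked that the associated developing map really is the map of Proposition \ref{prop: metric to immersion}, so that the holonomy is well defined up to conjugation. This includes making sure that the ordering of isotropic lines in Step 1 does not introduce a $\xi$-dependent twist by the swap isometry. I would first verify the normalization on the hyperbolic plane, where $g$ is the complexified hyperbolic metric and $\nabla$ should reduce to the standard $\mathrm{PSL}(2,\R)$ Cartan connection.
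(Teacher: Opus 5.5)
The paper does not prove this lemma; it imports it from \cite[Theorem D]{BEE}. Your moving-frame argument is a reasonable self-contained substitute, and Step 1 and the construction in Step 2 check out. With
\[
\mathcal A_\xi=\begin{pmatrix}\tfrac12\eta_{c,\xi} & a\,\eta^1_\xi\\ b\,\eta^2_\xi & -\tfrac12\eta_{c,\xi}\end{pmatrix},
\]
the flatness equation $d\mathcal A_\xi+\mathcal A_\xi\wedge\mathcal A_\xi=0$ is exactly \eqref{eq: isotropic LC forms} together with $d\eta_{c,\xi}=-2ab\,\eta^1_\xi\wedge\eta^2_\xi$. So $ab=\tfrac14$ encodes $K=-1$. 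If $F_\xi^{-1}dF_\xi=\mathcal A_\xi$, the map $(F_\xi\cdot\infty,F_\xi\cdot 0)$ pulls back $4ab\,\eta^1_\xi\eta^2_\xi=g_\xi$. The joint smoothness in $(\xi,x)$ that you add is not in the statement, but it holds in the paper's applications.

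What is wrong as written is the claim that $\rho_\xi(\gamma)$ equals the parallel transport from $\tilde x_0$ to $\gamma\tilde x_0$ up to a \emph{simultaneous} conjugation. Your coframe is not $\pi_1$-invariant, and in general it cannot be: for a positive metric, an invariant $\eta^1$ would be a nowhere-vanishing section of a line bundle of nonzero degree on $S$, such as $\mathcal K_{c_1}$. Since the holonomy lies in $\PSL(2,\C)$, which acts diagonally on $\CP^1\times\CP^1$, deck transformations preserve each isotropic line. Hence $\gamma^*\eta^1_\xi=\mu_{\gamma,\xi}\,\eta^1_\xi$ and $\gamma^*\eta^2_\xi=\mu_{\gamma,\xi}^{-1}\eta^2_\xi$ for a nowhere-vanishing function $\mu_{\gamma,\xi}$ on $\widetilde S$. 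Uniqueness in Proposition \ref{prop: connection forms} gives $\gamma^*\eta_{c,\xi}=\eta_{c,\xi}-d\log\mu_{\gamma,\xi}$. Therefore $\gamma^*\mathcal A_\xi=G^{-1}\mathcal A_\xi G+G^{-1}dG$ with $G=G_{\gamma,\xi}=\mathrm{diag}(\mu_{\gamma,\xi}^{-1/2},\mu_{\gamma,\xi}^{1/2})$. Normalizing $F_\xi(\tilde x_0)=I$, one gets $F_\xi\circ\gamma=\rho_\xi(\gamma)\,F_\xi\,G_{\gamma,\xi}$, using that $G_{\gamma,\xi}$ fixes the base point of $\mathbb G$. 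That is,
\[
\rho_\xi(\gamma)=F_\xi(\gamma\tilde x_0)\,G_{\gamma,\xi}(\tilde x_0)^{-1}.
\]
The correction is a $\gamma$-dependent right factor, not a conjugation. The raw transports $F_\xi(\gamma_j\tilde x_0)$ need not even satisfy the surface-group relation, so Step 3 as written does not produce a point of $\mathrm{Hom}(\pi_1(S),\PSL(2,\C))$.

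The repair is one line. $\mu_{\gamma,\xi}(\tilde x_0)$ is holomorphic and nonvanishing in $\xi$, so $\xi\mapsto G_{\gamma,\xi}(\tilde x_0)\in\PSL(2,\C)$ is holomorphic; the sign of the square root is invisible in $\PSL$. With the corrected formula, Step 3 goes through. You should also make the global holomorphic choice of $\eta^1_\xi$ in Step 1 explicit. For example, impose $\eta^1_\xi(w)=1$ for a fixed vector field $w$ on $\widetilde S$ spanning the other isotropic line of $g_{\xi_0}$. This line stays uniformly transverse to $\ker\eta^1_\xi$ for $\xi$ near $\xi_0$ by compactness of $S$.
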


\begin{proof}[Proof of Proposition \ref{prop: hol is holomorphic}]
 Recalling that the projection $\mathcal H^*(S,c) \to H^0(\mathcal K^2_c)\times H^0(\mathcal K^2_{\overline c})$ is a local biholomorphism, consider any local inverse $U\to V$, $(q_1, \overline{q_2})\mapsto (q_1, \overline{q_2}, u(q_1, \overline{q_2}))$. Since the projection from $\mathcal H^*(S,c)$ to each of its factors is holomorphic, for all $x\in S$, the evaluation maps $(q_1, \overline{q_2}, u)\mapsto q_1 |_x, \overline{q_2}|_x, u(x) $ are holomorphic.
 We deduce from the formula (\ref{eq: logBochner}) that the evaluation of $\vb*g(q_1, \overline{q_2}, u)$ at each $x$ depends holomorphically on $(q_1, \overline{q_2}, u)$. We conclude by Lemma \ref{lem: holo dependence Bonsante}.
\end{proof}
Finally, we put the $\C^*$-action on $\mathcal{H}(S,c)$. 
\begin{defn}
    The $\C^*$-action on $\mathcal{H}(S,c)$ is given by $\zeta\cdot (q_1,\overline{q_2},u)=(\zeta^2 q_1,\zeta^{-2}\overline{q_2},u).$
\end{defn}
About $\mathcal{H}^*(S,c)$, it is immediate from the formula \eqref{eq: linearized bochner} that the $\C^*$-action preserves the bijectivity (or lack thereof) of $L_{(q_1,\overline{q_2},u)}$, but it is not clear that the action preserves the analytic holonomy property.

\subsection{Key loci in $\mathcal H^*(S,c)$}\label{subsec: key loci}
\subsubsection{Harmonic maps between hyperbolic surfaces}

As discussed previously, by embedding $\mathbb{H}^2\to \mathbb{G}$, ordinary harmonic maps become complex harmonic maps. Choosing the specific totally geodesic isometric embedding $z\mapsto (z,\overline{z})$, Fuchsian harmonic maps into $\mathbb{H}^2$ correspond to the locus \[
\mathbf W(S,c):= \{(q,\overline q, u)\in \mathcal H(S,c)\ |\ u\text{ is real-valued}\}
\] in $\mathcal{H}(S,c)$. By a simple maximum principle argument, $\mathbf W(S,c)\subset \mathcal H^*(S,c)$.
As in the introduction, we have Hitchin-Wolf's parametrization $\mathcal W\colon H^0(\mathcal K_c^2)\xrightarrow{\sim}  \mathcal T(S)$, and for all  $(q,\overline q, u)\in \mathbf W(S,c)$, $\mathrm{hol}(q,\overline q, u)= (\mathcal W(q), \overline{\mathcal W(q)})\in \TSTS\cong\QF(S)\subset \chi(\pi_1(S), \PSL(2,\C))$.

\subsubsection{Complex projective structures}
We define the properly embedded submanifolds 
\begin{align*}
    \mathbf S(S,c):=\{ (q, 0, 0)\ |\ q\in H^0(\mathcal K_c^2)\}\subset \mathcal H(S,c)\\
   \overline{\mathbf S}(S,c):=\{ (0, \overline q, 0)\ |\ q\in H^0(\mathcal K_c^2)\}\subset \mathcal H(S,c).
\end{align*}

Since $(0,0,0)\in \mathcal{H}^*(S,c)$ and $\mathcal{H}^*(S,c)$ is open, $\C^*$-invariance and shows that the linearization of the Bochner equation is an isomorphism on $\mathbf S(S,c)$ and $\overline{\mathbf S}(S,c)$. Since complex projective structures have smooth holonomy, we conclude that both $\mathbf S(S,c)$ and $\overline{\mathbf S}(S,c)$ lie in $\mathcal{H}^*(S,c)$. By the discussion in Section \ref{subsubsection: CP1 structures}, $\mathbf S(S,c)$ and $\overline{\mathbf S}(S,c)$ parametrize $\mathbb{CP}^1$-structures for $(S,c)$ and $(S,\overline c)$ respectively, and \begin{align*}
  \mathrm{hol}(q,0,0)&= \mathcal S\left(-\frac 12 q\right)\\
  \mathrm{hol}(0,\overline q, 0)&=\overline {\mathcal S}\left(-\frac 12 \overline q\right),
\end{align*}
where $\mathcal{S}$ is the Schwarzian parametrization from the introduction and $\overline{\mathcal{S}}$ is its conjugate.

Observe that $(q,0,0)\in \mathbf S(S,c)$ corresponds to a $\CP^1$-structure for $(S,c)$ coming from the Bers embedding if and only if $q\in \overline{\mathcal Be}(\mathcal T(\overline S))\subset H^0(\mathcal K_c^2)$. Similar for elements $(0, \overline q, 0)\in \overline{\mathbf S}(S,c)$.
In the identification $\QF(S)\cong \TSTS$ provided by Bers' theorem, we can summarize the interplay between these spaces through the following diagram.
\begin{equation}
\begin{tikzcd}
& H^0(\mathcal K_c^2)\times\{0\} & \arrow[l, "\sim" , "\pi" swap] \mathbf{S}(S,c)\arrow[r, "hol" ] & \chi^{an}(\pi_1(S), \PSL(2,\C))&\\
& \overline {\mathcal Be}(\mathcal T(\overline S))\times\{0\} \arrow[u, phantom, "\subset" rotate=90]  & \arrow[l, "\sim" , "\pi" swap] \{(q, 0, 0)\ |\  q\in   \overline {\mathcal Be}(\mathcal T(\overline S))\} \arrow[r, "hol" , "\sim" swap]  \arrow[u, phantom, "\subset" rotate=90]&\{[c]\}\times\mathcal T(\overline S) \subset \QF(S)\arrow[u, phantom, "\subset" rotate=90]&
\end{tikzcd}
\end{equation}

\subsubsection{Almost-Fuchsian immersions}\label{subsubsec: almost fuchsian}
By the discussion in Section \ref{subsubsec: mimimal immersions}, an element of the form $(q, -\overline q, u)\in \mathcal H(S,c)$ with $u$ real-valued determines a minimal equivariant immersion $\widetilde S\to \mathbb H^3$ with fundamental forms $\Is= e^{2u}h$ and $\II=\frac 12 (q+\overline q)$. Such an immersion is said to be \textbf{almost-Fuchsian} if the eigenvalues of the shape operator $\Is^{-1}\II$ lie between $-1$ and $+1$; equivalently, a minimal immersion is almost-Fuchsian if $det_{\Is}\II > -1$, i.e., if
\begin{equation}
\label{eq: almost Fuchsian condition}
  \frac 1 {e^{4u}}  \frac{|q|^2}{h^2}<1. 
\end{equation}
The space of almost-Fuchsian immersions up to ambient isometries is therefore parametrized by
\[
\mathbf{U}(S,c):=\{ (q, -\overline q, u)\ |\ \text{$u$ is real, and } \frac{|q|^2}{h^2} e^{-4u}<1 \}.
\]
By a simple maximum principle argument applied to Equation \eqref{eq: logBochner}, we see that for $q\in H^0(\mathcal K_c^2)$ there exists at most one real-valued function $u$ such that $(q, -\overline q, u)\in \mathbf{U}(S,c)$. Applying again the maximum principle to $L_{(q, -\overline q, u)}\dot u=0$, we conclude that $\mathbf U(S,c)\subset \mathcal H^*(S, c)$ is an embedded submanifold, hence the projection on the first factor $\mathbf U(S,c)\to H^0(\mathcal K_c^2)$ is a smooth open embedding, whose image we denote by $\mathsf{Uh}(S,c)$. 

Almost-Fuchsian representations are defined to be the holonomies of almost-Fuchsian immersions. As proved by Uhlenbeck \cite{Uh}, for every almost-Fuchsian representation $\rho$  there exists a unique $\rho$-equivariant almost-Fuchsian immersion (up to precomposing with lifts of isotopies of $S$), and almost-Fuchsian representations determine an open subset $\mathrm{AF}(S)$ of $\QF(S)\subset \chi^{an}(\pi_1(S), \PSL(2,\mathbb C))$. 
We define $\mathrm{AF}(S,c)\subset\mathrm{AF}(S)$ as the set of conjugacy classes of holonomies of almost-Fuchsian immersions with first fundamental form conformal to $c$. Putting it all together, we have that 
\begin{align*}
    H^0(\mathcal K_c^2)\supset  \mathsf{Uh} (S,c) \ \  \xleftarrow{\sim}\ \ &\mathbf U(S, c)\ \ \xrightarrow{\sim}\ \ \mathrm{AF}(S,c)\subset \mathrm{AF}(S)\\
    q\mapsfrom\ \ (&q,-\overline q, u) \ \ \mapsto \mathrm{hol}( q,-\overline q, u )
\end{align*}

\subsection{The holonomy map and the $\C^*$-action}\label{subsec: holonomy immersion}

For $(q_1,\overline{q_2}, u)\in \mathcal H^*(S,c)$ recall the notation $\vb*g(q_1, \overline{q_2}, u)$ (Equation \eqref{eq: vb g}). We prove the following.
\begin{thm}\label{thm: immersion}
    Let $U\subset \mathcal H^*(S,c)$ be a connected open subset such that: \begin{itemize}
    \item $hol$ is a local biholomorphism on $U$;
        \item $\vb*g(q_1, \overline{q_2}, u)$ is a positive complex metric for all $(q_1, \overline{q_2}, u)\in U$;
    \item $U$ contains $\mathbf W(S,c)$. 
    \end{itemize}
    Then $\mathbb C^*\cdot U$ is contained in $\mathcal H^*(S,c)$ and $\mathrm{hol}$ is a local biholomorphism on $\mathbb C^*\cdot U$.
\end{thm}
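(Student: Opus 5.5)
The plan has two parts. First, show that $\mathbb C^*\cdot U\subset \mathcal H^*(S,c)$. Second, show that $d\,\mathrm{hol}$ is invertible at every point of $\mathbb C^*\cdot U$.

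For the first part, take $x=(q_1,\overline{q_2},u)\in U$ and $\zeta\in\mathbb C^*$. By formula \eqref{eq: linearized bochner}, $L_{\zeta\cdot x}=L_x$, since the operator only involves $u$ and the product $q_1\overline{q_2}$, and both are $\mathbb C^*$-invariant. So bijectivity of the linearized Bochner operator is preserved. Because $\vb*g(x)$ is positive, Corollary \ref{cor: C* preserves smooth cv} applies to the complex harmonic map $f$ associated with $x$. Hence $\mathrm{hol}(\zeta\cdot x)$ lies in the smooth locus $\chi^{an}$, and $\zeta\cdot x\in\mathcal H^*(S,c)$. By Proposition \ref{prop: hol is holomorphic}, $\mathrm{hol}$ is then holomorphic on the open set $\mathbb C^*\cdot U$. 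Both $\mathbb C^*\cdot U$ and $\chi^{an}$ have complex dimension $6\mathtt g-6$, so it only remains to prove that $d\,\mathrm{hol}$ is injective at every $\zeta\cdot x$.

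For the injectivity, I would transport the problem to complex projective structures, using Theorem \ref{thm: A and B}. The sets $A$ and $B$ defined there cover $\mathbb C^*$: positivity together with the orientation conventions gives $|\mu_1|,|\mu_2|<1$, and these are continuous forms on a closed surface, so their sup norms are also $<1$. Fix $\zeta$, and suppose $\zeta\in A$; the case $\zeta\in B$ is symmetric. Then $f_1^{\zeta}$ is an orientation-preserving local diffeomorphism. It is therefore the developing map of a $\mathbb{CP}^1$-structure whose underlying complex structure $c(\zeta^{-2}\overline{\mu_2})$ is given by the Beltrami form $\zeta^{-2}\overline{\mu_2}$, and whose holonomy is $\mathrm{hol}(\zeta\cdot x)$. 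The condition defining $A$ is open in $x$, so this construction varies holomorphically over a neighbourhood of $\zeta\cdot x$. It defines a holomorphic map $\Theta$ into the space $\mathcal{CP}(S)$ of marked $\mathbb{CP}^1$-structures, with $\mathrm{hol}=\mathrm{Hol}\circ\Theta$ near $\zeta\cdot x$.

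By Hejhal's theorem the holonomy map $\mathrm{Hol}\colon\mathcal{CP}(S)\to\chi^{an}$ is a local biholomorphism. Injectivity of $d\,\mathrm{hol}$ at $\zeta\cdot x$ is therefore equivalent to injectivity of $d\Theta$. I would prove the latter by a rigidity argument. Suppose a tangent vector $v$ at $\zeta\cdot x$ is killed by $d\Theta$. Then, to first order, the developing map $f_1^{\zeta}$ is unchanged up to a $\mathrm{PSL}(2,\C)$ motion, and the holonomy is fixed. The second component $\overline{f_2}^{\zeta}$ is $\rho$-equivariant and makes the pair complex harmonic, and it is determined by $f_1^{\zeta}$ through the Hopf data. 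From this one should deduce that the first-order variation of $\vb*g(\zeta\cdot x)$ vanishes, and hence, from \eqref{eq: vb g}, that $v=0$.

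If this rigidity argument turns out to be awkward, the fallback is a connectedness argument. Consider the holomorphic function $\zeta\mapsto \det d\,\mathrm{hol}_{\zeta\cdot x}$, computed in the coordinates given by the local biholomorphism $\pi$. One would show that its zero set in $\mathbb C^*\cdot U$ is both open and closed in the complement of a set that the $\mathbb{CP}^1$-picture rules out. Here the hypothesis $\mathbf W(S,c)\subset U$ enters: every $\mathbb C^*$-orbit in $\mathbb C^*\cdot U$ can be connected within $U$ to the Fuchsian locus, where $\mathrm{hol}$ is known to be a local biholomorphism.

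I expect the rigidity step to be the main obstacle, namely showing that a first-order deformation fixing the $\mathbb{CP}^1$-structure of $f_1^{\zeta}$ forces the whole triple $(q_1,\overline{q_2},u)$ to be fixed. The difficulty is that the complex structure $c(\zeta^{-2}\overline{\mu_2})$ itself depends on $u$ and $\overline{q_2}$. My first attempt would be to write the pullback metric in the factored form $U\lambda\,\alpha^{\zeta}\beta^{\zeta}$ from the proof of Theorem \ref{thm: A and B}. I would then check that fixing $f_1^{\zeta}$ fixes the line field $\alpha^{\zeta}$, and hence fixes $\overline{q_2}/(Uh)$. Finally I would feed this into the linearized Bochner operator, which is bijective, to conclude that $\dot u=0$ and $\dot q_1=\dot{\overline{q_2}}=0$.
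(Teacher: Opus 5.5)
Your first step, showing $\mathbb C^*\cdot U\subset\mathcal H^*(S,c)$ via $L_{\zeta\cdot x}=L_x$ and Corollary~\ref{cor: C* preserves smooth cv}, is exactly the paper's argument. The gap is in the immersion step.

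Passing through $\Theta$ and Hejhal's theorem only reformulates the problem: $d\,\mathrm{hol}$ is injective if and only if $d\Theta$ is. The rigidity claim that carries all the content is not proved, and as sketched it fails. $\mathcal{CP}(S)$ consists of marked structures up to isotopy, so $d\Theta(v)=0$ says only that $f_1^{\zeta}$ is fixed up to a $\mathrm{PSL}(2,\C)$-motion \emph{and} an infinitesimal reparametrization. In particular, the variation of $\zeta^{-2}\overline{\mu_2}$ is only a trivial Beltrami differential $\overline{\partial}Y$, not zero. So you cannot conclude that $\alpha^{\zeta}$, or $\overline{q_2}/(Uh)$, is fixed. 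This gives at most $3\mathtt g-3$ of the $6\mathtt g-6$ needed conditions, and none at all where $q_2=0$. The other half would have to come from the Schwarzian part of the projective structure, which your sketch never uses.

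More tellingly, your main route uses neither the hypothesis that $\mathrm{hol}$ is a local biholomorphism on $U$ nor $\mathbf W(S,c)\subset U$. If it worked, it would show that $\mathrm{hol}$ is an immersion at every point of $\mathcal H^*(S,c)$ with positive pull-back metric. Consider, for instance, a point $(0,0,u)$. The slices $\{(q_1,0,u)\}$ and $\{(0,\overline{q_2},u)\}$ are mapped by $\mathrm{hol}$ to holonomies of $\mathbb{CP}^1$-structures on $(S,c)$ and on $(S,\overline c)$, respectively. So your claim there amounts to transversality, at the common holonomy, of these two half-dimensional holonomy images. Bers' theorem gives this at the Fuchsian point, but nothing in your first-order argument addresses it elsewhere.

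The fallback does not repair this. A holomorphic function of $\zeta$ that is non-zero near $\zeta=1$ can still have isolated zeros on $\C^*$. Its zero set is not open, so there is no open--closed argument. Knowing that $\mathrm{hol}$ is a local biholomorphism on $U$, or near the Fuchsian locus, says nothing about $\zeta\cdot x$ for $\zeta$ far from $1$ unless some quantity is constant along orbits.

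Supplying such a quantity is the missing idea. The paper shows that the $\C^*$-action preserves $\mathrm{hol}^*\omega^{\C}$, the pull-back of the complex Goldman symplectic form. On $\mathbf W(S,c)$ this is Hitchin's $S^1$-invariance of the Goldman (Weil--Petersson) form. The set $S^1\times \mathbf W(S,c)$ is totally real of maximal dimension in the connected complex manifold $\C^*\times(\C^*\cdot U)$, where everything is holomorphic by your first step. Hence the identity $\zeta^*(\mathrm{hol}^*\omega^{\C})=\mathrm{hol}^*\omega^{\C}$ continues analytically. Since $d\,\mathrm{hol}_x$ is injective exactly when $\mathrm{hol}^*\omega^{\C}$ is non-degenerate at $x$, non-degeneracy on $U$ transfers to $\C^*\cdot U$; this is where the hypotheses on $U$ enter.

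In the terms of your fallback: the action on $H^0(\mathcal K_c^2)\times H^0(\mathcal K_{\overline c}^2)$ has weights $\zeta^{2}$ and $\zeta^{-2}$ on spaces of equal dimension, so it preserves the standard volume form. Invariance of $\mathrm{hol}^*\omega^{\C}$ then makes $\det d\,\mathrm{hol}$, measured against $(\omega^{\C})^{3\mathtt g-3}$, constant along $\C^*$-orbits. That is precisely what your argument lacked.
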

The containment $\C^*\cdot U\subset \mathcal{H}^*(S,c)$ is just a consequence of Corollary \ref{cor: C* preserves smooth cv}. To prove the local biholomorphism property, we use the (complex) Goldman symplectic form $\omega^{\C}$ on $\chi^{an}(\pi_1(S), \PSL(2,\C))$; see \cite{Hitchin:1986vp} for one definition (which we won't make use of). Since the dimensions of both spaces coincide, it suffices to show that $\mathrm{hol}$ is an immersion. We use $\omega$ to denote the real Goldman symplectic form on the (analytic) $\textrm{PSL}(2,\R)$ character variety, which on Teichm{\"u}ller space is the Weil-Petersson symplectic form. Then, $\omega^{\C}$ is its (global) holomorphic extension to $\chi^{an}(\pi_1(S), \PSL(2,\C))$. If $\mathrm{hol}^*\omega^{\C}$ is a non-degenerate symplectic form on $U'$, then $\mathrm{hol}$ is an immersion on $U'$; since $\omega^{\C}$ is non-degenerate, the converse is true too. Thus, Theorem \ref{thm: immersion} is an immediate consequence of the lemma below.
\begin{lem}\label{lem: symplectic action}
    On the (pre-)symplectic manifold $(\C^*\cdot U,\mathrm{hol}^*\omega^{\C})$, the $\mathbb C^*$-action is (pre-)symplectic.
\end{lem}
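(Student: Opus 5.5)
The plan is to compare, for each $\zeta\in\C^*$, the holomorphic $2$-forms $m_\zeta^*\Theta$ and $\Theta$. Here $\Theta:=\mathrm{hol}^*\omega^{\C}$ and $m_\zeta(q_1,\overline{q_2},u)=(\zeta^2q_1,\zeta^{-2}\overline{q_2},u)$. I will show they agree first for $|\zeta|=1$ near the real locus $\mathbf W(S,c)$. Then I will spread this equality by analytic continuation, first in the point of $\C^*\cdot U$ and then in $\zeta$.

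\emph{Setup.} By Corollary \ref{cor: C* preserves smooth cv}, $\C^*\cdot U\subset\mathcal H^*(S,c)$. By Proposition \ref{prop: hol is holomorphic}, $\Theta$ is a holomorphic $2$-form on the complex manifold $\C^*\cdot U$. This set is connected, being the image of the connected set $\C^*\times U$, and it is $\C^*$-invariant. Each $m_\zeta$ is a biholomorphism of $\C^*\cdot U$, holomorphic jointly in $(\zeta,p)$. Hence, for fixed $p$ and fixed tangent vectors $v,w\in T_p$ (transported by the linear structure of $H^0(\mathcal K_c^2)\times H^0(\mathcal K_{\overline c}^2)$ via the local biholomorphism $\pi$), the function
\[
G_{p,v,w}(\zeta)=(m_\zeta^*\Theta-\Theta)_p(v,w)
\]
is holomorphic on all of $\C^*$.

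\emph{Step 1: invariance on the real locus for $\zeta\in S^1$.} For $|\zeta|=1$, $m_\zeta$ preserves $\mathbf W(S,c)$ and acts there as $q\mapsto \zeta^2 q$ under $\pi$. On $\mathbf W(S,c)$, $\mathrm{hol}$ equals $\mathcal W$ followed by the inclusion $\mathrm{Fuch}(S)\subset\chi^{an}$, and $\omega^{\C}$ restricts to the Weil–Petersson form $\omega$ there. So $\Theta|_{T\mathbf W}=\mathcal W^*\omega$.

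The circle-invariance of $\mathcal W^*\omega$ then follows from \eqref{eq: energy circle action}:
\[
L_X\omega=d\iota_X\omega=ddE=0,
\]
which is Hitchin's result. Hence $(m_\zeta^*\Theta-\Theta)$ vanishes on $T\mathbf W\times T\mathbf W$ at points of $\mathbf W(S,c)$.

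Since $\mathbf W(S,c)$ is totally real of maximal dimension, $T_p\mathbf W\otimes\C=T_p\mathcal H^*$, and a $\C$-bilinear form vanishing on $T_p\mathbf W$ vanishes identically. Thus $m_\zeta^*\Theta-\Theta$ vanishes at every point of $\mathbf W(S,c)$. It is a holomorphic tensor on $\C^*\cdot U$ vanishing on a maximal totally real submanifold, so it vanishes on a neighbourhood of $\mathbf W$. By the identity theorem on the connected manifold $\C^*\cdot U$, it vanishes everywhere. This gives $m_\zeta^*\Theta=\Theta$ on $\C^*\cdot U$ for every $\zeta\in S^1$.

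\emph{Step 2: extension to all $\zeta\in\C^*$.} For any $p\in\C^*\cdot U$ and $v,w$, the holomorphic function $G_{p,v,w}$ on the connected set $\C^*$ vanishes on the unit circle, hence identically. Thus $m_\zeta^*\Theta=\Theta$ for all $\zeta$, i.e.\ the action is (pre-)symplectic.

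Non-degeneracy of $\Theta$ on $U$ (from the local biholomorphism hypothesis) then propagates to $\C^*\cdot U$. This yields Theorem \ref{thm: immersion}; positivity of $\vb*g$ on $U$ is only used for the containment via Corollary \ref{cor: C* preserves smooth cv}.

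The main obstacle is Step 1. One must identify $\Theta|_{\mathbf W}$ precisely with $\mathcal W^*\omega$, using the real embedding $z\mapsto(z,\overline z)$ and the fact that the holonomy of $\mathbf W$ lands in $\mathrm{Fuch}(S)$, where $\omega^{\C}$ restricts to $\omega$. One must also justify the $S^1$-invariance of $\mathcal W^*\omega$. If citing \eqref{eq: energy circle action} is deemed circular, I would instead invoke Hitchin's statement that the circle action on the moduli of Higgs bundles preserves the Kähler forms restricted to the Fuchsian section.
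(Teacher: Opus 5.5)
Your proposal is correct and follows essentially the paper's route: invariance of the Weil--Petersson/Goldman form under Hitchin's $S^1$-action on $\mathbf W(S,c)$, then analytic continuation to the holomorphic $\C^*$-action on $\C^*\cdot U$. You spell out that continuation, which the paper compresses into one sentence, in two steps: vanishing on the maximal totally real locus with the identity theorem in the point, then the identity theorem in $\zeta$. You also obtain the $S^1$-invariance from \eqref{eq: energy circle action} via Cartan's formula rather than citing Hitchin's invariance statement directly; this is not circular, since the paper imports that equation from Hitchin independently of this lemma.
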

\begin{proof}
    The $\C^*$-action restricts to an $S^1$-action on the real submanifold of real harmonic maps $\mathbf W(S,c)$. Using the above identification between $\mathbf W(S,c)$ and $H^0(\mathcal{K}_c^2)$, it follows from \cite[pp. 92 and Proposition 11.17]{Hitchin:1986vp} that on $\mathbf W(S,c)$, the $S^1$-action is symplectic for $\omega$. Indeed, in \cite{Hitchin:1986vp}, Hitchin's section identifies $H^0(\mathcal{K}_c^2)$ with the Hitchin component of the moduli space of $\textrm{SL}(2,\R)$-Higgs bundles, on which the pullback of the Goldman form is identified with his symplectic form $``\omega_1$" (Proposition 11.17), which he shows to be invariant under the $S^1$-action (pp. 92). The result now follows by analytic continuation: the action $\C^*\times \C^*\cdot U\to \C^*\cdot U$ holomorphically extends the action $S^1\times \mathbf W(S,c)\to \mathbf W(S,c)$.
\end{proof}

\begin{remark}
    We omit the argument since we don't need the result, but Lemma \ref{lem: symplectic action} holds even if we drop the condition that $U$ contains $\mathbf{W}(S,c)$, thus forbidding an analytic continuation argument. It follows from a routine calculation, using the explicit formula for $\omega^{\C}$ in terms of complex metrics from \cite{ElE} (i.e., the extension of Fischer-Tromba's formula for the Weil-Petersson metric) and the expression (\ref{eq: pullback, complex}). Restricting to the real locus, this also gives an alternative argument for the $S^1$-invariance.
\end{remark}

\subsection{The unifying maps}

\begin{proof} [Proof of Theorem \ref{thm: main}]

    The main goal is to find an open connected subset $\Omega\subset H^0(\mathcal K_c^2)\times H^0(\mathcal K_{\overline c}^2) $ that is $\mathbb C^*$-invariant, such that $\mathrm{hol}|_\Omega$ is a local biholomorphism, and on which $\mathrm{hol}|_\Omega$ admits an inverse $s\colon \Omega\to \mathcal H^*(S,c)$ of $\pi$. Then, the desired mapping is just $\Phi:=\mathrm{hol}\circ s$. As commented after the theorem statement, uniqueness is automatic.

    First of all, we prove that for every point of $H^0(\mathcal K_c^2)\times H^0(\mathcal K_{\overline c}^2)$, every neighborhood contains a pre-compact neighborhood such that the intersection with every $\mathbb C^*$-orbit is either empty or connected; we will call such neighborhoods ``$\mathbb C^*$-good." This is trivial at the point $(0,0)\in \mathcal H(S,c)$ by taking neighborhoods of the form 
    \begin{equation}
    \label{eq: nbhd (0,0)}
        \{(q_1, \overline{q_2})\ |\ \|q_1\|<\eps_1, \|q_2\|<\eps_2\},
    \end{equation}
    where $\|\cdot \|$ denotes just any real norm on the spaces of holomorphic differentials. On the other hand, $\mathbb C^*$ acts freely (and with closed embedded orbits) on $H^0(\mathcal K_c^2)\times H^0(\mathcal K_{\overline c}^2)\setminus\{(0,0)\}$. Therefore, the infinitesimal orbit distribution is integrable (with orbits as integral curves), and by the Frobenius Theorem the foliation by $\mathbb C^*$-orbits is locally trivializable. In trivializations, it isn now immediate to construct neighborhoods such that the intersection with every orbit is connected. 

Now, observe that if $s\colon U\to \mathcal H^*(S,c)$ is an inverse of $\pi$, $U$ is $\mathbb C^*$-good, and both $(q_1,\overline{q_2})$ and $\zeta\cdot (q_1,\overline{q_2})$ lie in $U$, then $s(\zeta\cdot (q_1,\overline{q_2}))=\zeta\cdot s((q_1,\overline{q_2}))$. Indeed, for $\zeta$ sufficiently close to $1$,  $\zeta \cdot(q_1,\overline{q_2})= \pi (\zeta\cdot s((q_1,\overline{q_2}))) $ lies in $U$, and hence $s(\zeta\cdot (q_1,\overline{q_2}))=\zeta\cdot s((q_1,\overline{q_2}))$ is obtained by applying $s$ to both terms. This implies that the subset $$\{\zeta\in \mathbb C^* \ | \ \zeta\cdot (q_1,\overline{q_2})\in U \text{ and } s(\zeta\cdot (q_1,\overline{q_2}))=\zeta\cdot s((q_1,\overline{q_2})) \}$$
is open and closed in $\{\zeta\in \mathbb C^* \ | \ \zeta\cdot (q_1,\overline{q_2})\in U\}$, and hence it's the whole thing by the fact that $U$ is $\mathbb C^*$-good. As a consequence, if $s\colon U\to \mathcal H^*(S,c)$ is an inverse defined on a $\mathbb C^*$-good subset $U$, and if $\mathbb C^*\cdot U\subset \mathcal H^*(S,c)$, then $s$ extends uniquely to a biholomorphic inverse of $\pi$ on $\mathbb C^*\cdot U$. Indeed, take $s(\zeta\cdot (q_1,\overline{q_2}) ) := \zeta \cdot s((q_1,\overline{q_2}) )$ for all $ (q_1,\overline{q_2}) )\in U$.

Let $Z_1=\mathbf W(S,c)$ and $Z_2=\mathbf {U}(S,c)$.
By the Generalized Inverse Function Theorem (see Exercise 1.8.14 in \cite{GuillPoll}), there exists a global inverse $s\colon \Omega_0\to \mathcal H(S,c)$ with $s(\Omega_0)$ containing both $Z_1$ and $Z_2$. As discussed in Section \ref{subsec: key loci}, $\mathrm{hol}$ is a local biholomorphism at every point of $Z_1\cup Z_2$, hence we can assume that $\mathrm{hol}$ restricts to a local biholomorphism on $s(\Omega_0)$ and that $\vb*g(q_1, \overline{q_2}, u)$ is a positive metric for all $(q_1, \overline{q_2}, u)\in s(\Omega_0)$. 
We claim that, after possibly shrinking $\Omega_0$ further, $s$ extends $\mathbb C^*$-equivariantly to $\mathbb C^*\cdot \Omega_0$. Granting this, on $\mathbb C^*\cdot \Omega_0$, $s$ determines a global biholomorphic inverse of $\pi$. By Theorem \ref{thm: immersion}, $\mathrm{hol}$ is a local biholomorphism on $s(\Omega_0)$, hence $\Omega=\mathbb C^*\cdot\Omega_0$ and $\Phi=\mathrm{hol}\circ s$ give the theorem statement.

To realize the claim, we first extend $s$ to a $\mathbb C^*$-invariant neighborhood of $Z_1\cup Z_2\setminus\{(0,0)\}$. One can easily observe that the $\mathbb C^*$-action restricts to 
\[
Q=(H^0(S, \mathcal K_c^2)\setminus \{0\}) \times (H^0( \mathcal K_{\overline c}^2)\setminus \{0\})
\]
and that the action is free and proper over it: $Q/\mathbb C^*$ is therefore a complex manifold, and it contains $\pi(Z_1)/S^1$ and $\pi(Z_2)/S^1$ as disjoint closed embedded submanifolds. Now, cover each of these submanifolds with images of $\mathbb C^*$-good open subsets of $\Omega_0$ around points in $\pi(Z_k)$; refine these coverings with geodesically convex open subsets for some Riemannian metric on $Q/\mathbb C^*$: in this way, any two of them that intersect have convex intersection that intersects $\pi(Z_k)/S^1$. As a result, we obtain a covering of each $Z_k$ with a collection of $\mathbb C^*$-invariant open subsets over which $s$ can be extended to be $\mathbb C^*$-equivariant. Moreover, if any two of these open subsets intersect, then their intersection would be connected, and they would intersect on an open connected subset of $Z_k$ over which the inverses match. They therefore coincide on the whole intersection. We have hence constructed a $\mathbb C^*$-equivariant inverse $s$ on a $\mathbb C^*$-invariant neighborhood $\Omega_1$ of $\pi(Z_1)\cup\pi( Z_2)\setminus\{(0,0)\}$.
Up to shrinking the initial $\Omega_0$ to the union of some suitable $\mathbb C^*$-good open neighborhoods of the points of $\pi(Z_1)\cup\pi(Z_2)$, we can assume that these two inverses coincide on $\Omega_0\cap \Omega_1$. 

Finally, let $U_{0,0}$ be a $\mathbb C^*$-good neighborhood of $(0,0)$ of the form \eqref{eq: nbhd (0,0)} contained in $\Omega_0$. Observe that $s$ clearly glues to $U_{0,0}\cup \Omega_1$. We are left to show that $s$ extends $\mathbb C^*$-equivariantly to $\Omega\coloneqq\mathbb C^*\cdot U_{0,0}\cup \Omega_1$. This is simple to check: if $w\in \mathbb C^*\cdot U_{0,0}\cup \Omega_1$, then there exists $\zeta\in \mathbb C^*$ such that $\zeta\cdot w\in  U_{0,0}\cup \Omega_1$, thus
\[
s|_{\mathbb C^* \cdot U_{0,0}} (w)= \zeta^{-1}\cdot s(\zeta \cdot  w)= s|_{\Omega_1} (w).
\]
This completes the proof of the claim, and hence we're done.
\end{proof}

The deduction of Corollary \ref{cor: main cor} will follow by showing that, after restricting $\Omega$, $\Phi$ is a biholomorphism. To see this, we need the following lemma about the locus $\mathrm{AF}(S,c)$ in the identification $\mathrm{QF}(S)\cong \mathcal T(S)\times\mathcal T(\overline S)$.

\begin{lem}
\label{lem: global inj of hol}
    In the identification $\QF(S)\cong\mathcal T(S)\times\mathcal T(\overline S)$, for $(q, -\overline q, u)\in \mathbf U(S,c)$, denote  $\mathrm{hol}(q, -\overline q, u)=:([c^q_1],[ \overline c^q_2])$. Then,
    $$
    [c]=[c^q_1]\  \Longleftrightarrow\ [\overline c]=[\overline c^q_2] \ \Longleftrightarrow \ [c^q_1]=[ c^q_2]\ \Longleftrightarrow q=0.
    $$
\end{lem}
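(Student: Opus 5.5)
The plan is to prove three things: $q=0$ forces all the equalities; $[c^q_1]=[c]$ (and likewise $[\overline c^q_2]=[\overline c]$) forces $q=0$; and $[c^q_1]=[c^q_2]$ forces $q=0$. Chaining these gives all the equivalences in the statement.

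\emph{The case $q=0$.} The equation \eqref{eq: logBochner} with $q_1=\overline{q_2}=0$ has the real solution $u=0$. This is the totally geodesic Fuchsian immersion with first fundamental form $h$. Its holonomy is therefore $([c],[\overline c])$, and all three equalities hold.

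\emph{From $[c^q_1]=[c]$ to $q=0$.} Write $f=(f_1,\overline{f_2})$ for the complex harmonic map attached to $(q,-\overline q,u)$. By Section \ref{subsubsec: mimimal immersions}, $f_1$ and $\overline{f_2}$ are the two hyperbolic Gauss maps of the almost-Fuchsian minimal surface. For almost-Fuchsian surfaces these are $\rho$-equivariant diffeomorphisms onto $\Omega^+_\rho$ and $\Omega^-_\rho$ respectively. Hence $c^q_1$ is the complex structure pulled back by $f_1$.

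The proof of Theorem \ref{thm: A and B}, taken at $\zeta=1$, computes its Beltrami form relative to $c$: up to the convention of which slot carries $\zeta^{\pm2}$, $f_1$ is $c$-quasiconformal with Beltrami differential
\[
\mu=-\frac{\overline q}{e^{2u}h}.
\]
Let $t:=|q|e^{-2u}/h$. Then $|\mu|=t$, and $t<1$ everywhere by the almost-Fuchsian condition \eqref{eq: almost Fuchsian condition}.

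Suppose $[c^q_1]=[c]$. Then the identity $(S,c)\to(S,c^q_1)$, which has Beltrami form $\mu$, is homotopic to a conformal map. We may therefore apply the Reich--Strebel main inequality: for every holomorphic quadratic differential $\varphi$ on $(S,c)$,
\[
\int_S|\varphi|\le\int_S|\varphi|\,\frac{\left|1-\mu\varphi/|\varphi|\right|^2}{1-|\mu|^2}.
\]
The key choice is $\varphi=-q$. This gives $\mu\varphi/|\varphi|=t\ge0$, so the integrand on the right becomes $|q|\,\frac{(1-t)^2}{1-t^2}=|q|\,\frac{1-t}{1+t}$. If $q\neq 0$, then $t>0$ on an open set, and the right-hand side is strictly smaller than $\int_S|q|$, which is a contradiction. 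Hence $q=0$.

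The choice $\varphi=+q$ gives no contradiction, so the sign must be handled correctly. This means I will recheck carefully the orientation and conjugation conventions in the factorization $\alpha^\zeta,\beta^\zeta$ used in the proof of Theorem \ref{thm: A and B}. The argument for $[\overline c^q_2]=[\overline c]\Rightarrow q=0$ is identical, using $\overline{f_2}$ and its Beltrami form $\mu_1$, and the pairing with $\pm q$ for the conjugate surface.

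\emph{From $[c^q_1]=[c^q_2]$ to $q=0$.} By Bers' theorem, $\rho=\mathrm{hol}(q,-\overline q,u)$ then lies on the diagonal, i.e.\ it is Fuchsian. The quotient $\mathbb H^3/\rho$ then contains a totally geodesic surface, which is minimal and almost-Fuchsian. By Uhlenbeck's uniqueness of the almost-Fuchsian minimal surface in a given almost-Fuchsian manifold \cite{Uh}, our immersion must be that totally geodesic surface, up to reparametrization. Therefore $\II=0$, so $q=0$.

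\emph{Main obstacle.} The delicate step is the identification of $c^q_1$ with the pullback under $f_1$ of a conformal structure having Beltrami form exactly $-\overline q/(e^{2u}h)$. This requires getting the sign and the conjugation right, together with the fact that the Gauss maps of almost-Fuchsian surfaces are diffeomorphisms onto the domains of discontinuity. Everything else is a standard application of the Reich--Strebel inequality.
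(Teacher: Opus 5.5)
Your proposal is correct. For $q=0$ and for $[c^q_1]=[c^q_2]\Rightarrow q=0$ (Fuchsian holonomy plus Uhlenbeck's uniqueness) it follows the paper's argument. For the main step $[c]=[c^q_1]\Rightarrow q=0$ you take a different route.

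\emph{The paper's route.} It imports Krasnov--Schlenker to identify $c^q_1,c^q_2$ with the conformal classes of $\Is^*_\pm=\Is+\Is(\mathtt B\cdot,\mathtt B\cdot)\pm(q+\overline q)$. It notes that $\mathrm{id}\colon(S,c)\to(S,\Is^*_+)$ is harmonic, with Hopf differential $q$. If $c=\phi^*c^q_1$ with $\phi$ isotopic to the identity, then $\phi$ is a conformal, hence harmonic, map into the same negatively curved target in the same homotopy class. Uniqueness of harmonic maps then forces $\phi=\mathrm{id}$, so $\mathrm{id}$ is conformal and $q=0$.

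\emph{Your route.} You read the Beltrami coefficient off the factorization $g=U\lambda\,\alpha\beta$ in the proof of Theorem~\ref{thm: A and B} at $\zeta=1$, and then apply the Reich--Strebel main inequality.

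\emph{Comparison.} The two arguments apply different rigidity tools to the same fact. The Beltrami coefficient of $\mathrm{id}\colon(S,c)\to(S,c^q_i)$ is a positive function times $\overline\psi$ for a holomorphic $\psi$, which is exactly the shape of the Beltrami coefficient of a harmonic map.
\begin{itemize}
\item The paper's argument stays inside harmonic-map theory, but it needs the curvature of $\Is^*_+$ to be negative.
\item Yours needs no curvature information. It actually proves the more general fact that no Beltrami differential $w\,\overline\psi$ with $w>0$, $\psi\neq0$ holomorphic and $\|w\overline\psi\|_\infty<1$ is Teichmüller-trivial.
\item Both rely on the same external input: the conformal structures at infinity are those pulled back by the hyperbolic Gauss maps. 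You use this in the form that the Gauss maps are $\rho$-equivariant diffeomorphisms onto $\Omega^\pm_\rho$; the paper uses it through Krasnov--Schlenker. You should cite a source for that fact.
\end{itemize}

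Your stated ``main obstacle'' is not a real obstacle. The main inequality holds for \emph{every} holomorphic $\varphi$. So if $\mu=\epsilon\,\overline q/(e^{2u}h)$ for a constant $\epsilon=\pm1$, you simply take $\varphi=\epsilon q$, which gives $\mu\varphi/|\varphi|=t\ge 0$, and the argument goes through.

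In fact both signs genuinely occur. Matching $\Is^*_\pm$ with $\rho\,|dz+\mu\,d\overline z|^2$ gives $\rho=e^{2u}\lambda$ and $\mu=\pm\overline q/(e^{2u}h)$. So one of $c^q_1,c^q_2$ carries each sign, and your two cases ($f_1$ on $S$, and $\overline{f_2}$ on $\overline S$ paired with $\overline q$) cover both.
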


\begin{proof}
Clearly, if $q=0$, then all the other statements follow. If $[c^q_1]=[c^q_2]$, then the holonomy is Fuchsian, so the unique almost-Fuchsian immersion with that holonomy is the totally geodesic one, i.e., $q=0$.

Now, let $\Is$, $\II$, and $\mathtt B$ be the fundamental forms and the shape operator of the almost-Fuchsian immersion corresponding to $(q, -\overline q, u)$. By the work of Krasnov-Schlenker \cite{KrasnovSchlenker2008}, $c^{q}_1$ and $c^q_2$ give conformal structures (up to isotopy) for the Riemannian metrics $\Is_+^* = \Is((\mathrm{Id}+ \mathtt B)\cdot, (\mathrm{Id}+ \mathtt B)\cdot)$ and $\Is_-^*=\Is((\mathrm{Id}- \mathtt B)\cdot, (\mathrm{Id}- \mathtt B)\cdot)$ respectively. Observe that $\Is+\Is(\mathtt B\cdot, \mathtt B\cdot)$ is conformal to $\Is$, and that
 \[
 \Is_+^*= \Is+\Is(\mathtt B\cdot, \mathtt B\cdot) + q+\overline q \qquad \qquad \Is_-^*= \Is+\Is(\mathtt B\cdot, \mathtt B\cdot) - q-\overline q.
 \]
 By holomorphicity of the Hopf differentials, the identity maps $id\colon (S, \Is)\to (S, \Is_+^*)$ and $id\colon (S, \Is)\to (S, \Is_-^*)$ are harmonic. 

  Assuming that $c$ is conformal to $c^q_1$, we prove that $q=0$. The proof will follow in the analogous way for $c^{q}_2$. If $c=\phi^*(c^q_1)$ for $\phi$ isotopic to the identity, then $\phi\colon (S,c)\to (S, \Is^*_+)$ is harmonic and conformal (in fact, $\phi\colon (S,\phi^*(\Is^*_+)) \to (S, \Is^*_+) $ is an isometry). By construction, the curvature of $\Is_+^*$ is $-1+ \frac 2 {det(\mathtt B)}<0$, hence the harmonic map $(S,c)\to (S, \Is^*_+)$ isotopic to the identity is unique. Thus, $\phi=id$ and $id$ is conformal, hence $q=0$. 
\end{proof}

\begin{prop}
\label{prop: global inj of hol}
    The restriction of $\Phi$ on $$\pi(\mathbf W(S,c))\cup \pi(\mathbf U(S,c))\cup  \pi(\mathbf{Be}(S,c)) \cup \pi(\overline{\mathbf{Be}}(S,c) ) $$ is injective.
\end{prop}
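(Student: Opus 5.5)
Here $\mathbf{Be}(S,c)$ denotes the locus $\{(q,0,0): q\in \overline{\mathcal Be}(\mathcal T(\overline S))\}\subset \mathbf S(S,c)$, and $\overline{\mathbf{Be}}(S,c)$ its conjugate analogue. The plan is to compute the image of each of the four loci under $\Phi$ in the coordinates $\QF(S)\cong \TSTS$. We then show two things: $\Phi$ is injective on each locus separately, and the images of different loci meet only at the common point $\Phi(0,0)=([c],[\overline c])$.

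\emph{Images of the four loci.}
\begin{itemize}
\item On $\pi(\mathbf W(S,c))$, $\Phi(q,\overline q)=([c'],[\overline{c'}])$ with $[c']=\mathcal W(q)$. This lands in the Fuchsian diagonal and is injective because $\mathcal W$ is a bijection.
\item On $\pi(\mathbf{Be}(S,c))$, by the diagram in Section \ref{subsec: key loci}, $\mathrm{hol}$ restricts to the biholomorphism onto $\{[c]\}\times \mathcal T(\overline S)$ inverse to $\overline{\mathcal Be}$ (up to the rescaling by $-\tfrac12$). Hence it is injective there.
\item Symmetrically, $\pi(\overline{\mathbf{Be}}(S,c))$ maps injectively onto $\mathcal T(S)\times\{[\overline c]\}$.
\item On $\pi(\mathbf U(S,c))$, injectivity is Uhlenbeck's uniqueness: an almost-Fuchsian representation has a unique equivariant almost-Fuchsian immersion up to isotopy. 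Since the first fundamental form is conformal to the fixed $c$, two such immersions have the same $(q,u)$, because $\mathbf U(S,c)\to \mathrm{AF}(S,c)$ is a bijection as recorded in Section \ref{subsubsec: almost fuchsian}. In particular, $\Phi(q,-\overline q)$ determines $q$.
\end{itemize}

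\emph{Pairwise intersections of the images.}
\begin{itemize}
\item The diagonal meets $\{[c]\}\times\mathcal T(\overline S)$ only at $([c],[\overline c])$. Its preimage in $\pi(\mathbf W)$ is $q=0$, since $\mathcal W(0)=[c]$. In $\pi(\mathbf{Be})$ the preimage is the point corresponding to the Fuchsian structure, namely $q=0$: the Bers embedding sends $[\overline c]$ to $0$, and indeed $(0,0,0)$ has holonomy $([c],[\overline c])$. The same holds for $\overline{\mathbf{Be}}$.
\item $\{[c]\}\times \mathcal T(\overline S)$ and $\mathcal T(S)\times\{[\overline c]\}$ meet only at $([c],[\overline c])$, again with preimage $(0,0)$ in both loci.
\item For the almost-Fuchsian locus we use Lemma \ref{lem: global inj of hol}. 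If $\Phi(q,-\overline q)$ is Fuchsian, then $[c^q_1]=[c^q_2]$, so $q=0$. If it lies in $\{[c]\}\times\mathcal T(\overline S)$, then $[c^q_1]=[c]$, so $q=0$. If it lies in $\mathcal T(S)\times\{[\overline c]\}$, then $[\overline c^q_2]=[\overline c]$, so $q=0$. In every case the only common image point is $\Phi(0,0)$, and all four loci contain $(0,0)$ as the unique preimage of that point.
\end{itemize}
Combining the per-locus injectivity with these intersection statements, two points of the union with equal image are either in the same locus, hence equal, or both equal to $(0,0)$.

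\emph{Expected obstacle.} The main care is needed in checking that the identification $\mathrm{hol}(q,0,0)=\mathcal S(-\tfrac12 q)$ restricted to $\overline{\mathcal Be}(\mathcal T(\overline S))$ really gives a bijection onto $\{[c]\}\times\mathcal T(\overline S)$ with $0\mapsto[\overline c]$. This needs a quasi-Fuchsian Schwarzian parametrization to be injective on that slice. Holonomy alone does not determine a $\CP^1$-structure in general, so here I would invoke Bers' theorem: the slice $\{[c]\}\times \mathcal T(\overline S)$ is parametrized holomorphically and injectively by the Bers embedding. I also need to track the factor $-\tfrac12$, since $\mathbf{Be}(S,c)$ should be defined to be compatible with $\mathcal S(-\tfrac12 q)$.
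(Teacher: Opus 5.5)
Your proposal is correct and follows the same route as the paper. It shows injectivity of $\Phi$ on each of the four loci separately (via Hitchin--Wolf, Bers' theorem, and Uhlenbeck's uniqueness), then uses Lemma \ref{lem: global inj of hol} together with the elementary intersections of the Fuchsian diagonal and the two Bers slices in $\TSTS$ to see that the images pairwise meet only at $\Phi(0,0)$, whose unique preimage in each locus is $(0,0)$; the paper compresses this into two lines, and your version just spells out the details, including the harmless bookkeeping of the factor $-\tfrac12$ on the Bers locus.
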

\begin{proof}
By construction $\Phi$ is injective when restricted individually to each of the terms of the union. 
    By Lemma \ref{lem: global inj of hol}, their images through $\Phi$ pairwise meet only at $\Phi(0,0)=([c],[\overline c])\in \TSTS\cong \QF(S)$, hence the injectivity holds.
\end{proof}

We are now ready to prove Corollary \ref{cor: main cor}.
\begin{proof}[Proof of Corollary \ref{cor: main cor}]
  As mentioned before, to prove the statement, it is sufficient to prove that $\Phi$ is injective on an open subset $\Omega_3\subset H^0(\mathcal K_c^2)\times H^9(\mathcal K_{\overline c}^2)$ with $$ \Omega_3\supset  Z:= \pi(\mathbf W(S,c))\cup \pi(\mathbf {Be}(S,c))\cup\pi( \overline{\mathbf {Be}}(S,c))\cup\pi( \mathbf U(S,c)).$$  By Theorem \ref{thm: main}, $\Phi$ is a local biholomorphism at every point of $Z$ and, by Proposition \ref{prop: global inj of hol}, $\Phi$ is injective on $Z$.

By the Generalized Implicit Function Theorem (see \cite{GuillPoll}), $\Phi$ restricts to open embeddings around some closed neighbourhoods 
   \[
   \overline {\Omega^1_3}\supset \pi( \mathbf W(S,c)),\quad 
  \overline {\Omega_3^2}\supset \pi(\mathbf {Be}(S,c)),\quad
   \overline {\Omega_3^3}\supset \pi(\overline{\mathbf {Be}}(S, c)),\quad
  \overline {\Omega_3^4}\supset \pi( \mathbf U(S,c)).  
   \]
By defining 
\[
\mathcal A=int \left( \left\{\rho\in \bigcup_j \Phi(\Omega_3^j)\ |\ \text{if $\rho\in \Phi( \Omega_3^j) \cap  \Phi(\Omega_3^k)$, then } (\Phi|_{\Omega_3^j})^{-1}(\rho)= (\Phi|_{\Omega_3^k})^{-1}(\rho) \right\} \right) 
\]
we get that $\Phi$ restricts to a biholomorphism $\Phi\colon \Omega_3'\to \mathcal A$, where $\Omega_3'$ is an open subset of $\mathcal H^*(S,c)$ containing ${Z}$ (see \cite[Section 6.6]{ES_Chmaps} for a detailed argument of this type in a more complicated framework). By denoting $\Psi= (\Phi|_{\Omega'_3})^{-1}$, we get the statement of the corollary.
\end{proof}

\section{Higgs bundles}\label{sec: Higgs bundles}
Here we recall the necessary background on Higgs bundles and complex harmonic bundles, and then prove Theorem \ref{thm: twistor lines}. Throughout, fix a surface $\Sigma$ with complex structures $(c_1,\overline{c_2}).$ As in Section \ref{sec: definition}, we set $\Pi_+$ and $\Pi_-$ to be the projections for the splitting $\C T^*\Sigma = (T^*\Sigma)_{c_1}^{1,0}\oplus (T^*\Sigma)_{c_2}^{0,1}.$
\subsection{Complex harmonic bundles}
We recall the notion of complex harmonic bundles, as introduced and studied in \cite{ES_Chmaps}. In \cite{ES_Chmaps}, we worked with complex harmonic $G$-bundles, i.e., the principal bundle analog of complex harmonic bundles, and obtained vector bundles from $G$-bundles by taking representations and associated bundles. Here we work in the vector bundle context. Since this isn't the main focus of the current paper, we're a bit terse with our exposition; for basics on vector bundles and equivariant maps that we assume of the reader, see \cite[Section 4]{ES_Chmaps} and \cite{Li}.
\begin{defn}
    A \textbf{complex harmonic bundle} over $(\Sigma,c_1,\overline{c_2})$ is a tuple $(E,Q,\nabla, \phi_1,\overline{\phi_2})$, where 
    \begin{enumerate}
        \item $E$ is a complex vector bundle,
        \item $Q$ is a non-degenerate bilinear form on $E$,
        \item $\nabla$ is $Q$-compatible connection on $E$,
        \item and $\phi_1\in \Omega_{c_1}^{1,0}(\textrm{End}_0(E))$ and $\overline{\phi_2}\in \Omega_{c_2}^{0,1}(\textrm{End}_0(E))$ are $Q$-symmetric sections satisfying $\nabla\phi_1=\nabla\overline{\phi_2}=0,$
    \end{enumerate}
  and such that the connection $$D:=\nabla + \phi_1 +\overline{\phi_2}$$ is flat.
\end{defn}

We consider the complex manifold $\mathbb{X}=\textrm{SL}(n,\C)/\textrm{SO}(n,\C)$, identified with the space of symmetric matrices in $\textrm{SL}(n,\C)$. The Killing form on the Lie algebra determines a holomorphic Riemannian metric on $\mathbb{X}$. In our model, at a matrix $A$ this is $(X,Y)\mapsto \frac{n}{2}\textrm{tr}(A^{-1}XA^{-1}Y).$ 

For more details on all of the discussion below, see \cite[Section 4.1]{ES_Chmaps}.
A complex harmonic map $f$ to $\mathbb{X}=\textrm{SL}(n,\C)/\textrm{SO}(n,\C)$, equivariant for some representation $\rho:\pi_1(\Sigma)\to \textrm{SL}(n,\C)$, produces a complex harmonic bundle as follows. We begin with the associated bundle $$E_\rho=\widetilde{\Sigma}\times_\rho \C^n = \{(x,v)\in \widetilde{\Sigma}\times \C^n\}/\sim,$$ where $\sim$ identifies $(x,v)$ with $(\gamma\cdot x, \rho(\gamma)\cdot v)$, $\gamma\in \pi_1(\Sigma)$. Upon associating matrices in $\mathrm{SL}(n,\C)$ with non-degenerate bilinear forms on $\C^n$, an equivariant map to $\mathbb{X}$ is equivalent to a complex bilinear form $Q_f$ on $E_\rho$. The bundle $E_\rho$ comes automatically with a flat connection $D_\rho$ (induced from the trivial connection on $\widetilde{\Sigma}\times \C^n$), which splits uniquely as $$D_\rho=\nabla_f + \psi_f,$$ where $\nabla_f$ is $Q_f$-compatible and $\psi_f$ is $Q_f$-symmetric. We then set $\phi_1^f=\Pi_+\circ \psi_f$ and $\overline{\phi_2}^f=\Pi_-\circ \psi_f$. The standard Maurer-Cartan isomorphism identifies the pullback bundle $f^*T\mathbb{X}$ with $\textrm{End}_0(E)$, and from the uniquely defining properties it is seen that the pullback of the Levi-Civita connection of $T\mathbb{X}$ via $f$ is exactly the connection on $\textrm{End}_0(E)$ induced by $\nabla_f$, and the lift $\widetilde{\psi}_f$ of $\psi_f$ to the universal cover is precisely $\widetilde{\psi}_f=\frac{1}{2}f^{-1}df$. Thus, the complex harmonic map equation is equivalent to $\nabla_f\phi_1^f=\nabla_f\overline{\phi_2}^f=0.$ Putting it all together, $(E_\rho, Q_f, \nabla_f, \phi_1^f, \overline{\phi_2}^f)$ is a complex harmonic bundle; in this construction, the flatness of the connection $D_\rho=\nabla_f + \phi_1^f + \overline{\phi_2}^f$ is manifest.

Conversely, a complex harmonic bundle $(E,Q,\nabla, \phi_1,\overline{\phi_2})$ produces a complex harmonic map: lift everything to the universal cover $\widetilde{\Sigma}$ and pick a point $p\in \widetilde{\Sigma}$. Let $\widetilde{E}$ be the lifted bundle and $\widetilde{Q}$ be the lifted bilinear form. Associating $z\in \widetilde{\Sigma}$ with the parallel transport of $\widetilde{Q}|_{\widetilde{E}_z}$ to $\widetilde{E}_p$ (with respect to $D:=\nabla + \phi_1 +\overline{\phi_2}$) yields a complex harmonic map $\mathbb{X}$, in a way that undoes the construction above. 

From $\widetilde{\psi}_f=\frac{1}{2}f^{-1}df$, we can write the pullback metric as 
\begin{equation}\label{eq: pullback higgs}
    \textrm{tr}(\phi_1\overline{\phi_2})+\frac{1}{2}\textrm{tr}(\phi_1^2)+\frac{1}{2}\textrm{tr}(\overline{\phi_2}^2).
\end{equation}
Hence, the Hopf differentials are expressed $q_1=\frac{1}{2}\textrm{tr}(\phi_1^2)$, $\overline{q_2}=\frac{1}{2}\textrm{tr}(\overline{\phi_2}^2).$

The flatness condition can be expressed via the equation 
\begin{equation}\label{eq: flatness}
    F(\nabla)+[\phi_1,\overline{\phi}_2]=0.
\end{equation}
From this equation, we see that we have a $\C^*$-action on the set of complex harmonic bundles, $$\zeta\cdot (E,Q,\nabla, \phi_1,\overline{\phi_2})=(E,Q,\nabla, \zeta\phi_1,\zeta^{-1}\overline{\phi_2}), \qquad \zeta \in \C^*.$$
Indeed, the holomorphicity conditions are clearly preserved, and from \eqref{eq: flatness}, so is the flatness condition.

Finally, we recall that a $\textrm{SL}(n,\C)$-Higgs bundle on a Riemann surface $(\Sigma,c)$ is a triple $(E,\overline{\partial}_E,\phi)$, where $(E,\overline{\partial}_E)$ is a holomorphic vector bundle and $\phi\in \Omega_{c_1}^{1,0}(\textrm{End}_0(E))$ is holomorphic. We call it a $\textrm{SL}(n,\R)$-Higgs bundle if we also have a non-degenerate bilinear form $Q$ on $E$ such that $\phi$ is $Q$-symmetric. Hitchin's self-duality equations ask for a Hermitian metric $h$ on $E$, called a harmonic metric, such that $\overline{\partial}_E+(\overline{\partial}_E)^{*_h}+\phi+\phi^{*_h}$ is flat. In the $\textrm{SL}(n,\R)$ case, we also impose that $h$ is compatible with $Q$. Then, $(E, Q, \overline{\partial}_E+(\overline{\partial}_E)^{*_h}, \phi, \overline{\phi}^{*_h})$ is a complex harmonic bundle. The complex harmonic map lands in a copy of the real symmetric space $\textrm{SL}(n,\R)/\textrm{SO}(n,\R)$ inside $\mathbb{X}.$

\subsection{Theorem \ref{thm: twistor lines}}\label{sec: proof of twistor line theorem}

Recall from Section \ref{sec: intro, twistor} that a real twistor line for the Deligne-Hitchin moduli space is a family of flat connections of the form 
$$D_\zeta=\overline{\partial}_E+(\overline{\partial}_E)^{*_h}+\zeta\phi +\zeta^{-1}\phi^{*_h}, \qquad \zeta\in \C^*,$$
where $(E,\overline{\partial}_E,\phi)$ is a $\textrm{SL}(n,\C)$-Higgs bundle and $h$ is a harmonic metric. There are excellent expositions on the Deligne-Hitchin moduli space and on twistor lines; we will not need much of this theory here, so we defer the reader to \cite{BeckHellerRoeserEnergy}, \cite{BiswasHellerRoeserReal}, and \cite{SimpsonHodgeFiltration}.

The connection to complex harmonic bundles comes from the $\C^*$-action introduced above. Indeed, a real twistor line is obtained by taking the flat connections along the $\C^*$-orbit of a complex harmonic bundle $(E, Q, \overline{\partial}_E+(\overline{\partial}_E)^{*_h}, \phi, \overline{\phi}^{*_h})$ arising from an $\textrm{SL}(n,\R)$-Higgs bundle. In this way, after integrating out complex harmonic bundles to give complex harmonic maps, we explicitly associate real twistor lines with $\C^*$-families of complex harmonic maps.

Theorem \ref{thm: twistor lines} shows that certain $D_\zeta$'s are partial opers, once we choose the right Riemann surface structure. We do not give a definition here, we just note that a (branched) oper is equivalent to a (branched) complex projective structure (see \cite[Section 3]{silva2024conformal} for detailed explanation). We phrased Theorem \ref{thm: twistor lines} using opers just to make it precise.

We've almost built up enough theory to prove Theorem \ref{thm: twistor lines}, we just need to discuss the hypotheses. Rather than starting with a specific type of Higgs bundle, we have a geometric condition on the harmonic map. A branched immersion between surfaces $f$ is a map that's an immersion on the complement of a discrete set, and which on the discrete set has branch point singularities locally of the form $z\mapsto z^n$. The key fact we'll use below is that the Beltrami form of a branched immersion $\mu_f$ satisfies $|\mu_f|<1$ (at the branch points, $\mu_f=0$). In the case of a harmonic map onto a geodesic, $|\mu_f|=1$ identically. Without either $|\mu_f|<1$ or $|\mu_f|\equiv1$ identically, the proof below will break. 

On a closed surface, the Higgs bundles giving rise to branched immersions and geodesic parameterizations are classified. This is discussed in detail in \cite{DMS}. From \cite{Hitchin:1986vp}, a $\textrm{SL}(2,\R)$-Higgs bundle is equivalent to the data of a line bundle $L$ (an isotropic line for the bilinear form $Q$, so that $E=L\oplus L^{-1}$) and two sections $\alpha\in H^0(\Sigma,L^2\otimes \mathcal{K})$ and $\beta\in H^0(\Sigma, L^{-2}\otimes \mathcal{K})$ ($\alpha$, interpreted as a section of $\textrm{Hom}(L^{-1},L)\otimes\mathcal{K}$, is a component of the Higgs field in the splitting $E=L\oplus L^{-1}$, and similar for $\beta$). The product $\alpha\beta$ is a scalar multiple of the Hopf differential. Both $\alpha$ and $\beta$ come with vanishing divisor $(\alpha)$ and $(\beta)$. From \cite{BBDH} (proof of necessity, most of the sufficiency proof) and \cite{silva2024conformal} (completing the sufficiency proof), the harmonic map is a branched immersion if and only if $(\alpha)\leq (\beta)$ with strict inequality somewhere, or the reverse relation holds (in which case the orientation is swapped). Similarly, the harmonic map parametrizes a geodesic if and only if $(\alpha)=(\beta).$ 

Finally, note that Theorem \ref{thm: twistor lines} does not require the underlying surface to be closed. Instead, we assumed the existence of a harmonic metric. When the surface is closed, one can start with a polystable Higgs bundle, and then by \cite{Hitchin:1986vp} there exists a (usually unique) harmonic metric. About the results on Higgs bundles and branched immersions, analogous results can probably be proved on non-compact surfaces for ``complete" harmonic maps (see \cite{Wan1992ConstantMC}) using generalized maximum principles.

\begin{proof}[Proof of Theorem \ref{thm: twistor lines}]
    One essentially just has to repeat the proof of Theorem \ref{thm: A and B}. Recalling the discussion at the beginning of Section \ref{subsec: Bochner}, for any harmonic immersion $f$ to $\mathbb{H}^2$ we can write the pullback metric as 
   $$ \left(U +\frac 1 U \frac{|q|^2}{ h^2}\right) h +q+\overline{q}.$$ If $f$ is just a branched immersion, then the formula extends over the branch points (we get zero identically). If $f$ parametrizes a geodesic, the same formula holds with $U=\frac{|q|}{h}$. Thus, when we apply the $\C^*$-action, the associated complex harmonic map $f^\zeta=(f_1^\zeta,\overline{f}_z^\zeta)$ has pullback metric 
   \begin{equation}\label{eq: pullback, C star}
       \left(U +\frac 1 U \frac{|q|^2}{ h^2}\right) h +\zeta^2q+\zeta^{-2}\overline{q}.
   \end{equation}
   When we integrate the data of our complex bilinear form using the flat connection, there is an ambiguity not just up to composition with elements of $\textrm{PSL}(2,\C)$, but also the ordering of $f_1^\zeta$ and $\overline{f_2}^\zeta$ is not pre-determined. In the branched immersion case, we specify the initial maps so that $f_1$ is orientation preserving and $\overline{f_2}$ is orientation reversing. In the geodesic case, recall we exclude $\zeta=1$ from the theorem, so instead we specify that $\overline{f_1}^\zeta$ is orientation preserving and $\overline{f_2}^\zeta$ is orientation reversing for some fixed $\zeta$ of norm larger than $1$ and for some fixed $\zeta$ of norm smaller than $1$. From here, Theorem \ref{thm: A and B} goes through verbatim to show that on $A_f$, $f_1^\zeta$ is a branched immersion, and on $B_f$, $\overline{f}_2^\zeta$ is a branched immersion. The only difference is that at zeros of \eqref{eq: pullback, C star}, the complex lines from the proof of Theorem \ref{thm: A and B} degenerate, and hence the maps have branch points.

   Since $D_\zeta$ is the flat connection associated with the $f^\zeta$, the usual correspondence between (branched) projective structures and (branched) opers (see \cite[Section 3]{silva2024conformal}) allows us to say that $D_\zeta$ is a (branched) oper over the relevant Riemann surface.

Holomorphicity of \eqref{eq: pullback, C star} in $\zeta$ is clear. The holomorphicity of the maps $A_f\to \mathcal{CP}(\Sigma)$ and $B_f\to \mathcal{CP}(\overline{\Sigma})$ thus follows from Lemma \ref{lem: holo dependence Bonsante} and the standard fact that the holonomy map on complex projective structures is a local biholomorphism.
\end{proof}

\subsubsection{Relation with work of Gothen-Silva}\label{subsub: gothen silv}
In \cite{silva2024conformal}, Gothen-Silva consider conformal limit rays. That is, fix
$\hbar\in \C^*$, and let $R>0$ range in $(0, |\hbar|^{-1})$. For a polystable Higgs bundle $(E,\overline{\partial}_E,\phi)$ on a closed surface, let $h_R$ be a harmonic metric solving the self-duality equations for $(E,\overline{\partial}_E,R\phi)$. The conformal limit concerns the ray
$$R\mapsto D_{\hbar, R}=\overline{\partial}_E+(\overline{\partial}_E)^{*_{h_R}}+\hbar^{-1}\phi + \hbar R^2\phi^{*_{h_R}}$$ and its behaviour as $R\to 0$. The significance of such rays stems from Gaiotto's conjecture relating the Hitchin section to opers \cite{Gai} (solved in \cite{DFKMMN}, \cite{collier2024conformallimitsparabolicslnchiggs}). Gothen-Silva show that when the hypothesis of Theorem \ref{thm: twistor lines} is satisfied, these rays naturally give a continuous family of complex projective structures. Related to our statement of Theorem \ref{thm: twistor lines}, the condition $|R\hbar|\leq 1$ is saying $\zeta\in A_{f_R}$, where $f_R$ is the harmonic map. We recover their result by applying Theorem \ref{thm: twistor lines} with $\zeta_{\hbar,R}=(\hbar R)^{-1}$ along the ray of Higgs bundles $R\mapsto (E,\overline{\partial}_E,R\phi)$. 

Gothen-Silva prove their main result by manipulating the expressions defining the flat connections and showing that they give (partial) opers for the right choice of complex structure. They then apply the correspondence between (partial) opers and (branched) $\mathbb{CP}^1$-structures. We expect that their method can be pushed through just fine to prove Theorem \ref{thm: twistor lines}. On our end, Theorem \ref{thm: twistor lines} and the manipulations from \cite{silva2024conformal} are given a satisfying geometric interpretation thanks to complex harmonic maps.

\bibliographystyle{plain}
\bibliography{bibliography1}

@article{silva2024conformal,
  title={The conformal limit and projective structures},
  author={Silva, Pedro M. and Gothen, Peter B.},
  journal={International Mathematics Research Notices},
  volume={2024},
  number={16},
  pages={11812--11831},
  year={2024},
  publisher={Oxford University Press},
  doi={10.1093/imrn/rnae142},
  url={https://doi.org/10.1093/imrn/rnae142}
}

@article{KrasnovSchlenker2008,
  author    = {Krasnov, Kirill and Schlenker, Jean-Marc},
  title     = {On the Renormalized Volume of Hyperbolic 3-Manifolds},
  journal   = {Communications in Mathematical Physics},
  volume    = {279},
  number    = {3},
  pages     = {637--668},
  year      = {2008},
  doi       = {10.1007/s00220-008-0423-7},
  url       = {https://doi.org}
}

@incollection{SimpsonHodgeFiltration,
  author    = {Simpson, Carlos},
  title     = {The Hodge filtration on nonabelian cohomology},
  booktitle = {Algebraic Geometry---Santa Cruz 1995},
  series    = {Proceedings of Symposia in Pure Mathematics},
  volume    = {62},
  part      = {2},
  pages     = {217--281},
  publisher = {American Mathematical Society},
  address   = {Providence, RI},
  year      = {1997},
  doi       = {10.1090/pspum/062.2/1492538},
}

@article{BBDH,
author = {Biswas, Indranil and Bradlow, Steven and Dumitrescu, Sorin and Heller, Sebastian},
title = {Uniformization of branched surfaces and Higgs bundles},
journal = {International Journal of Mathematics},
volume = {32},
number = {13},
pages = {2150096},
year = {2021},
doi = {10.1142/S0129167X21500968},
}

@article{Gai,
    author = "Gaiotto, D.",
    title = "{Opers and TBA}",
    eprint = "1403.6137",
    archivePrefix = "arXiv",
    primaryClass = "hep-th",
    month = "3",
    year = "2014"
}

@misc{collier2024conformallimitsparabolicslnchiggs,
      title={Conformal limits for parabolic SL(n,C)-Higgs bundles}, 
      author={Brian Collier and Laura Fredrickson and Richard Wentworth},
      year={2024},
      eprint={2407.16798},
      archivePrefix={arXiv},
      primaryClass={math.DG},
      url={https://arxiv.org/abs/2407.16798}, 
}

@misc{DMS,
      title={Domination between non-Fuchsian representations and anti-de Sitter geometry}, 
      author={Farid Diaf and Abderrahim Mesbah and Nathaniel Sagman},
      year={2025},
      eprint={2511.10570},
      archivePrefix={arXiv},
      primaryClass={math.DG},
      url={https://arxiv.org/abs/2511.10570}, 
}

@article{ElE,
  title={A metric uniformization model for the Quasi-Fuchsian space},
  author={El Emam, Christian},
  journal={Transactions of the American Mathematical Society},
  volume={379},
  number={04},
  pages={2569--2604},
  year={2026}
}

@article{Hitchin:1986vp,
    author = "Hitchin, Nigel J.",
    title = "{The Self duality equations on a Riemann surface}",
    doi = "10.1112/plms/s3-55.1.59",
    journal = "Proc. Lond. Math. Soc.",
    volume = "55",
    pages = "59--131",
    year = "1987"
}

@article{SchoenYauUnivalent,
author = {Schoen, Richard and Yau, Shing-Tung},
title = {On Univalent Harmonic Maps Between Surfaces},
journal = {Inventiones Mathematicae},
volume = {44},
number = {3},
pages = {265--278},
year = {1978},
doi = {10.1007/BF01403164}
}

@article {Li,
    AUTHOR = {Li, Qiongling},
     TITLE = {An introduction to {H}iggs bundles via harmonic maps},
   JOURNAL = {SIGMA Symmetry Integrability Geom. Methods Appl.},
  FJOURNAL = {SIGMA. Symmetry, Integrability and Geometry. Methods and
              Applications},
    VOLUME = {15},
      YEAR = {2019},
     PAGES = {Paper No. 035, 30},
      ISSN = {1815-0659},
   MRCLASS = {53C43 (53C07 53C21)},
  MRNUMBER = {3947036},
MRREVIEWER = {Marco\ Castrillon Lopez},
       DOI = {10.3842/SIGMA.2019.035},
       URL = {https://doi.org/10.3842/SIGMA.2019.035},
}

@article {BEE,
    AUTHOR = {Bonsante, Francesco and El Emam, Christian},
     TITLE = {On immersions of surfaces into {$SL(2,\Bbb C)$} and geometric
              consequences},
   JOURNAL = {Int. Math. Res. Not. IMRN},
  FJOURNAL = {International Mathematics Research Notices. IMRN},
      YEAR = {2022},
    NUMBER = {12},
     PAGES = {8803--8864},
      ISSN = {1073-7928,1687-0247},
   MRCLASS = {53C42 (32Q30 53C21 57K31)},
  MRNUMBER = {4436196},
MRREVIEWER = {Erasmo\ Caponio},
       DOI = {10.1093/imrn/rnab189},
       URL = {https://doi.org/10.1093/imrn/rnab189},
}

@article {DFKMMN,
    AUTHOR = {Dumitrescu, Olivia and Fredrickson, Laura and Kydonakis,
              Georgios and Mazzeo, Rafe and Mulase, Motohico and Neitzke,
              Andrew},
     TITLE = {From the {H}itchin section to opers through nonabelian
              {H}odge},
   JOURNAL = {J. Differential Geom.},
  FJOURNAL = {Journal of Differential Geometry},
    VOLUME = {117},
      YEAR = {2021},
    NUMBER = {2},
     PAGES = {223--253},
      ISSN = {0022-040X,1945-743X},
   MRCLASS = {53C07 (14D21 58E15 81T13)},
  MRNUMBER = {4214341},
MRREVIEWER = {Markus\ R\"{o}ser},
       DOI = {10.4310/jdg/1612975016},
       URL = {https://doi.org/10.4310/jdg/1612975016},
}

@article {D,
    AUTHOR = {Donaldson, S. K.},
     TITLE = {Twisted harmonic maps and the self-duality equations},
   JOURNAL = {Proc. London Math. Soc. (3)},
  FJOURNAL = {Proceedings of the London Mathematical Society. Third Series},
    VOLUME = {55},
      YEAR = {1987},
    NUMBER = {1},
     PAGES = {127--131},
      ISSN = {0024-6115},
   MRCLASS = {58E20 (32L15 53C05)},
  MRNUMBER = {887285},
MRREVIEWER = {Mitsuhiro Itoh},
       DOI = {10.1112/plms/s3-55.1.127},
       URL = {https://doi.org/10.1112/plms/s3-55.1.127},
}

@article {C,
      title={Finite order automorphisms of Higgs bundles: theory and application}, 
      author={Brian Collier},
   JOURNAL = {PhD thesis},
    YEAR={2016}
}

@article{Kuwert1996,
  author  = {Kuwert, Ernst},
  title   = {Harmonic maps between flat surfaces with conical singularities},
  journal = {Mathematische Zeitschrift},
  volume  = {221},
  number  = {3},
  pages   = {421--436},
  year    = {1996},
  mrnumber = {1381590},
  zbl      = {0871.58028}
}

@article{PanWolf2026,
  author  = {Pan, Huiping and Wolf, Michael},
  title   = {Ray structures on {Teichm{\"u}ller} space},
  journal = {Acta Mathematica},
  volume  = {236},
  number  = {1},
  pages   = {51--208},
  year    = {2026},
  doi     = {10.4310/ACTA.2026.v236.n1.a2},
}

@article {SU,
    AUTHOR = {Bers, Lipman},
     TITLE = {Simultaneous uniformization},
   JOURNAL = {Bull. Amer. Math. Soc.},
  FJOURNAL = {Bulletin of the American Mathematical Society},
    VOLUME = {66},
      YEAR = {1960},
     PAGES = {94--97},
      ISSN = {0002-9904},
   MRCLASS = {30.00},
  MRNUMBER = {111834},
MRREVIEWER = {H.\ L.\ Royden},
       DOI = {10.1090/S0002-9904-1960-10413-2},
       URL = {https://doi.org/10.1090/S0002-9904-1960-10413-2},
}

@incollection {Uh,
    AUTHOR = {Uhlenbeck, Karen K.},
     TITLE = {Closed minimal surfaces in hyperbolic {$3$}-manifolds},
 BOOKTITLE = {Seminar on minimal submanifolds},
    SERIES = {Ann. of Math. Stud.},
    VOLUME = {103},
     PAGES = {147--168},
 PUBLISHER = {Princeton Univ. Press, Princeton, NJ},
      YEAR = {1983},
      ISBN = {0-691-08324-X; 0-691-08319-3},
   MRCLASS = {53C42 (32G15 53A10)},
  MRNUMBER = {795233},
MRREVIEWER = {R.\ Osserman},
}

@article {W2,
    AUTHOR = {Wolf, Michael},
     TITLE = {The {T}eichm\"{u}ller theory of harmonic maps},
   JOURNAL = {J. Differential Geom.},
  FJOURNAL = {Journal of Differential Geometry},
    VOLUME = {29},
      YEAR = {1989},
    NUMBER = {2},
     PAGES = {449--479},
      ISSN = {0022-040X},
   MRCLASS = {58E20 (32G15 58D17)},
  MRNUMBER = {982185},
MRREVIEWER = {A. J. Tromba},
       URL = {http://projecteuclid.org/euclid.jdg/1214442885},
}

@article{BeckHellerRoeserEnergy,
  author  = {Beck, Florian and Heller, Sebastian and R{\"o}ser, Markus},
  title   = {Energy of sections of the {Deligne--Hitchin} twistor space},
  journal = {Mathematische Annalen},
  volume  = {380},
  pages   = {1169--1214},
  year    = {2021},
  doi     = {10.1007/s00208-020-02042-0},
}

@article{WanAu1994,
author = {Wan, Tom Y.-H. and Au, T. K.-K.},
title = {Parabolic Constant Mean Curvature Spacelike Surfaces},
journal = {Proceedings of the American Mathematical Society},
volume = {120},
number = {2},
pages = {559--564},
year = {1994}
}

@article{Wan1992ConstantMC,
  author    = {Wan, Tom Yau-Heng},
  title     = {Constant mean curvature surface, harmonic maps, and universal {T}eichm{\"u}ller space},
  journal   = {Journal of Differential Geometry},
  year      = {1992},
  volume    = {35},
  number    = {3},
  pages     = {643--657},
  publisher = {Lehigh University},
  url       = {https://projecteuclid.org}
}

@article{LiVortexEquations,
author = {Li, Qiongling},
title = {On the Uniqueness of Vortex Equations and Its Geometric Applications},
journal = {The Journal of Geometric Analysis},
volume = {29},
number = {1},
pages = {105--120},
year = {2019},
doi = {10.1007/s12220-018-9981-x}
}

@article{SchulzNilpotent,
  author  = {Schulz, Sebastian},
  title   = {Nilpotent Higgs bundles and families of flat connections},
  journal = {Communications in Mathematical Physics},
  volume  = {403},
  number  = {2},
  pages   = {877--915},
  year    = {2023},
  doi     = {10.1007/s00220-023-04811-9},
}

@article{BiswasHellerRoeserReal,
  author  = {Biswas, Indranil and Heller, Sebastian and R{\"o}ser, Markus},
  title   = {Real holomorphic sections of the {Deligne--Hitchin} twistor space},
  journal = {Communications in Mathematical Physics},
  volume  = {366},
  number  = {3},
  pages   = {1099--1133},
  year    = {2019},
  doi     = {10.1007/s00220-019-03340-8},
}

@book {GuillPoll,
    AUTHOR = {Guillemin, Victor and Pollack, Alan},
     TITLE = {Differential topology},
      NOTE = {Reprint of the 1974 original},
 PUBLISHER = {AMS Chelsea Publishing, Providence, RI},
      YEAR = {2010},
     PAGES = {xviii+224},
      ISBN = {978-0-8218-5193-7},
   MRCLASS = {58-01 (57-01)},
  MRNUMBER = {2680546},
       DOI = {10.1090/chel/370},
       URL = {https://doi.org/10.1090/chel/370},
}

@misc{ElSa,
      title={Complex affine spheres and a Bers theorem for SL(3,C)}, 
      author={Christian El Emam and Nathaniel Sagman},
      year={2025},
      eprint={2406.15287},
      archivePrefix={arXiv},
      primaryClass={math.DG},
      url={https://arxiv.org/abs/2406.15287}, 
}

@article{ESholodependence,
  title={Holomorphic dependence for the Beltrami equation in Sobolev spaces},
  author={El Emam, Christian and Sagman, Nathaniel},
  journal={Proceedings of the American Mathematical Society},
  volume={154},
  number={5},
  pages={1973--1989},
  year={2026},
  doi={10.1090/proc/17402},
  url={https://doi.org}
}

@misc{ES_Chmaps,
      title={Complex harmonic maps and rank 2 higher Teichm\"uller theory}, 
      author={Christian El Emam and Nathaniel Sagman},
      year={2025},
      eprint={2506.11746},
      archivePrefix={arXiv},
      primaryClass={math.DG},
      url={https://arxiv.org/abs/2506.11746}, 
}

\end{document}